\newcommand{\blambda}{{\boldsymbol\lambda}}
\newcommand{\bmu}{{\boldsymbol\mu}}
\newcommand{\CC}{{\mathbb C}}
\newcommand{\cA}{{\mathscr A}}
\newcommand{\cD}{{\mathscr D}}
\newcommand{\cE}{{\mathscr E}}
\newcommand{\cF}{{\mathscr F}}
\newcommand{\cG}{{\mathscr G}}
\newcommand{\cH}{{\mathscr H}}
\newcommand{\cL}{{\mathscr L}}
\newcommand{\cM}{{\mathscr M}}
\newcommand{\cO}{{\mathscr O}}
\newcommand{\cP}{{\mathscr P}}
\newcommand{\cS}{{\mathscr S}}
\newcommand{\cX}{{\mathscr X}}
\newcommand{\dd}{{\textbf d}}
\newcommand{\dra}{\dashrightarrow}
\newcommand{\gh}{\mathfrak{h}}
\newcommand{\gM}{\mathfrak{M}}
\newcommand{\hra}{\hookrightarrow}
\newcommand{\la}{\langle}
\newcommand{\lra}{\longrightarrow}
\newcommand{\ov}{\overline}
\newcommand{\PP}{{\mathbb P}}
\newcommand{\QQ}{{\mathbb Q}}
\newcommand{\ra}{\rangle}
\newcommand{\RR}{{\mathbb R}}
\newcommand{\sF}{{\mathsf F}}
\newcommand{\sM}{{\mathsf M}}
\newcommand{\wh}{\widehat}
\newcommand{\wt}{\widetilde}
\newcommand{\ZZ}{{\mathbb Z}}
\theoremstyle{plain}
\newtheorem{thm}{Theorem}[section]
\newtheorem*{thm*}{Theorem}
\newtheorem{crl}[thm]{Corollary}
\newtheorem*{hyp*}{Hypothesis}
\newtheorem{lmm}[thm]{Lemma}
\newtheorem{prp}[thm]{Proposition}
\newtheorem{prp-dfn}[thm]{Proposition-Definition}
\theoremstyle{definition}
\newtheorem{dfn}[thm]{Definition}
\theoremstyle{remark}
\newtheorem{expl}[thm]{Example}
\newtheorem*{qst*}{Main Question}
\newtheorem{rmk}[thm]{Remark}
\DeclareMathOperator{\alb}{alb}
\DeclareMathOperator{\Alb}{Alb}
\DeclareMathOperator{\Aut}{Aut}
\DeclareMathOperator{\Blow}{Bl}
\DeclareMathOperator{\ch}{ch}
\DeclareMathOperator{\CH}{CH}
\DeclareMathOperator{\cl}{cl}
\DeclareMathOperator{\divisore}{div}
\DeclareMathOperator{\GL}{GL}
\DeclareMathOperator{\Id}{Id}
\DeclareMathOperator{\im}{Im}
\DeclareMathOperator{\Kum}{Kum}
\DeclareMathOperator{\Nm}{Nm}
\DeclareMathOperator{\NS}{NS}
\DeclareMathOperator{\OG}{OG6}
\DeclareMathOperator{\OGten}{OG10}
\DeclareMathOperator{\ord}{ord}
\DeclareMathOperator{\Ort}{O}
\DeclareMathOperator{\Pic}{Pic}
\DeclareMathOperator{\Prym}{Pr}
\DeclareMathOperator{\Supp}{Supp}
\DeclareMathOperator{\Trans}{T}
\DeclareMathOperator{\WIT}{WIT}
\begin{document}
\title{Theta groups and projective models of  hyperk\"ahler varieties}
\author{Kieran G. O'Grady}
\address{Dipartimento di Matematica Guido Castelnuovo, Sapienza Universit\`a di Roma, Piazzale A.~Moro 5, 00185 Roma, Italia}
\email{ogrady@mat.uniroma1.it}
\date{\today}
\thanks{Partially supported by PRIN 2017YRA3LK}
\dedicatory{Dedicato alla memoria di Alberto Collino}
\begin{abstract}
We define the theta  group associated to a simple coherent sheaf $\cF$ on a hyperk\"ahler manifold $X$ of Kummer type or OG6 type, provided $g^{*}(\cF)$ is isomorphic to $\cF$ for every automorphism $g$ of $X$ acting trivially on $H^2(X)$. Note that this condition is satisfied if $\cF$ is invertible, if $\cF$  is one of the rank $4$ stable vector bundles on general polarized HK fourfolds  with certain discrete invariants constructed
 in~\cite{og:modonkum}, or if $\cF$ is the tangent bundle.  We compute the commutator pairings of  theta groups of  line bundles and the rank $4$ modular vector bundles of~\cite{og:modonkum} (the commutator pairing of the tangent bundle is trivial).   We have been  motivated by the quest for an explicit description of locally complete families of polarized varieties of Kummer (or OG6) type.
\end{abstract}
  \maketitle
\bibliographystyle{amsalpha}
\section{Introduction}\label{sec:intro}
\setcounter{equation}{0}
\subsection{Background and motivation}
\setcounter{equation}{0}
Hyperk\"ahler (HK) manifolds are similar to compact complex tori   in many respects. The theta group of a line bundle  on an abelian variety  plays a key r\^ole in the analysis of projective models of such varieties, see~\cite{mum-eq-abvars}. In this paper we define and study an analogue of the theta group for HK manifolds of Kummer and OG6 type. Let  $X$ be 
a HK manifold. The normal subgroup $\Aut^0(X)\lhd\Aut(X)$   of automorphisms acting trivially on $H^2(X)$ depends only on  the deformation class of $X$, and it has been determined for the known deformation classes. If $X$ is of type 
$K3^{[n]}$ or of type OG10 then $\Aut^0(X)$  is trivial, 
if $X$ is  of type $\Kum_n$ then $\Aut^0(X)$ is  the semidirect product 
$\ZZ/(2)\ltimes (\ZZ/(n+1))^4$ where $\ZZ/(2)$ acts on  $\ZZ/(n+1)$ via multiplication by $-1$, and if  $X$ is of type OG6 then $\Aut^0(X)$ is $(\ZZ/2)^8$.
Our idea is to define the theta group by replacing the group of translations of a complex torus with the largest abelian subgroup    $T(X)<\Aut^0(X)$, i.e.(forgetting the deformation classes with trivial $\Aut^0$) $(\ZZ/(n+1))^4$ if $X$ is  of type $\Kum_n$, and  $(\ZZ/2)^8$  if  $X$ is of type OG6. 
Let $\cF$ be a (coherent) sheaf on $X$ such that
\begin{equation}\label{moreau}
g^{*}(\cF)\cong\cF\quad \forall g\in T(X).
\end{equation}
Note that~\eqref{moreau} holds if $\cF$ is an invertible sheaf, if $\cF$  is one of the rank $4$ stable vector bundles on general polarized HK fourfolds  with certain discrete invariants constructed
 in~\cite{og:modonkum}, or if $\cF$ is the tangent bundle.
One may mimic Mumford's definition of theta group $\cG(\cF)$ associated to $\cF$. In particular  
$\cG(\cF)$ is a $\CC^{*}$ extension of $T(X)$ and there is an associated commutator pairing  
 $T(X)\times T(X)\to \CC^{*}$.  If the commutator pairing is non degenerate the theta group is isomorphic to a Heisenberg group, and it is well-known that the representations of such groups are severely restricted (Stone - von Neumann). Since 
 $\cG(\cF)$ acts naturally on the space of sections of $\cF$, the upshot is that if  the commutator pairing is non degenerate then the representation space $H^0(X,\cF)$ is fully determined by its dimension.

The present paper has been motivated by the quest for an explicit description of locally complete families of polarized varieties of Kummer (or OG6) type. Because of the analogy with  abelian varieties, we believe that an understanding of theta groups will be instrumental in producing such families. More precisely, the philosophy that emerges from the present work is that if the commutator pairing of a  sheaf $\cF$ on a HK variety $X$ is non degenerate and the  space of global sections is non trivial (but not too big), then it should be possible to give an explicit description of the general deformation of the pair $(X,\cF)$.

The recent paper~\cite{floccari-kum3} contains results which are related to the ideas in the present work. 

\subsection{Main results}
\setcounter{equation}{0}
For the precise definition of the theta group and related notions we refer to Section~\ref{sec:gruppiteta}. Below are simplified versions of our main results on the theta group of a line bundle 
on a HK manifold of Kummer or OG6 type. More detailed  versions are given in Theorems~\ref{thm:compairkum} and~\ref{thm:compairog6}. Before stating the results we recall that if $X$ is a HK manifold and $\alpha\in H^2(X;\ZZ)$, then the \emph{divisibility of $\alpha$} is given by the non negative generator of the ideal 
$\{q_X(\alpha,\beta)\mid\beta\in H^2(X;\ZZ)\}$ (here $q_X$ is the Beauville-Bogomolov-Fuiki (BBF) symmetric bilinear form of $X$). We denote the divisibility of $\alpha$ by $\divisore(\alpha)$. If $L$ is a line bundle on $X$ we let 
$\divisore(L)=\divisore(c_1(L))$. 
\begin{thm}\label{thm:heisenkum}
Let $X$ be a $2n$ dimensional HK manifold of Kummer type, and let $L$ be a primitive line bundle on $X$. 
The theta group of  $L$ is a Heisenberg group if and only if the following two conditions are satisfied:
\begin{enumerate}
\item 
$\divisore(L)=2$ and $n$ is even, or $\divisore(L)=1$ (no restriction on $n$ in this case);
\item 
$\gcd\left\{n+1,\frac{q_X(L)}{2}\right\}=1$ (recall that  $q_X$ is even).
\end{enumerate}
\end{thm}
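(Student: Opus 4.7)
The plan is to reduce Theorem~\ref{thm:heisenkum} to an explicit computation on generalized Kummer varieties and then invoke Mumford's theta-group theory on the underlying abelian surface. Both conditions in the statement are topological (depending only on $\divisore(L)$ and $q_X(L)$), and the commutator pairing $T(X)\times T(X)\to\CC^{*}$ takes values in the finite cyclic group of $(n+1)$-th roots of unity; hence it is locally constant in polarized families. Since every connected component of the moduli of polarized HK manifolds of Kummer type of dimension $2n$ contains pairs $(K_n(A),L)$ realising every admissible pair of discrete invariants $(\divisore(L),q_X(L))$, one may assume $X=K_n(A)$.

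On $X=K_n(A)$ the group $T(X)$ is canonically identified with $A[n+1]$, acting by translation on the Hilbert scheme $A^{[n+1]}$ and preserving the fibres of the summation map $A^{[n+1]}\to A$. Writing $H^2(X;\ZZ)=H^2(A;\ZZ)\oplus\ZZ\delta$ with $q(\delta)=-2(n+1)$, one decomposes $c_1(L)=\mu+k\delta$. Since $H^2(A;\ZZ)$ is an even lattice, $q(L)=\mu^2-2(n+1)k^2$ is even and $q(L)/2=\mu^2/2-(n+1)k^2$ will play the role of the key integer invariant. The lifting of a translation $t_a$ (by $a\in A[n+1]$) to $L$ is governed by Mumford's theta group on $A$ attached to the class $\mu$, since the exceptional contribution $k\delta$ admits a canonical translation-equivariant structure.

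The central computation (to be formalised in Theorem~\ref{thm:compairkum}) is to show that $e_L$ factors through the Weil pairing on $A[n+1]$ with multiplier $q(L)/2\pmod{n+1}$. Granting this, non-degeneracy of $e_L$ on $(\ZZ/(n+1))^4$ is equivalent to that multiplier being coprime to $n+1$, which is condition~(2). Condition~(1) emerges from matching primitivity of $L$ with the divisibility formula $\divisore(L)=\gcd(\divisore_A(\mu),2(n+1))$: if $\divisore(L)=2$ then $\mu=2\mu_0$ with $\mu_0$ primitive in $H^2(A;\ZZ)$, so $q(L)/2=2\mu_0^2-(n+1)k^2$ is always even when $n+1$ is even; the gcd condition thus forces $n+1$ odd, i.e.\ $n$ even.

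The main obstacle is the explicit formula for $e_L$ in the third step. One must lift the $A[n+1]$-action to $L$, tracking the contributions of both $\mu$ and $k\delta$, and then compute the commutator inside the theta-group extension. This amounts to pushing Mumford's theta-group calculus from $A$ up through $A^{[n+1]}$ and restricting to $K_n(A)$; the $2$-adic subtleties concentrated in the exceptional direction account for the parity refinement appearing in condition~(1).
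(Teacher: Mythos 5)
Your overall strategy coincides with the paper's: deform to a generalized Kummer, split $c_1(L)=\mu+k\delta$, kill the $\delta$-direction by an equivariant structure, and compute the $\mu$-direction via Mumford's theta groups on the abelian surface. However, your central claim — that $e^L$ ``factors through the Weil pairing on $A[n+1]$ with multiplier $q(L)/2\pmod{n+1}$'' — is false as a description of the pairing. What is actually true (Proposition~\ref{prp:compatibili}) is that $e^L$ is the restriction to $A[n+1]$ of the commutator pairing of a line bundle on $A$ with class $(n+1)\mu$. If $(a_1,a_2)$ are the elementary divisors of $\mu$, this restriction decomposes $A[n+1]$ into two orthogonal hyperbolic planes on which the pairing is the standard symplectic form raised to the powers $a_1$ and $a_2$ \emph{separately}; it is not the single power $a_1a_2\equiv q(L)/2\pmod{n+1}$ of a fixed nondegenerate form (take $a_1=1$, $a_2=n+1$: the true pairing has cokernel $(\ZZ/(n+1))^2$, your model gives $(\ZZ/(n+1))^4$). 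You are rescued only by the arithmetic accident that $\gcd(n+1,a_1a_2)=1$ is equivalent to $\gcd(n+1,a_1)=\gcd(n+1,a_2)=1$, so the non-degeneracy criterion you extract is correct even though the asserted structure of $e^L$ is not; as written, your argument would give the wrong answer for the refined statement (Theorem~\ref{thm:compairkum}) that you say you are formalising. You also do not indicate how to prove the comparison with the theta group of $(n+1)\mu$ on $A$; the paper does this by degenerating $A$ to a product of elliptic curves and reading the pairing off the Schr\"odinger representation on $H^0$ of a Lagrangian line bundle.

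Two further steps are asserted rather than proved. First, the ``canonical translation-equivariant structure'' on the $k\delta$ contribution is not automatic: $\delta$ is \emph{half} the class of the exceptional divisor $\Delta_n(A)$, so for odd $k$ one must lift the $A[n+1]$-action to a square root of $\cO(\Delta_n(A))$, which the paper does via the double cover of $K_n(A)$ branched over $\Delta_n(A)$ and the isospectral Hilbert scheme (Lemma~\ref{lmm:nonconta}). Second, the reduction to $X=K_n(A)$ with prescribed invariants is not a formality: one needs the global Torelli theorem together with the classification of monodromy orbits of the classes $\delta_n$ (Proposition~\ref{prp:defpol} and Lemma~\ref{lmm:ixzero}), and since the deformation only produces a variety \emph{birational} to $K_n(A)$ one must also invoke birational invariance of the commutator pairing (Remark~\ref{rmk:stessocomm}). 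Your derivation of the parity constraint in condition~(1) from condition~(2) is fine.
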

\begin{thm}\label{thm:heisenog6}
Let $X$ be a HK manifold of type $\OG$, and let $L$ be a primitive line bundle on $X$.   
The theta group of  $L$ is a Heisenberg group if and only if
 $\divisore(L)=1$ and $q_X(L)$ is not divisible by $4$.
\end{thm}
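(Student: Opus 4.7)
The plan is to derive Theorem~\ref{thm:heisenog6} from the more detailed companion statement Theorem~\ref{thm:compairog6}, which (as announced in the introduction) describes the commutator pairing $e_L\colon T(X)\times T(X)\to \CC^{*}$ of a line bundle explicitly on $T(X)\cong (\ZZ/2)^8$. Since $T(X)$ is elementary abelian of exponent $2$, the pairing is automatically $\mu_2$-valued and may be regarded as an alternating bilinear form on the $\FF_2$-vector space $T(X)$. By the general Heisenberg group theory recalled in Section~\ref{sec:gruppiteta}, the theta group $\cG(L)$ is a Heisenberg group precisely when this form is non-degenerate, i.e.\ symplectic on an $8$-dimensional $\FF_2$-space.

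First I would unpack Theorem~\ref{thm:compairog6} to get a concrete expression $e_L(g,h)=(-1)^{\beta_L(g,h)}$, where $\beta_L$ is an alternating $\FF_2$-valued form whose definition involves the class $\ov c_1(L)\in H^2(X;\ZZ)/2H^2(X;\ZZ)$ together with a scalar refinement coming from $q_X(L)/2 \bmod 2$. The expectation is that $\beta_L$ decomposes as a ``linear'' part depending only on $\ov c_1(L)$ paired against an intrinsic symplectic form on $T(X)$, plus a ``quadratic'' contribution recording the parity of $q_X(L)/2$; such a splitting would parallel what happens for theta characteristics on abelian varieties, where a mod~$4$ invariant of $q_X(L)$ naturally appears because one is taking a commutator of order-$2$ lifts.

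Next I would check that the two conditions in the statement exactly cut out the non-degeneracy locus of $\beta_L$. If $\divisore(L)=2$, then $c_1(L)\in 2 H^2(X;\ZZ)^{\vee}$, so the linear part of $\beta_L$ is trivial over $\FF_2$, and the remaining quadratic piece alone cannot be symplectic of rank $8$; hence $\beta_L$ is degenerate. If instead $4\mid q_X(L)$, the quadratic refinement vanishes modulo $2$, and one should exhibit an explicit vector in the radical of the resulting form. Conversely, assuming $\divisore(L)=1$ and $q_X(L)\equiv 2\pmod{4}$, a rank computation on an explicit $8\times 8$ matrix over $\FF_2$ should show that $\beta_L$ is symplectic, using that a class in $H^2(X;\ZZ)$ of divisibility $1$ pairs non-trivially against every non-zero element mod~$2$.

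The main obstacle is the first step: producing a clean formula for $\beta_L$. This requires identifying $T(X)$ with a lattice-theoretic object on which $H^2(X;\ZZ)$ acts in a controllable way, and tracking how each involution $g\in T(X)$ lifts to act on $L$. I expect this is accomplished via the birational model of an OG6 manifold as a symplectic desingularization of a moduli space of sheaves on an abelian surface $A$, so that $T(X)$ becomes canonically identified with $A[2]\oplus \wh A[2]$ carrying its Weil symplectic pairing, and the commutator pairing of $L$ is computed through the Mukai homomorphism relating $H^2(X;\ZZ)$ to the Mukai lattice of $A$.
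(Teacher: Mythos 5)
Your top-level reduction is exactly the paper's: by Proposition~\ref{prp:senondeg} (together with the elementary converse) the theta group is a Heisenberg group precisely when $e^L$ is non-degenerate on $T(X)\cong(\ZZ/2)^8$, so Theorem~\ref{thm:heisenog6} is an immediate corollary of Theorem~\ref{thm:compairog6} -- and the paper's proof of Theorem~\ref{thm:heisenog6} is literally that one line. The difficulty is that most of your proposal is a sketch of a proof of Theorem~\ref{thm:compairog6} itself, and that sketch has real gaps. Your ansatz that $\beta_L$ splits into a ``linear part'' in $c_1(L)\bmod 2$ against an intrinsic symplectic form plus a ``quadratic refinement'' recording $q_X(L)/2\bmod 2$ is neither established nor what actually happens. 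What is true for formal reasons (multiplicativity of $e^L$ in $L$, plus the fact that $T(X)$ has exponent $2$) is that $e^L$ depends only on $c_1(L)$ modulo $2H^2(X;\ZZ)$, additively in that class; but the resulting homomorphism from $H^2(X;\ZZ)/2$ to alternating forms on $T(X)$ is precisely what must be computed, and you give no mechanism for computing it. Your divisibility-$2$ argument in particular fails: $\divisore(L)=2$ means $q_X(c_1(L),-)$ is even, i.e.\ $c_1(L)/2$ lies in the \emph{dual} lattice, not that $c_1(L)\in 2H^2(X;\ZZ)$ (the class is primitive), so nothing forces your ``linear part'' to vanish. In the paper the triviality of $e^L$ in this case needs two specific geometric inputs on the model $\wt{K}_{(0,2h,-2)}(J)$: the class $\beta_v-\alpha_v$ is represented by a rigid effective divisor, so the corresponding line bundle has $h^0=1$ and hence trivial pairing (Proposition~\ref{prp:ellevu}); and $-\alpha_v$ is the first Chern class of the $(-1)$-eigensheaf of a double cover $\wt{Y}_v(J)\to\wt{K}_v(J)$ to which the whole translation group is lifted (Proposition~\ref{prp:sollevo}). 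Neither ingredient appears in your sketch, and without them half of the ``only if'' direction is unproved.

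For $\divisore(L)=1$ your instinct to identify $T(X)$ with $J[2]\times\wh{J}[2]$ on a moduli-space model is right, but the non-degeneracy is not an abstract $8\times 8$ rank computation: the paper computes $e^L$ for the two generators $\wt{\Lambda}_v$ and $\cl(\wt{\Theta}_v)$ on $\wt{K}_{(0,2h,0)}(J)$ by identifying $H^0$ of the Lagrangian line bundle $\pi_v^{*}\cO_{\PP^3}(1)$ with $H^0(J,\cO_J(2\Theta_J))^{\vee}$, on which the theta group acts by the Schr\"odinger representation (non-degenerate on $J[2]$, kernel $\wh{J}[2]$), and then applies a Fourier--Mukai transform to swap the two generators and obtain the mirror statement; $e^L$ for $c_1(L)=a\wt{\Lambda}_v+b\cl(\wt{\Theta}_v)$ is then non-degenerate iff $a$ and $b$ are both odd, i.e.\ iff $4\nmid q_X(L)=2ab$. (Your assertion that a divisibility-$1$ class ``pairs non-trivially against every non-zero element mod $2$'' is also false as stated.) Finally, reducing an arbitrary pair $(X,L)$ to these two explicit models requires deforming $X$ while keeping $L$ holomorphic and normalizing $c_1(L)$, which rests on Verbitsky's Torelli theorem, the Mongardi--Rapagnetta monodromy computation and Eichler's criterion (Lemmas~\ref{lmm:alice} and~\ref{lmm:circeo}); this step is missing from your proposal.
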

Let  $e$ be a positive integer such that $e\equiv -6 \pmod{16}$, and  
let $(M,h)$ be a general polarized HK fourfold of Kummer type such that $q_M(h)=e$ and the divisibility of $h$ is $2$. 
In~\cite{og:modonkum} we have shown that there exists a slope stable rank $4$ vector bundle $\cF$ on $M$ such that
\begin{equation}
\det\cF\cong\cO_M(h),\quad \Delta(\cF):=8c_2(\cF)-3c_1(\cF)^2=c_2(M).
\end{equation}
We have also proved that $g^{*}(\cF)\cong\cF$ for every $g\in\Aut^0(X)$, and
hence the theta group $\cG(\cF)$ is defined.
\begin{thm}\label{thm:heisenrg4}
Keeping notation as above, the theta group of  $\cF$ is a Heisenberg group if and only if
 $q_M(h)$ is not divisible by $3$.
\end{thm}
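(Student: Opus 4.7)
The plan is to reduce the statement to the line bundle case, Theorem~\ref{thm:heisenkum}, via the determinant. Since $M$ is a HK fourfold of Kummer type, one has $n=2$ and $T(M)\cong (\ZZ/3)^{4}$. As the commutator pairing $c_{\cF}\colon T(M)\times T(M)\to\CC^{*}$ is bimultiplicative and $T(M)$ has exponent $3$, it takes values in the group $\mu_{3}$ of cube roots of unity.

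First I would construct a determinant morphism of theta groups $\det\colon \cG(\cF)\to\cG(\det\cF)$. Since $\cF$ is slope stable, it is simple; hence for each $g\in T(M)$ an isomorphism $g^{*}\cF\to\cF$ is determined up to a scalar, and taking the determinant of such a lift defines a homomorphism covering the identity on $T(M)$, whose restriction to the central $\CC^{*}$ is $\lambda\mapsto\lambda^{r}$ with $r=\rk\cF=4$. Passing to commutators yields
\begin{equation*}
c_{\det\cF}(g_{1},g_{2})=c_{\cF}(g_{1},g_{2})^{4}
\end{equation*}
for all $g_{1},g_{2}\in T(M)$. Because $c_{\cF}$ takes values in $\mu_{3}$ and $4\equiv 1\pmod{3}$, this simplifies to $c_{\det\cF}=c_{\cF}$. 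In particular the two pairings have the same kernel, so $\cG(\cF)$ is Heisenberg if and only if $\cG(\det\cF)=\cG(\cO_{M}(h))$ is Heisenberg.

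Next I would apply Theorem~\ref{thm:heisenkum} to the primitive line bundle $L=\cO_{M}(h)$. Condition (1) holds by hypothesis, since $n=2$ is even and $\divisore(h)=2$. For condition (2), note that $q_{M}(h)=e\equiv -6\equiv 10\pmod{16}$ forces $q_{M}(h)\equiv 2\pmod{4}$, so $q_{M}(h)/2$ is odd; hence $\gcd\bigl(3,q_{M}(h)/2\bigr)=1$ if and only if $3\nmid q_{M}(h)$. Combined with the previous step, this proves the theorem.

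The main obstacle is purely setup: verifying that $\det\colon \cG(\cF)\to\cG(\det\cF)$ is a well-defined group homomorphism behaving as claimed on the center (this relies on Schur's lemma applied to the simple sheaf $\cF$, so that both the composition law on $\cG(\cF)$ and the determinant are unambiguous). Once this is in place, the result is forced by the special arithmetic of the Kummer fourfold, namely the coprimality of the rank $r=4$ with the exponent $3$ of $T(M)$: this collapses the $r$-th power map on $\mu_{3}$ to the identity, so that the commutator pairings of $\cF$ and of $\det\cF$ literally coincide, and the nondegeneracy criterion is inherited verbatim from the line bundle case.
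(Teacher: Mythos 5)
Your proof is correct, and it takes a genuinely different route from the paper's. The paper proves the finer statement (Theorem~\ref{thm:giulia}) by deforming to $K_2(A)$ and exploiting the explicit construction of $\cF$ as a push-forward $\wt{\rho}_{*}(\nu^{*}\ov{\cL})$ from the degree-two cover $K_2(B)\dra K_2(A)$: the induced identification $H^0(K_2(B),\ov{\cL})\cong H^0(K_2(A),\cF)$ is equivariant for $T(K_2(B))=B[3]\cong A[3]=T(K_2(A))$, so the commutator pairing of $\cF$ is read off from this nonzero space of sections (Remark~\ref{rmk:unasez}) and computed by applying Theorem~\ref{thm:compairkum} to $\ov{\cL}$ on $K_2(B)$, where $q(\ov{\cL})=2a$ and $e=16a-6$, so that $3\mid e$ iff $3\mid a$. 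Your determinant map $\cG(\cF)\to\cG(\det\cF)$, $(g,\phi)\mapsto(g,\det\phi)$, is indeed a well-defined homomorphism of central extensions covering the identity on $T(M)$ (functoriality of $\bw{4}$ under pull-back gives $\det(g_2^{*}(\phi_1)\circ\phi_2)=g_2^{*}(\det\phi_1)\circ\det\phi_2$, compatibly with~\eqref{prodinth}), it acts on the centre by $\lambda\mapsto\lambda^{4}$, and hence $e^{\det\cF}=(e^{\cF})^{4}=e^{\cF}$ since the pairing is bimultiplicative on $T(M)\cong(\ZZ/(3))^4$ and $4\equiv 1\pmod 3$; your arithmetic check that $\gcd\{3,e/2\}=1$ iff $3\nmid e$ (using $e\equiv 2\pmod 4$, so $e/2$ is odd) is also right, so Theorem~\ref{thm:heisenkum} applies verbatim to $\cO_M(h)$. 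What your argument buys is economy and generality: it uses nothing about $\cF$ beyond its rank, its determinant and its simplicity, and it isolates the general identity $e^{\det\cF}=(e^{\cF})^{\rk\cF}$, which reduces the pairing of any simple bundle to that of its determinant whenever $\gcd(\rk\cF,n+1)=1$. What the paper's route buys is the explicit identification of the $\cG(\cF)$-module $H^0(M,\cF)$ with $H^0(K_2(B),\ov{\cL})$ (hence its dimension~\eqref{vittoria} and its structure as a sum of Schr\"odinger representations), which is the datum actually needed for the projective-models programme described in the introduction.
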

Lastly, we remark that the commutator pairing of the tangent bundle of a HK manifold of Kummer type or of type OG6 is trivial, see Example~\ref{expl:tancom}.
\subsection{Outline of the paper}
\setcounter{equation}{0}
In Section~\ref{sec:gruppiteta} we give the details of the definition of the theta group $\cG(\cF)$, we recall the definition of the Heisenberg representation, and we discuss the representation of $\cG(\cF)$ on the space of global sections of $\cF$.

Section~\ref{sec:compkum} is devoted to the computation of the commutator pairing of line bundles on HK manifolds of Kummer type. The main result  is Theorem~\ref{thm:compairkum}, which is a more precise version of Theorem~\ref{thm:heisenkum}.  The proof goes roughly as follows. One may reduce to the case of a generalized Kummer because the commutator pairing is  invariant under deformation and  under birational maps. For line bundles on  a generalized Kummer $K_n(A)$ associated to an abelian surface $A$ one has to treat two cases: a line bundle \lq\lq coming \rq\rq\ from $A$, and the  square root of the line bundle associated to the divisor $\Delta_n(A)$ parametrizing non reduced subschemes of $A$. If a line bundle \lq\lq comes\rq\rq\ from $A$, i.e.~it is equal to $\mu_n(\ell)$ where $\ell$  is a line bundle on $A$ (see Subsection~\ref{subsec:recaponkumm}  for the definition of the map 
$\mu_n\colon H^2(A)\to H^2(K_n(A))$ - since $K_n(A)$ is regular  we denote by the same symbol the isomorphism class of a line bundle and its first Chern class), one may deform $A$ to a product 
$C_1\times C_2$ of elliptic curves so that $\ell=\cO_{C_1}(D_1)\boxtimes\cO_{C_2}(D_2)$. It follows that it suffices to compute the commutator pairing of $\mu_n(\cO_{C_i}(D_i))$ for $i\in\{1,2\}$.
There are two lagrangian fibrations $\pi_i\colon K_n(C_1\times C_2)\to\PP^n$, and  $\mu_n(\cO_{C_i}(D_i)$ is a multiple of $\pi_i^{*}(\cO_{\PP^n}(1))$. Thus we are reduced to computing the commutator pairing of $\pi_i^{*}(\cO_{\PP^n}(1))$. The space of global sections of the latter is identified with the space of global sections of $\cO_{C_i}((n+1)p_i)$, where $p_i\in C_i$ is the zero of the addition law, and  hence is the Heisenberg representation of the theta group of the line bundle  
$\cO_{C_i}((n+1)p_i)$ on the elliptic curve $C_i$.
From this one gets the commutator pairing of $\pi_i^{*}(\cO_{\PP^n}(1))$.

Lastly, if $L^{\otimes 2}\cong\cO_{K_n(A)}(\Delta_n(A))$, one proves that the commutator pairing of $L$ is trivial by lifting the action of $T(K_n(A))$ on $K_n(A)$ to an action on the double cover  $\wt{K}_n(A)$ of $K_n(A)$ ramified over 
$\Delta_n(A)$.

In Section~\ref{sec:contocomm} we compute  the commutator pairing of line bundles on HK manifolds of type OG6. The main result  is Theorem~\ref{thm:compairog6}, which is a more precise version of Theorem~\ref{thm:heisenog6}.  The proof is a more intricate version of the proof of the main result of Section~\ref{sec:compkum}. 
By deformation it suffices  to compute  the commutator pairing of line bundles on two models of HK varieties of type OG6 which have been previously studied, namely the symplectic desingularizations 
 $\wt{K}_v(J)$ of an Albanese fiber of the moduli spaces of sheaves on a $2$-dimensional Jacobian $J$ with Mukai vectors $v=(0,2h,-2)$ and $v=(0,2h,0)$, where $h$ is the principal polarization of $J$. As in the previous case, the non-trivial contributions to the commutator pairing come from spaces of global sections of \lq\lq lagrangian line bundles\rq\rq.

Section~\ref{sec:rango4} contains the proof of Theorem~\ref{thm:heisenrg4}, which is easy once one has Theorem~\ref{thm:heisenkum}. 
\subsection{Acknowledgments}
I would like to thank Bert van Geemen for the interest he took in the arguments of this paper. Thanks go to the referee for a very careful reading of the paper, in particular for pointing out a computational blunder.

\section{Theta groups}\label{sec:gruppiteta}
\subsection{Automorphisms of very general HK manifolds}\label{subsec:autoverygen}
\setcounter{equation}{0}
Let $X$ be a HK manifold. Let $\Aut^0(X)<\Aut(X)$ be the normal subgroup of automorphisms acting trivially on $H^2(X)$. If $Y$ is a HK manifold deformation equivalent to $X$ then $\Aut^0(Y)$ is isomorphic to $\Aut^0(X)$, see  Theorem 2.1 in~\cite{hass-tschink-lag-planes}. 

The  HK manifold $X$ is  of \emph{type $K3^{[n]}$} if  it is a deformation of the Hilbert scheme (or Douady space) parametrizing length $n$ subschemes of a $K3$ surface,  it is of
\emph{type $Kum_n$} (here $n\ge 2$) if it is a deformation of the generalized Kummer manifold $K_n(A)\subset A^{[n+1]}$, where $A$ is a compact complex torus of dimension $2$, see~\cite{beaucy}. Lastly $X$ is of \emph{type $\OGten$} or of \emph{type $\OG$} if it is a deformation of  the symplectic desingularization of the $10$ dimensional moduli space of semistable sheaves on a $K3$ surface constructed in~\cite{og10}, respectively the Albanese fiber of the symplectic desingularization of the $10$ dimensional moduli space of semistable sheaves on an abelian surface constructed in~\cite{og6}.
\begin{expl}\label{expl:eccaut}
\begin{enumerate}
\item
If $X$ is of type $K3^{[n]}$ or OG10, then $\Aut^0(X)$ is trivial, see Proposition 10 in~\cite{beau-katata} and Theorem 3.1 in~\cite{mon-wand} respectively. 
\item
 Let $A$ be a compact complex torus of dimension $2$, and let $G_A$ be the subgroup of the group of automorphisms of $A$ (as complex manifold, we forget the group law) generated by multiplication by $(-1)$ and translations $x\mapsto x+\tau$ where $\tau\in A[n+1]$. If $g\in G_A$ and $[Z]\in K_n(A)$ then $g(Z)$ is a subscheme parametrized by $K_n(A)$, and hence we get an inclusion $G_A<\Aut(K_n(A))$. In fact $G_A=\Aut^0(K_n(A))$, see Corollary 5 in~\cite{boiss-high-enriques}. Note that $G_A$ is isomorphic to the semidirect product $\ZZ/(2)\ltimes \ZZ/(n+1)^4$ where $\ZZ/(2)$ acts on  $\ZZ/(n+1)^4$ via multiplication by $-1$.
\item
If $X$ is of type  OG6, then  $\Aut^0(X)$ is  isomorphic to $(\ZZ/(2))^8$, see Theorem 5.2 in~\cite{mon-wand}. 
\end{enumerate}
\end{expl}
\begin{prp}
Let $X$ be a HK manifold of type $\Kum_n$ for $n\ge 2$. There is a unique abelian subgroup of $\Aut^0(X)$ of index $2$, and it is isomorphic to $(\ZZ/(n+1))^4$.  
\end{prp}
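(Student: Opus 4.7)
The plan is to reduce to the model case $X = K_n(A)$ for some $2$-dimensional complex torus $A$, and then carry out an elementary computation inside the semidirect product
\[
\Aut^0(K_n(A)) \;\cong\; A[n+1]\rtimes \ZZ/(2),
\]
where $\ZZ/(2)$ acts on $A[n+1]\cong (\ZZ/(n+1))^4$ by $(-1)$. The reduction step is free: by Theorem~2.1 of~\cite{hass-tschink-lag-planes}, already invoked above, $\Aut^0(X)$ depends only on the deformation type of $X$, and by Example~\ref{expl:eccaut}(2) (read with the correction $(\ZZ/(n+1))^4$ in place of $\ZZ/(n+1)$ in accordance with the introduction) the group in question has the displayed form. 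Write $T := A[n+1]$, viewed as the normal subgroup of translations, and $\iota$ for the involution $(-1)$.

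Existence is immediate: $T$ is abelian, isomorphic to $(\ZZ/(n+1))^4$, and has index $2$ in $T\rtimes\langle\iota\rangle$. For uniqueness, let $H\leq T\rtimes\langle\iota\rangle$ be any abelian subgroup of index $2$, so $|H|=(n+1)^4$. Denote a general element of the non-trivial coset by $(t,\iota)$, with multiplication rule
\[
(t_1,\iota)(t_2,\iota) \;=\; (t_1-t_2,\,1),\qquad (t,\iota)(s,1)\;=\;(t-s,\iota),\qquad (s,1)(t,\iota)\;=\;(s+t,\iota).
\]
I would argue by cases. If $H\subseteq T$, then $H=T$ by cardinality, and we are done. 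Otherwise $H$ contains some $(t_0,\iota)$.

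In the second case I would extract two constraints from abelianness. First, the commutation $(t_0,\iota)(s,1)=(s,1)(t_0,\iota)$ forces $2s=0$, so $H\cap T \subseteq T[2]$, a group of order at most $\gcd(2,n+1)^4\leq 16$. Second, since $H$ is a union of cosets of $H\cap T$, we have $|H|=2|H\cap T|\leq 32$. But $|H|=(n+1)^4\geq 3^4=81$ as soon as $n\geq 2$, a contradiction. Thus the second case never occurs, and $T$ is the unique abelian subgroup of index $2$. The only conceptual point to watch is the correct form of $\Aut^0(K_n(A))$ (with the $4$-th power, as in the introduction); the remainder is a direct semidirect-product calculation, so I do not expect any serious obstacle.
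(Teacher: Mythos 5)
Your proof is correct and follows essentially the same route as the paper: reduce to $K_n(A)$, use that $\Aut^0(K_n(A))\cong A[n+1]\rtimes\ZZ/(2)$ with $\ZZ/(2)$ acting by $-1$, and show that any abelian $H$ of index $2$ not equal to $A[n+1]$ would force $H\cap A[n+1]\subset A[2]$ via the commutation relation, which is impossible for $n\ge 2$ by a cardinality count. Your explicit bound $|H|\le 32<(n+1)^4$ is just a rephrasing of the paper's observation that $A[n+1]\cap A[2]$ has index greater than $2$ in $A[n+1]$, and you correctly read Example~\ref{expl:eccaut}(2) with the fourth power.
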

\begin{proof}
We may assume that $X=K_n(A)$. Then $\Aut^0(K_n(A))$ is isomorphic to the group $G_A<\Aut(A)$ described above. The normal  subgroup  $A[n+1]\lhd G_A$ is abelian of index $2$, and is isomorphic to $(\ZZ/(n+1))^4$. Suppose that $H< G_A$ is a different abelian subgroup of  index $2$. Then $H\cap A[n+1]$ has index $2$ in $A[n+1]$. Let  $t\in H\cap  A[n+1]$ and let  $g\in(H\setminus A[n+1])$; the equality $gt=tg$ gives that $2t=0$. Hence $H\cap A[n+1]\subset A[2]$, and this is a contradiction 
because $ A[n+1]\cap A[2]$ has index greater than $2$ in $ A[n+1]$.
\end{proof}
\begin{dfn}
If $X$ is a HK manifold of type $\Kum_n$ we let  $\Trans(X)\lhd  \Aut^0(X)$ be the unique abelian subgroup   of index $2$. If $X$ is of type  OG6, we let  $\Trans(X):=\Aut^0(X)$. The elements of   $\Trans(X)$ are the \emph{translations of $X$}.
\end{dfn}
\begin{expl}\label{expl:transkum}
 Let $A$ be a compact complex torus of dimension $2$. By Example~\ref{expl:eccaut} we have a natural identification 
 $\Trans(K_n(A))=A[n+1]$. 
\end{expl}
\subsection{Theta groups for HK manifolds of Kummer or OG6 type}\label{subsec:tetadifib}
\setcounter{equation}{0}
In the present subsection $X$ is a HK manifold of Kummer or OG6 type. Following Mumford~\cite{mum-eq-abvars} we define the theta group of 
 a simple sheaf  $\cF$ on $X$ under the assumption that
\begin{equation}\label{keyassumption}
g^{*}(\cF)\cong\cF\quad \forall g\in \Trans(X).
\end{equation}
\begin{dfn}
Let $\cF$ be a simple sheaf  on $X$ such that \eqref{keyassumption} holds. The \emph{theta group} $\cG(\cF)$ 
is  the set of couples $(g,\phi)$ where $g\in\Trans(X)$ and $\phi\colon \cF\overset{\sim}{\lra}g^{*}(\cF)$ is an isomorphism. The product is defined by 
\begin{equation}\label{prodinth}
(g_1,\phi_1)\cdot (g_2,\phi_2):=(g_1 g_2,g_2^{*}(\phi_1)\circ \phi_2).
\end{equation}
\end{dfn}
\begin{expl}
If $L$ is a line bundle, then $g^{*}(L)\cong L$ for all $g\in\Aut^0(X)$. Since $L$ is simple,  the theta group $\cG(L)$ is defined.    
\end{expl}
\begin{expl}
In~\cite{og:modonkum} we have constructed rank $4$ slope stable vector bundles $\cF$ on a generic polarized HK fourfold $(X,h)$ with $\divisore(h)=2$ and $q_X(h)\equiv -6\pmod{16}$ or  
$\divisore(h)=6$ and $q_X(h)\equiv -6\pmod{144}$ such that $c_1(\cF)=h$ and $\Delta(\cF)=c_2(X)$, where 
$\Delta(\cF)=8c_2(X)-3 c_1(X)^2$ is the discriminant of $\cF$. We have proved that for such vector bundles $g^{*}(\cF)\cong\cF$ for all $g\in\Aut^0(X)$  (op.cit.). Hence  the theta group $\cG(\cF)$ is defined.    
\end{expl}
\begin{expl}
The tangent 
bundle $\Theta_X$  is stable with respect to any K\"ahler metric, in particular it is simple. Since $g^{*}(\Theta_X)\cong\Theta_X$ for all $g\in \Aut^0(X)$, the theta group $\cG(\Theta_X)$ is defined. 
\end{expl}

The homomorphism $\cG(\cF)\to  \Trans(X)$ defined by $(g,\phi)\mapsto g$ 
gives an exact sequence of  groups
\begin{equation}\label{thetahk}
1\lra \CC^{*}\lra \cG(\cF)\lra  \Trans(X)\lra 1.
\end{equation}
The above exact sequence gives rise to the \emph{commutator pairing}
\begin{equation}\label{commpair}
\begin{matrix}
\Trans(X)\times  \Trans(X) & \overset{e^{\cF}}{\lra} & \CC^{*}. \\
 (\alpha,\beta) & \mapsto & \wt{\alpha}\cdot\wt{\beta}\cdot \wt{\alpha}^{-1}\cdot\wt{\beta}^{-1}
\end{matrix}
\end{equation}
where $\wt{\alpha},\wt{\beta}\in \cG(\cF)$ are lifts of 
 $\alpha,\beta\in  \Trans(X)$ respectively. (Note: it is here that we want $\Trans(X)$ to be abelian). For fixed $\beta\in  \Trans(X)$ the maps $\Trans(X)\to\CC^{*}$ defined by 
 $\alpha\mapsto e^{\cF}(\alpha,\beta)$ and 
 $\alpha\mapsto e^{\cF}(\beta,\alpha)$ are characters, and moreover 
 $e^{\cF}$ is skew symmetric, i.e.~$e^{\cF}(\alpha,\alpha)=1$. 
 The commutator pairing defines a homomorphism $E^{\cF}\colon \Trans(X)\to \wh{\Trans}(X)$, where $\wh{\Trans}(X)$ is the group of characters of $\Trans(X)$, by setting $E^{\cF}(\alpha)(\beta):=e^{\cF}(\alpha,\beta)$. The  commutator pairing is \emph{non degenerate} if $E^{\cF}$ is an isomorphism. Below we collect a few observations regarding the commutator pairing.
\begin{rmk}\label{rmk:solltriv}
Let $H< T(X)$ be a subgoup. The action of $H$ on $X$ lifts to an action on $\cF$ if and only if the restriction to $H$ of the commutator pairing is trivial, see p.~293 in~\cite{mum-eq-abvars}.
\end{rmk}
\begin{expl}\label{expl:tancom}
 The commutator pairing of $\cG(\Theta_X)$  is trivial because the action of $T(X)$ on $X$ lifts, via the differential, to an action on $\Theta_X$. 
\end{expl}
\begin{rmk}\label{rmk:autbir}
Let $f\colon Y\dra X$  be a birational (i.e.~bimeromorphic) map between HK manifolds. If $\varphi\in\Aut^0(X)$, then the induced birational map $f^{-1}\circ\varphi\circ f$ is regular, see  the proof of Thm 2.1 in~\cite{hass-tschink-lag-planes}.
Since $f^{-1}\circ\varphi\circ f$ acts trivially on $H^2(Y)$, we get
 a natural isomorphism 
\begin{equation}\label{amalia}
\begin{matrix}
\Aut^0(X) & \overset{\sim}{\lra} & \Aut^0(Y) \\
f & \mapsto & f^{-1}\circ\varphi\circ f
\end{matrix}
\end{equation}
\end{rmk}
\begin{rmk}\label{rmk:stessocomm}
Let $f\colon Y\dra X$  be a birational  map between HK manifolds of Kummer type or of type OG6. 
The isomorphism in~\eqref{amalia} restricts to an   isomorphism $T(X)\overset{\sim}{\lra} T(Y)$. Since $X,Y$ have trivial canonical line bundles, there exist  open subsets  $V\subset Y$ and $U\subset X$ with complements of codimension at least $2$ such that
$f$ is regular on $V$ and it defines an isomorphism $U\overset{\sim}{\lra} V$. 
Hence pull-back defines an isomorphism $f^{*}\colon\Pic(X)\overset{\sim}{\lra}\Pic(Y)$. If $L$ is a line bundle on $X$, then the isomorphism $T(X)\overset{\sim}{\lra} T(Y)$  
 lifts to an isomorphism of theta groups $\cG(L)\overset{\sim}{\lra} \cG(f^{*}(L))$ because $L_{|U}$ is identified  with $f^{*}(L)_{|V}$. In particular the commutator pairings of $\cG(L)$ and $\cG(f^{*}(L))$ are isomorphic.

\end{rmk}
\begin{rmk}\label{rmk:variacom}
Let $f\colon\cX\to T$ be a family of HK manifolds of Kummer  or  OG6 type over a connected base $T$, and assume that $\cL$ is a line bundle on $\cX$. If $a,b\in T$ then the commutator pairings $e^{L_a}$ on $T(X_a)$ 
and $e^{L_b}$  on $T(X_a)$ (here $X_p:=f^{-1}(p)$) are isomorphic. More precisely, any arc 
$\gamma\colon [0,1]\to T$ starting at $a$ and ending at $b$ determines an isomorphism 
$\gamma_{*}\colon T(X_a)\overset{\sim}{\lra} T(X_b)$ (see~\cite{hass-tschink-lag-planes}), and we have
$e^{L_a}(\alpha,\beta)=e^{L_b}(\gamma_{*}(\alpha),\gamma_{*}(\beta))$. In fact the commutator map varies continuously, and since it takes values in a finite subgroup of $\CC^{*}$ (because $T(X)$ is finite) it follows that it is locally constant.
\end{rmk}
\begin{rmk}
Let $L_1,L_2$ be line bundles on $X$, and let $L:=L_1 \otimes L_2$. For 
$(\alpha,\beta)\in T(X)\times T(X)$ we have
\begin{equation}\label{tenspair}
e^L(\alpha,\beta)=e^{L_1}(\alpha,\beta)\cdot e^{L_2}(\alpha,\beta). 
\end{equation}
\end{rmk}
\subsection{The Heisenberg group}\label{sec:heisengrp}
\setcounter{equation}{0}
If  the commutator pairing of $\cG(\cF)$ is non degenerate, then $\cG(\cF)$ is isomorphic to a Heisenberg group defined as follows. Let $1\le d_1|d_2|\ldots|d_g$ be natural numbers, let $\dd:=(d_1,\ldots,d_g)$, and let
\begin{equation}
J(\dd):=\ZZ/(d_1)\oplus\ldots\oplus\ZZ/(d_g).
\end{equation}
Let $\wh{J}(\dd)$ be the group of characters of $J(\dd)$.
\begin{dfn}
The \emph{Heisenberg group} $\cH(\dd)$ is the set $\CC^{*}\times J(\dd)\times\wh{J}(\dd)$ with the group operation defined by
\begin{equation}
(\alpha,x,f)\cdot(\beta,y,g):=(\alpha\cdot\beta\cdot g(x),x+y,f\cdot g).
\end{equation}
\end{dfn}
The forgetful map $\cH(\dd)\lra  J(\dd)\times\wh{J}(\dd)$ is a homomorphism of groups, and it fits into an exact sequence of groups
\begin{equation}\label{estcentr}
1\lra \CC^{*}\lra \cH(\dd)\lra   J(\dd)\times\wh{J}(\dd)\lra 1
\end{equation}
which gives rise to a commutator pairing
\begin{equation}
\left(J(\dd)\times\wh{J}(\dd)\right)\times \left(\wh{J}(\dd)\times J(\dd)\right)\overset{e^{\cH(\dd)}}{\lra} \CC^{*}.
\end{equation}
The next result follows from Corollary of Th.~1, p.~294 in~\cite{mum-eq-abvars}.
\begin{prp}\label{prp:senondeg}
Let $X$ be a HK manifold of type $Kum_n$ or of type $\OG$. Let $\cF$ be a simple sheaf on $X$ such that~\eqref{keyassumption} holds, and such that the commutator pairing $e^{\cF}$ is non degenerate. Then, if $X$ is 
of type $Kum_n$ there exists an isomorphism $\cG(\cF)\overset{\sim}{\lra} \cH(n+1,n+1)$, and if $X$ is  of type $\OG$ 
 there exists an isomorphism $\cG(\cF)\overset{\sim}{\lra} \cH(2,2,2,2)$. In both cases the isomorphism can be chosen so that the exact sequence in~\eqref{thetahk} is isomorphic to the exact sequence in~\eqref{estcentr}.
\end{prp}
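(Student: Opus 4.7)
Write $H:=\Trans(X)$ and $e:=e^{\cF}$ for brevity. By the discussion following~\eqref{commpair}, $e$ is bimultiplicative and alternating ($e(\alpha,\alpha)=1$), and it is non-degenerate by hypothesis; so $(H,e)$ is a finite abelian group equipped with a non-degenerate symplectic $\CC^{*}$-valued form. The plan is to find a Lagrangian decomposition of $(H,e)$, use it to split the central extension~\eqref{thetahk} over a maximal isotropic subgroup, and then invoke Mumford's classification of Heisenberg groups by their commutator pairing.

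The first step is to produce a decomposition $H=K\oplus K'$ in which $K$ and $K'$ are both maximal isotropic for $e$, so that $e$ induces a perfect pairing $K\times K'\to\CC^{*}$ and hence an identification $K'\cong\wh{K}$. For a finite abelian group equipped with a non-degenerate alternating $\CC^{*}$-valued pairing this is standard: decompose into $p$-primary components and apply a symplectic basis argument on each $\ZZ/p^{k}\ZZ$-summand. In the Kummer case $H\cong(\ZZ/(n+1))^{4}$, forcing $K\cong(\ZZ/(n+1))^{2}=J(n+1,n+1)$; in the OG6 case $H\cong(\ZZ/2)^{8}$ by Example~\ref{expl:eccaut}, forcing $K\cong(\ZZ/2)^{4}=J(2,2,2,2)$. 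Setting $\dd=(n+1,n+1)$ or $\dd=(2,2,2,2)$ respectively, one obtains an isomorphism $H\overset{\sim}{\lra} J(\dd)\oplus\wh{J}(\dd)$ under which $e$ coincides with the commutator pairing of $\cH(\dd)$ on $J(\dd)\times\wh{J}(\dd)$.

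Since the restriction of $e$ to $K$ is trivial, Remark~\ref{rmk:solltriv} lifts the action of $K$ on $X$ to an action on $\cF$, giving a splitting $s\colon K\to\cG(\cF)$ of the extension~\eqref{thetahk} over $K$. Choosing arbitrary lifts to $\cG(\cF)$ of a generating set of $\wh{K}$ and combining with $s(K)$, one presents $\cG(\cF)$ in terms of $\CC^{*}$, $s(K)$, and these lifts, with the relevant commutation relations governed by $e$ and hence matching the standard presentation of $\cH(\dd)$. This identifies $\cG(\cF)$ with $\cH(\dd)$ compatibly with the central $\CC^{*}$, which is exactly the content of the corollary to Theorem~1 on p.~294 of~\cite{mum-eq-abvars}: any central extension of a finite abelian group by $\CC^{*}$ is determined up to isomorphism of exact sequences by its commutator pairing.

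The bulk of the work is the Lagrangian decomposition, which pins down the two specific values of $\dd$ from the known structure of $\Trans(X)$; once this is in hand, Mumford's corollary is a black box. The only delicate point is to choose lifts of a $\wh{K}$-generating set so that the presentation literally matches that of $\cH(\dd)$, but Mumford's proof handles this by inductive construction over a generating set, and the same argument applies verbatim in our setting.
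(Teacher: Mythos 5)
Your proposal is correct and follows the same route as the paper: the paper simply cites the Corollary of Theorem~1, p.~294 of~\cite{mum-eq-abvars}, and your argument (Lagrangian decomposition of $(\Trans(X),e^{\cF})$, splitting over a maximal isotropic via Remark~\ref{rmk:solltriv}, then matching the presentation of $\cH(\dd)$) is precisely the content of that corollary unwound, with the correct identification of $\dd$ as $(n+1,n+1)$ and $(2,2,2,2)$ forced by $\Trans(X)\cong(\ZZ/(n+1))^4$ and $(\ZZ/2)^8$ respectively.
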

Next we consider representations of $\cH(\dd)$. 
\begin{dfn}
The \emph{Schr\"odinger representation} of $\cH(\dd)$ is given by
\begin{equation}
\begin{matrix}
\cH(\dd)\times \CC^{J(\dd)} & \lra &  \CC^{J(\dd)}  \\
((\alpha,x,f),\varphi) & \mapsto & \left(y \mapsto \alpha\cdot f(y)\cdot\varphi(x+y)\right)
\end{matrix}
\end{equation}
\end{dfn}
The key result about representations is the following.
\begin{prp}[Prop.~3, p.~295 in~\cite{mum-eq-abvars}]\label{prp:tutteschrod}
Let  $\rho\colon \cH(\dd)\to \GL(V)$ be a finite dimensional representation   of $\cH(\dd)$ such that $\rho(\alpha,0,0)=\alpha\Id_V$ for every $\alpha\in\CC^{*}$. Then $\rho$ is a direct sum of copies of the Scr\"odinger representation. 
\end{prp}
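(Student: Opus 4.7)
This is a Stone--von Neumann type statement, so the plan is to adapt the classical proof in the elementary finite setting (which is the one that applies here, since modulo the center $\CC^{*}$ the Heisenberg group $\cH(\dd)$ becomes the \emph{finite} group $J(\dd)\times\wh{J}(\dd)$). I would first check that the Schr\"odinger representation itself is irreducible and has the right central character, and then show that any $\rho$ with central character $\alpha\mapsto\alpha\Id_V$ is a direct sum of copies of it. The representation $V$ need not be semisimple a priori as a representation of the non-compact group $\cH(\dd)$, but the argument below will produce an explicit internal direct-sum decomposition, bypassing any general semisimplicity result.

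\textbf{Step 1 (eigenspace decomposition on a maximal abelian subgroup).} Let
$A:=\{(\alpha,0,f)\mid \alpha\in\CC^{*},\ f\in\wh{J}(\dd)\}< \cH(\dd)$,
which is abelian. Because $\rho(\alpha,0,0)=\alpha\Id_V$ and $\rho(1,0,f)$ has finite order (it acts via a character of the finite group $\wh{J}(\dd)$), the restriction $\rho|_A$ splits $V$ into simultaneous eigenspaces. Using Pontryagin duality $\wh{\wh{J}(\dd)}=J(\dd)$, I write
\begin{equation*}
V=\bigoplus_{y\in J(\dd)} V_y,
\qquad
V_y:=\{v\in V\mid \rho(1,0,f)v=f(y)\,v\ \forall f\in\wh{J}(\dd)\}.
\end{equation*}

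\textbf{Step 2 (translations permute the eigenspaces).} Using the multiplication~\eqref{prodinth} of $\cH(\dd)$, a short direct computation gives the key relation
\begin{equation*}
\rho(1,0,f)\,\rho(1,x,1)=f(-x)\,\rho(1,x,1)\,\rho(1,0,f),
\end{equation*}
from which it follows that $\rho(1,x,1)$ carries $V_y$ into $V_{y-x}$. Since $(1,-x,1)=(1,x,1)^{-1}$ inverts this map, $\rho(1,x,1)\colon V_y\to V_{y-x}$ is an isomorphism. Consequently all $V_y$ have a common (finite) dimension $n\ge 0$; if $n=0$ then $V=0$ and there is nothing to prove, so assume $n\ge 1$.

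\textbf{Step 3 (construction of Schr\"odinger summands).} Choose a basis $v_1,\ldots,v_n$ of $V_0$ and, for each $i$, define a linear map
\begin{equation*}
\varphi_i\colon \CC^{J(\dd)}\lra V,\qquad \varphi_i(\delta_y):=\rho(1,-y,1)\,v_i,
\end{equation*}
where $\{\delta_y\}_{y\in J(\dd)}$ is the standard basis of $\CC^{J(\dd)}$. A bookkeeping calculation, using the product in $\cH(\dd)$ together with the fact that $v_i\in V_0$ (so $\rho(1,0,f)v_i=v_i$), shows that $\varphi_i$ intertwines the Schr\"odinger action with $\rho$. Because $\rho(1,x,1)$ is an isomorphism $V_0\xrightarrow{\sim}V_{-x}$, the map $\varphi_i$ is injective; let $W_i:=\varphi_i(\CC^{J(\dd)})$. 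Each $W_i$ is thus an $\cH(\dd)$-subrepresentation of $V$ isomorphic to the Schr\"odinger representation.

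\textbf{Step 4 (assembling the direct sum).} The images $W_i$ are compatible with the eigenspace decomposition: $W_i\cap V_y=\CC\,\rho(1,-y,1)v_i$. Since $\{\rho(1,-y,1)v_i\}_{i=1}^{n}$ is the image of the basis $\{v_i\}$ under the isomorphism $V_0\xrightarrow{\sim}V_y$, it is itself a basis of $V_y$. Summing over $y\in J(\dd)$ gives
\begin{equation*}
V=\bigoplus_{y\in J(\dd)}V_y=\bigoplus_{i=1}^{n}W_i,
\end{equation*}
which exhibits $V$ as a direct sum of $n$ copies of the Schr\"odinger representation.

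The main delicate point is Step~3: getting the signs and twists in the definition of $\varphi_i$ exactly right so that it intertwines the two actions. Everything else is linear algebra driven by the commutator relation in Step~2, which in turn is just the non-triviality of the central extension~\eqref{estcentr}.
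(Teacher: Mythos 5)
Your proof is correct — I checked the group law computations in Steps 2 and 3 (in particular $(1,0,f)(1,x,1)=(1,x,f)$ versus $(1,x,1)(1,0,f)=(f(x),x,f)$, and the intertwining identity $\rho(\alpha,x,f)\,\rho(1,-y,1)v_i=\alpha f(y-x)\,\rho(1,x-y,1)v_i$, which matches the Schr\"odinger action on $\delta_y$). The paper does not prove this statement itself but cites Prop.~3, p.~295 of Mumford's \emph{On the equations defining abelian varieties}, and your argument is exactly that standard Stone--von Neumann argument (eigenspace decomposition under the maximal abelian subgroup $\{(\alpha,0,f)\}$, translations permuting the eigenspaces, explicit intertwiners from $V_0$), so it is essentially the same approach.
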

\subsection{The theta group and  global sections}\label{sec:reponsec}
\setcounter{equation}{0}
Let $X$ be a HK manifold of type $Kum_n$ or of type OG6, and let $\cF$ be a simple sheaf on $X$ such that~\eqref{keyassumption} holds. The action of $T(X)$ on $\PP(H^0(X,\cF))$ lifts to an action of $\cG(\cF)$ on 
$H^0(X,\cF)$ as follows:
\begin{equation}\label{jazz}
\begin{matrix}
\cG(\cF)& \overset{\Psi}{\lra} & \GL(H^0(X,\cF)) \\
(g,\varphi) & \mapsto & \sigma\mapsto (g^{-1})^{*}(\varphi(\sigma).
\end{matrix}
\end{equation}
Note that if $\alpha\in\CC^{*}$ then the element $(\Id_X,\alpha)\in\cG(\cF)$ acts as $\alpha \Id_{H^0(X,\cF)}$. Now assume that the commutator pairing of $\cG(\cF)$ is non degenerate, and hence $\cG(\cF)$ is isomorphic to a  Heisenberg group by Proposition~\ref{prp:senondeg}.  Then the $\cG(\cF)$ representation $H^0(X,\cF)$ is isomorphic to a direct sum of Schr\"odinger representations by Proposition~\ref{prp:tutteschrod}
\begin{rmk}\label{rmk:unasez}
 If  $H^0(X,\cF)$ is non zero then 
 $e^{\cF}$ may be read off from the representation of $\cG(\cF)$ on  $H^0(X,\cF)$. More precisely, for $i\in\{1,2\}$ let 
 $g_i\in T(X)$, and let $\wt{g}_i=(g_i,\varphi_i)$ be a lift of $g_i$ to $\cG(\cF)$. Then
\begin{equation}\label{lalaland}
\Psi(\wt{g}_1)\circ \Psi(\wt{g}_2)\circ \Psi(\wt{g}_1)^{-1}\circ \Psi(\wt{g}_2)^{-1}=
(\Id_{\cF},e^{\cF}(g_1,g_2)\cdot\Id_{H^0(X,\cF)}).
\end{equation}
 For example, if 
 $h^0(X,\cF)=1$ it follows that  the commutator pairing is trivial, i.e.~$\cG(\cF)$ is the direct product $\CC^{*}\times T(X)$. More generally, suppose that $H<T(X)$ is a subgroup which acts trivially on  $\PP(H^0(X,\cF))$. If  $h\in H$ and $\wt{h}\in\cG(\cF)$ is a lift of $h$, then $\Psi(\wt{h})$ is a multiple of $\Id_{H^0(X,\cF)}$, and 
hence the equality in~\eqref{lalaland} shows that $e^{\cF}(h,g)=1$ for all $g\in\cG(\cF)$. 
Thus $H$ is in the kernel of  $e^{\cF}$.
\end{rmk}
\section{The commutator pairing for HK manifolds of Kummer type}\label{sec:compkum}
\setcounter{equation}{0}
\subsection{Preliminaries on generalized Kummers}\label{subsec:recaponkumm}
\setcounter{equation}{0}
Let $A$ be an abelian surface. The generalized Kummer $K_n(A)$ is the fiber over $0$ of the map $A^{[n+1]}\to A$ given by the composition
\begin{equation*}
A^{[n+1]}\overset{\mathfrak h}{\lra} A^{(n+1)}\overset{\sigma}{\lra} A
\end{equation*}
where  ${\mathfrak h}[Z]:=\sum_{a\in A}\ell(\cO_{Z,a})(a)$ is the Hilbert - Chow map and $\sigma$ is the summation map in the group $A$, i.e.~$\sigma((a_1)+\ldots+(a_{n+1})):=a_1+\ldots+a_{n+1}$. Here and in the rest of the paper we denote by $(a)$ the generator of the group of $0$ cycles on $A$ that corresponds to the point $a\in A$. Hence  if $k_1,\ldots,k_{n+1}$ are integers and $a_1,\ldots,a_{n+1}\in A$ then $k_1(a_1)+\ldots+k_{n+1}(a_{n+1})$ is a $0$ cycle while $k_1 a_1+\ldots+k_{n+1}a_{n+1}$ is an element of $A$. 

The cohomology group $H^2(K_n(A);\ZZ)$ is described as follows. There is a homomorphism $\mu_n\colon H^2(A)\to H^2(K_n(A))$ given by the composition
\begin{equation*}
H^2(A)\overset{s_{n+1}}{\lra} H^2(A^{(n+1)})\overset{{\gh}^{*}}{\lra} H^2(K_n(A))
\end{equation*}
where $s_{n+1}$ is the natural symmetrization map. The map $\mu_n$ is injective but not surjective because $\gh$ contracts the prime divisor (here we assume that $n\ge 2$)
\begin{equation*}
\Delta_n(A):=\{[Z]\in A^{[n+1]} \mid \text{$Z$ is not reduced}\}.
\end{equation*}
The cohomology class of $\Delta_n(A)$ is (uniquely) divisible by $2$ in integral cohomology. We let 
$\delta_n(A)\in H^2(A^{(n+1)};\ZZ)$ be the class such that
\begin{equation}
2\delta_n(A)=\cl(\Delta_n(A)).
\end{equation}
(Beware: $\delta_n(A)$ is \emph{not} the class of $\Delta_n(A)$.) One has 
\begin{equation}
\mu_n(H^2(A;\ZZ))\oplus_{\bot} \ZZ\delta_n(A),
\end{equation}
where orthogonality is with respect to the BBF quadratic form. Moreover the BBF quadratic form is given by 
\begin{equation}\label{kummerbbf}
q(\mu_n(\alpha)+x\delta_n(A))=(\alpha,\alpha)_A-2(n+1)x^2.
\end{equation}
(Here $(\alpha,\alpha)_A$ is the self intersection of $\alpha\in H^2(A)$.) 
We will use the following result of Mongardi-Pacienza. 
\begin{prp}[{\cite[Theorem~4.2]{mon-pac-uniruled}}]\label{prp:defpol}
Let $X$ be a HK manifold of type $Kum_n$ and let  $L$  be a line bundle on $X$. 
There exist a family  $f\colon \cX\to T$ of HK manifolds  over a connected base $T$, points $t_0,t_1\in T$ and a line bundle $\cL$ on $\cX$ with the following properties:
\begin{enumerate}
\item[(a)]
 $X_{t_0}:=f^{-1}(t_0)$ is isomorphic to $X$, and  
  $\cL_{|X_{t_0}}$  is isomorphic to $L$. 
\item[(b)]
 $X_{t_1}:=f^{-1}(t_1)$ is isomorphic to a generalized Kummer $K_n(A)$, and 
\begin{equation}
 c_1(\cL_{|X_{t_1}})=\mu_n(a_1 e_1+a_2 e_2)+x\delta_n(A),
\end{equation}
where $e_1,e_2\in H^2(A;\ZZ)$ is a standard basis of a hyperbolic sublattice of $H^2(A;\ZZ)$ (i.e.~$(e_i,e_i)_A=0$ and $(e_1,e_2)_A=1$), $a_1$ divides $a_2$, and of course the class  $a_1 e_1+a_2 e_2$ belongs to $\NS(A)$.
\end{enumerate}
\end{prp}
\subsection{Main result}
\setcounter{equation}{0}
Below is the main result of the present section.
\begin{thm}\label{thm:compairkum}
Let $X$ be a HK manifold of type $\Kum_n$ for $n\ge 2$, and let $L$ be a primitive line bundle on $X$, i.e.~such that  $c_1(L)$ is primitive. Let 
$f\colon \cX\to T$ be  a family of HK manifolds as in Proposition~\ref{prp:defpol}, and let $b_i:=\gcd\{a_i,n+1\}$ where $a_1,a_2$ are as in Item~(b) of 
Proposition~\ref{prp:defpol}. 
Then 
\begin{equation}\label{conucleo}
\wh{T}(X)/\im(E^L)\cong (\ZZ/(b_1))^2\oplus (\ZZ/(b_2))^2
\end{equation}
\end{thm}
\begin{rmk}
By Theorem~\ref{thm:compairkum} $\cG(L)$ is a Heisenberg group if and only if 
$b_1=b_2=1$. Since
\begin{equation}
\divisore(L)=\gcd\{a_1,2(n+1)\},
\end{equation}
$b_1=1$ if and only if $\divisore(L)=1$ or $\divisore(L)=2$ and $n$ is even. Since $q(L)/2=a_1 a_2-x^2(n+1)$, we get that if $b_1=1$ then $b_2=1$ if and only if $\gcd\{n+1,q(L)/2\}=1$. This shows that Theorem~\ref{thm:heisenkum} follows from Theorem~\ref{thm:compairkum}.
\end{rmk}
\begin{rmk}
For a prime $p$ and $a\in\QQ$, let $\ord_p(a)$ be the integer $m$ such that $a=p^m(b/c)$ where $b$ and $c$ are coprime to $p$ (we let 
$\ord_p(0):=+\infty$).
If $X$ is a HK manifold of type $\Kum_n$ and  $L$ is a line bundle on $X$, then
\begin{equation*}
b_1=
\begin{cases}
\divisore(L) & \text{if $\ord_2(\divisore(L))\le \ord_2(n+1)$}, \\
\frac{\divisore(L)}{2} & \text{if $\ord_2(\divisore(L))<\ord_2(n+1)$.}
\end{cases}
\end{equation*}
Let $p$ be a prime.  If $\ord_p(n+1)=0$ then $\ord_p(b_2)=0$. Suppose that  $\ord_p(n+1)>0$; if 
$\ord_p(\divisore(L))< \ord_p(n+1)$ (for $p=2$ equality is also allowed)
 then
\begin{equation}
\ord_p(b_2)=\ord_p\left(\frac{q_X(L)}{2\divisore(L)}\right).
\end{equation}
\end{rmk}
\begin{rmk}
Let $X$ be a HK manifold of type $\Kum_n$ for $n\ge 2$, and let $L$ be an ample primitive line bundle on $X$. Let $q_X(L)=2e$. By Kodaira vanishing and Britzke's formula for Huybrechts' HRR formula for HK manifolds of Kummer type, we have
\begin{equation}\label{hiriro}
h^0(X,L)=(n+1)\cdot{e+n\choose n}.
\end{equation}
Now suppose that the commutator pairing of $L$ is non degenerate, i.e.~that the hypotheses of Theorem~\ref{thm:heisenkum} hold. Then $H^0(X,L)$ is isomorphic to a direct sum of copies of the Heisenberg representation $\cH(n+1,n+1)$. Since $\cH(n+1,n+1)$ has dimension $(n+1)^2$, it follows that $h^0(X,L)$ must be a multiple of $(n+1)^2$, and by the equality in~\eqref{hiriro} this means that $(n+1)$ divides ${e+n\choose n}$. An elementary argument confirms that this is the case. Moreover we get that  $H^0(X,L)$  is the  Heisenberg representation if and only if $e=1$, i.e.~$q_X(L)=2$.
\end{rmk}

\subsection{The commutator pairing for generalized Kummers} 
\setcounter{equation}{0}
Let $A$ be a compact complex torus, and let $\ell\in \NS(A)$.  The commutator pairing $e^{(n+1)\ell}$ of a line bundle $L_A$ on $A$ such that $c_1(L_A)=(n+1)\ell$ is defined on $K((n+1)\ell)\times K((n+1)\ell)$, where 
$K((n+1)\ell)$ is the subgroup of translations $g$ of $A$ such that $g^{*}(L_A)\cong L_A$. Since $A[n+1]$ is a subgroup of  $K((n+1)\ell)$, it makes  sense to restrict $e^{(n+1)\ell}$ to $A[n+1]\times A[n+1]$. Recall that we have a natural identification $T(K_n(A))=A[n+1]$, see Example~\ref{expl:eccaut}.
\begin{prp}\label{prp:compatibili}
Keep notation as above, and let
 $L$ be  the line bundle on $K_n(A)$ such that 
$c_1(L)=\mu_n(\ell)$ (notation as in Subsection~\ref{subsec:recaponkumm}). Then $e^L$ is equal to the restriction of $e^{(n+1)\ell}$ to $A[n+1]\times A[n+1]$. 
\end{prp}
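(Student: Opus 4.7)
The plan is to construct a canonical isomorphism of central $\CC^{*}$-extensions
\[\Phi\colon \cG(L_A)|_{A[n+1]}\overset{\sim}{\lra}\cG(L)\]
which is the identity on both the central $\CC^{*}$ and on the quotient $A[n+1]$; the equality of commutator pairings then follows at once.

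To produce $\Phi$ one realises $L$ from an auxiliary line bundle $\ell_A$ on $A$ with $c_1(\ell_A)=\ell$ (so $\ell_A^{\otimes(n+1)}\cong L_A$). Let $\sigma\colon A^{n+1}\to A$ be the summation map, set $A_0^{n+1}:=\sigma^{-1}(0)$ and $N:=\ell_A^{\boxtimes(n+1)}|_{A_0^{n+1}}$. The symmetric group $S_{n+1}$ acts on $A_0^{n+1}$ and is free on the open subset $U_0$ of pairwise distinct tuples; the Hilbert--Chow morphism identifies $U_0/S_{n+1}$ with an open subset $V\subset K_n(A)$ whose complement in $K_n(A)$ is the divisor $\Delta_n(A)\cap K_n(A)$. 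With its permutation linearisation, $N|_{U_0}$ descends to a line bundle on $V$ of first Chern class $\mu_n(\ell)|_V$, which is canonically isomorphic to $L|_V$ because $\Pic^0(K_n(A))=0$; any isomorphism of line bundles on $V$ with matching first Chern classes extends uniquely across $\Delta_n(A)\cap K_n(A)$ to all of $K_n(A)$.

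The core of the argument is the following rigidification statement. For $\tau\in A[n+1]$ set $P_\tau:=t_\tau^{*}\ell_A\otimes \ell_A^{-1}\in\Pic^0(A)$. By the theorem of the cube, $P_\tau^{\boxtimes(n+1)}=\sigma^{*}P_\tau$ on $A^{n+1}$, so its restriction to $A_0^{n+1}=\sigma^{-1}(0)$ is canonically trivialised by pulling back the rigidification $P_\tau|_0\cong\CC$. Likewise $P_\tau^{\otimes(n+1)}\in\Pic^0(A)$ is canonically rigidified-trivial because $\phi_{L_A}(\tau)=(n+1)\phi_\ell(\tau)=0$. Using these trivialisations, any choice of an isomorphism $\psi\colon \ell_A\otimes P_\tau\overset{\sim}{\lra} t_\tau^{*}\ell_A$ (the set of such forming a canonical $\CC^{*}$-torsor, since the two sides are tautologically equal) yields simultaneously an isomorphism $\psi^{\otimes(n+1)}\colon L_A\overset{\sim}{\lra} t_\tau^{*}L_A$ and an $S_{n+1}$-equivariant isomorphism $\psi^{\boxtimes(n+1)}|_{A_0^{n+1}}\colon N\overset{\sim}{\lra} t_\tau^{*}N$, the latter descending and extending to an isomorphism $L\overset{\sim}{\lra} \varphi_\tau^{*}L$ on $K_n(A)$. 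Both constructions scale by $\alpha\mapsto\alpha^{n+1}$ under $\psi\mapsto \alpha\psi$, so the assignment $\Phi_\tau\colon \psi^{\otimes(n+1)}\mapsto \psi^{\boxtimes(n+1)}|_{A_0^{n+1}}$ is well-defined independently of the choice of an $(n+1)$-th root and is the identity on the central $\CC^{*}$.

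Multiplicativity of $\Phi$ in the sense of~\eqref{prodinth} follows from the fact that $\tau\mapsto P_\tau=\phi_\ell(\tau)$ is a group homomorphism $A[n+1]\to \Pic^0(A)$, that the induced rigidified isomorphism $P_{\tau_1+\tau_2}\cong P_{\tau_1}\otimes P_{\tau_2}$ is compatible with composition of translations, and that the operations $\otimes(n+1)$ and $\boxtimes(n+1)|_{A_0^{n+1}}$ are functorial under composition. The main technical obstacle is the scalar bookkeeping through the various rigidifications and through the unique extension across $\Delta_n(A)\cap K_n(A)$; once this is settled, $\Phi$ is an isomorphism of central $\CC^{*}$-extensions and the equality $e^L=e^{(n+1)\ell}|_{A[n+1]\times A[n+1]}$ is immediate.
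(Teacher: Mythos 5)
Your route is genuinely different from the paper's: the paper deforms $A$ to a product of elliptic curves, reduces by multiplicativity to $\ell=p_1^{*}(\eta_1)$, and then reads off $e^L$ from the action of the theta group on $H^0(K_n(A),L)\cong H^0(C_1,\cO_{C_1}((n+1)p))$ via the Lagrangian fibration. You instead try to build a direct isomorphism of central extensions $\cG(L_A)|_{A[n+1]}\to\cG(L)$ through the bundle $N=\ell_A^{\boxtimes(n+1)}|_{A_0^{n+1}}$ and $S_{n+1}$-descent. The descent-and-extension part of your construction is fine (invertible functions on the complement of $\Delta_n(A)$ are constants since $\Delta_n(A)$ is an irreducible divisor with nonzero class, and $\Pic(K_n(A))\hookrightarrow H^2$), and the target statement is true. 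But there is a genuine gap at the decisive step.

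The gap is the multiplicativity of $\Phi$, which you attribute to "functoriality" and defer as "scalar bookkeeping"; in fact it is equivalent to the proposition itself and is not established by anything you say. Under the tautological identifications $\ell_A\otimes P_\tau=t_\tau^{*}\ell_A$ and $P_{\tau_1+\tau_2}=P_{\tau_2}\otimes t_{\tau_2}^{*}P_{\tau_1}$, the isomorphisms $\psi$ form a group isomorphic to $\CC^{*}\times A[n+1]$, which is \emph{abelian}; since $\cG(L_A)|_{A[n+1]}$ and $\cG(L)$ are in general non-abelian, neither $\psi\mapsto\psi^{\otimes(n+1)}$ nor $\psi\mapsto\psi^{\boxtimes(n+1)}|_{A_0^{n+1}}$ is a group homomorphism. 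Each fails to be multiplicative by a $2$-cocycle on $A[n+1]$ measuring the discrepancy between the rigidified trivialisation chosen for $\tau_1+\tau_2$ and the product of the trivialisation for $\tau_2$ with the $t_{\tau_2}$-translate of the one for $\tau_1$; on the $\otimes(n+1)$ side this discrepancy is essentially the value at $\tau_2$ of the rigidified global trivialisation of the degree-zero bundle $\phi_{L_A}(\tau_1)$, which is precisely the classical description of $e^{L_A}(\tau_1,\tau_2)$. So the assertion that $\Phi$ (the composite of one map with the inverse of the other) is multiplicative is exactly the assertion that the two cocycles coincide, i.e.\ the statement to be proved. A complete proof along your lines must carry out this cocycle comparison, or else replace it by a direct computation of the Mumford commutator pairing of $N$ on the abelian variety $A_0^{n+1}\cong A^n$ restricted to the diagonal copy of $A[n+1]$ (the alternating form of $c_1(N)=\sum_i p_i^{*}\ell$ restricts on the diagonal to $(n+1)E_\ell=E_{(n+1)\ell}$, which gives the result); as written, the heart of the argument is missing.
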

\begin{proof}
There exist a family of compact complex tori $f\colon \cA\to T$ over a connected base $T$, points $t_0,t_1\in T$ and a line bundle $\xi$ on $\cA$ with the following properties:
\begin{enumerate}
\item[(a)]
 The fiber $A_{t_0}:=f^{-1}(t_0)$ is isomorphic to $A$, while the fiber $A_{t_1}:=f^{-1}(t_1)$ is isomorphic to the product  $C_1\times C_2$ of elliptic curves.
\item[(b)]
The line bundle  $L_{t_0}$ on $K_n(A_{t_0})$ such that 
$c_1( L_{t_0})=\mu_n(c_1(\xi_{t_0}))$ is identified with $L$ via the isomorphism 
$K_n(A_{t_0})\overset{\sim}{\lra} K_n(A)$ induced by the isomorphism $A_{t_0}\overset{\sim}{\lra} A$ of Item~(a).
\item[(c)]
The line bundle $\xi_{t_1}$  is of product type,
 i.e.~$\xi_{t_1}\cong \cO_{C_1}(D_1)\boxtimes \cO_{C_2}(D_2)$ for divisors $D_1,D_2$ on 
 $C_1,C_2$ respectively.
\end{enumerate}
Considering the relative family of generalized Kummers over $T$ with fiber $K_n(A_t)$ over $t$ and recalling 
Remark~\ref{rmk:variacom}, it follows that it suffices to prove the proposition under the additional assumption that $A=C_1\times C_2$ and $\ell$ is of product type, 
i.e.~$\ell=\rho_1^{*}(a_1 \eta_1)+\rho_2^{*}(a_2 \eta_2)$ where $\rho_i\colon C_1\times C_2\to C_i$ is  projection, and
$\eta_i\in H^2(C_i;\ZZ)$ is the fundamental class. 
Hence by~\eqref{tenspair} we may assume that $\ell=\rho_1^{*}(\eta_1)$. We are reduced to proving the Proposition in this particular case.

Let $A^{(n+1),0}\subset A^{(n+1)}$ and  $C_1^{(n+1),0}\subset C_1^{(n+1)}$ be the subgroups of cycles summing up to $0$ in $A$ and $C_1$ respectively.  Let $p\in C_1$ be the zero of the addition law. Then $C_1^{(n+1),0}$ is naturally identified with
 $|\cO_{C_1}((n+1)p)|\cong\PP^n$. 
The composition of the Hilbert-Chow map $K_n(A)\to A^{(n+1),0}$ and the projection $A^{(n+1),0}\to C_1^{(n+1),0}$ is a Lagrangian fibration $\pi\colon K_n(A)\to C_1^{(n+1),0}$. We have $L\cong \pi^{*}(\cO_{\PP^n}(1))$. In particular the action of $C_2[n+1]$ on $|L|$ is trivial and hence
  $C_2[n+1]$ is contained in the kernel of $E_L$, see Remark~\ref{rmk:unasez}. 
Hence $e^L$ defines a skew-symmetric pairing on $C_1[n+1]$ with values in $\CC^{*}$. We claim that this pairing is the same as the commutator pairing of the line bundle 
$\cO_{C_1}((n+1)p)$ on $K(\cO_{C_1}((n+1)p))=C_1[n+1]$. In fact the latter can be computed by the action of the theta group 
$\cG( \cO_{C_1}((n+1)p))$ on the space of sections $H^0(C_1,\cO_{C_1}((n+1)p))$, and since the pull-back by $\pi$ defines an isomorphism 
\begin{equation*}
H^0(C_1,\cO_{C_1}((n+1)p))=H^0(\PP^n,\cO_{\PP^n}(1))\overset{\pi^{*}}{\lra} H^0(K_n(A),L),
\end{equation*}
our claim follows. Since the first Chern class of $\cO_{C_1}((n+1)p)\boxtimes \cO_{C_2}$ is equal to $n\ell$ we are done.
\end{proof}
\begin{crl}\label{crl:accoppiamento}
Keep notation and hypotheses as in Proposition~\ref{prp:compatibili}, and let $a_1 ,a_2$ be the elementary divisors of 
$\ell$, where $a_1 | a_2$. 
 Then 
\begin{equation}\label{cirocirillo}
\wh{T}(K_n(A))/\im(E^L)\cong (\ZZ/(b_1))^2\oplus (\ZZ/(b_2))^2
\end{equation}
 where $b_i:=\gcd\{n+1,a_i\}$.
\end{crl}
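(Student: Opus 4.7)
The strategy is to invoke Proposition~\ref{prp:compatibili} to transfer the computation from $K_n(A)$ to $A$, then diagonalize. By Proposition~\ref{prp:compatibili}, the pairing $e^L$ on $T(K_n(A))=A[n+1]$ coincides with the restriction of $e^{(n+1)\ell}$ to $A[n+1]\times A[n+1]$, so the task reduces to describing the cokernel of the resulting homomorphism $E_L\colon A[n+1]\to\wh{A[n+1]}$.

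The key step is to choose a symplectic basis $\lambda_1,\ldots,\lambda_4$ of $H_1(A;\ZZ)$ adapted to $\ell$, i.e.~in which the alternating form $E_\ell$ has the block-diagonal matrix
\[
\Omega \;:=\; \begin{pmatrix} 0 & a_1 & 0 & 0 \\ -a_1 & 0 & 0 & 0 \\ 0 & 0 & 0 & a_2 \\ 0 & 0 & -a_2 & 0 \end{pmatrix};
\]
such a basis exists by the elementary divisors theorem for integral skew-symmetric forms. Using it, identify $A[n+1]=\frac{1}{n+1}H_1(A;\ZZ)/H_1(A;\ZZ)\cong(\ZZ/(n+1))^4$. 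I claim that, in this basis and its dual on $\wh{A[n+1]}$, the homomorphism $E_L$ is given by reduction of $\Omega$ modulo $n+1$. To establish this, I would, as in the proof of Proposition~\ref{prp:compatibili}, deform $(A,\ell)$ through a connected family to the product $(C_1\times C_2,\,p_1^{*}(a_1\eta_1)+p_2^{*}(a_2\eta_2))$; Remark~\ref{rmk:variacom} ensures $E_L$ is preserved under such a deformation. In this product case the line bundle $L'=\cO_{C_1}((n+1)a_1 p_1)\boxtimes\cO_{C_2}((n+1)a_2 p_2)$ realizes $(n+1)$ times the deformed class, the commutator pairing factors as a product of the Weil pairings $e^{L_i'}$ on $C_i[(n+1)a_i]$, and a direct analytic calculation (using $C_i=\CC/\Lambda_i$ and $C_i[m]=\frac{1}{m}\Lambda_i/\Lambda_i$) shows that the restriction of $e^{L_i'}$ to $C_i[n+1]\cong(\ZZ/(n+1))^2$ is the $a_i$-th power of the standard Weil pairing of order $n+1$. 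Assembling the two factors yields exactly $\Omega\bmod(n+1)$.

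Finally, the image of $\Omega\bmod(n+1)$ in $(\ZZ/(n+1))^4$ is $(a_1\ZZ/(n+1))^2\oplus(a_2\ZZ/(n+1))^2$, whose cokernel is $(\ZZ/\gcd(n+1,a_1))^2\oplus(\ZZ/\gcd(n+1,a_2))^2=(\ZZ/b_1)^2\oplus(\ZZ/b_2)^2$, proving~\eqref{cirocirillo}. The main subtle point is the normalization of the Weil pairing restriction so that it really is represented by $\Omega\bmod(n+1)$ and not some scalar multiple; this becomes a routine elliptic-curve computation once the deformation reduction to the product case has been made.
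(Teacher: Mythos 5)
Your proof is correct and follows essentially the same route as the paper: both reduce via Proposition~\ref{prp:compatibili} to the restriction of $e^{(n+1)\ell}$ to $A[n+1]$, put the pairing in standard symplectic form adapted to the elementary divisors $a_1\mid a_2$ (so that on the $i$-th hyperbolic summand it is the $a_i$-th power of a primitive $(n+1)$-th root of unity), and read off the cokernel as $(\ZZ/(b_1))^2\oplus(\ZZ/(b_2))^2$. The paper simply invokes Mumford's classical normal form $e^{L}(\alpha_i,\beta_i)=\epsilon_i^{b_i}$ for the commutator pairing of a line bundle on an abelian surface, so your additional deformation to a product of elliptic curves, while valid, is an unnecessary extra step.
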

\begin{proof}
By Proposition~\ref{prp:compatibili} the commutator pairing $e^L$ is equal to the restriction of $e^{(n+1)\ell}$ to $A[n+1]$. 
We have an orthogonal direct sum decomposition
\begin{equation*}
K((n+1)\ell)\cong (\ZZ/(a_1(n+1)))^2\oplus_{\bot} (\ZZ/(a_2(n+1)))^2,
\end{equation*}
and an adapted basis $\{\wt{\alpha_1},\wt{\beta_1},\wt{\alpha_2},\wt{\beta_2}\}$ such that 
$e^{(n+1)\ell}(\wt{\alpha_i},\wt{\beta_i})=\epsilon_{a_i(n+1)}$ is a primitive $a_i(n+1)$-th root of $1$. We may choose the basis such that  $\epsilon^{a_1}_{a_1(n+1)}=\epsilon^{a_2}_{a_2(n+1)}$, call it $\epsilon$. Of course 
$\epsilon$  is a primitive $(n+1)$-th root of $1$. Let $\alpha_i:=a_1\wt{\alpha}_i$ and  $\beta_i:=a_i\wt{\beta}_i$.  
Then $\{\alpha_1,\beta_1,\alpha_2,\beta_2\}$ is a basis of $A[n+1]$, and 
$e^{(n+1)\ell}(\wt{\alpha_i},\wt{\beta_i})=\epsilon^{a_i}$.
The isomorphism in~\eqref{cirocirillo} follows at once.
\end{proof}
\begin{prp}\label{prp:nonconta}
Let $A$ be a compact complex torus and let $L$ a line bundle on $K_n(A)$ such that $c_1(L)$ is a multiple of 
$\delta_n$. The commmutator pairing of $L$ is trivial.
\end{prp}
\begin{proof}
The linear system 
$|\Delta_n|$  consists of the single divisor $\Delta_n$ and hence
 the commmutator pairing of $\cO_{K_n(A)}(\Delta_n)$   is trivial by Remark~\ref{rmk:unasez}. This proves that $e^L$ is trivial if $c_1(L)=x\delta_n$ with $x$ even but not for odd $x$ (well, it does if $n$ is even because in that case there are no non trivial $2$ torsion elements of $A[n+1]$). Now suppose that $c_1(L)=\delta_n$. Let $\wt{K}_n(A)\to K_n(A)$ be the double cover ramified over $\Delta_n$. Then 
\begin{equation*}
\rho_{*}(\cO_{\wt{K}_n(A)})=\cO_{K_n(A)}\oplus L^{-1} 
\end{equation*}
and $L^{-1}$ is the $(-1)$ eigensheaf for the natural  action of the covering involution.
It follows that in order to prove 
  that $e^L$ is trivial it suffices to lift the action of $T(K_n(A))$ on $K_n(A)$ to an action on $\wt{K}_n(A)$ (recall Remark~\ref{rmk:solltriv}). This is done by considering the isospectral Hilbert scheme $X_{n+1}(A)$ obtained by blowing up the big diagonal in $A^{n+1}$ and the finite map $X_{n+1}(A)\to A^{[n+1]}$, see~\cite{haiman}. Let $Y_n(A)\subset X_{n+1}(A)$ be the inverse image of $K_n(A)\subset A^{[n+1]}$.
   The action of the permutation group 
  $\cS_{n+1}$ on $A^{n+1}$ lifts to an action on  $X_{n+1}(A)$ and also on $Y_n(A)$. The double cover
   $\wt{K}_n(A)\to K_n(A)$ is identified with the double cover
\begin{equation*}
Y_n(A)/ \cA_{n+1} \lra K_n(A)
\end{equation*}
where  $\cA_{n+1}\lhd\cS_{n+1}$ is the alternating group. The group of translations of $A$ acts on $A^{n+1}$ and it maps the big diagonal to itself, hence the action lifts to an action on $X_{n+1}(A)$. Note that the action commutes with the permutation action.
The subgroup of translations in $A[n+1]$  acts on $Y_n(A)$ and hence also on $Y_n(A)/ \cA_{n+1}$.  (Note that we do not need to go through the highly non trivial results of Haiman. In fact it suffices to work away from the codimension $2$ subset of $K_n(A)$ parametrizing subschemes $Z$  such that  $\gh(Z)$   has a point of multiplicity greater than $2$, where the statement about $X_{n+1}(A)$ being the blow up of the big diagonal is elementary.)
\end{proof}
\subsection{Proof of Theorem~\ref{thm:compairkum}}
\setcounter{equation}{0}
By  Remark~\ref{rmk:variacom} and Proposition~\ref{prp:defpol} we may assume that $X=K_n(A)$ and 
$c_1(L)=\mu_n(\ell)+x\delta_n$, where $\ell\in\NS(A)$ and $x\in\ZZ$. In this case the statement of 
Theorem~\ref{thm:compairkum} holds by Corollary~\ref{crl:accoppiamento} and Proposition~\ref{prp:nonconta}
\qed

\section{The commutator pairing for HK manifolds of type OG6}\label{sec:contocomm}
\setcounter{equation}{0}
\subsection{Main result}
\setcounter{equation}{0}
Before stating the  main result, we note that if $X$ is a HK manifold of type $\OG$ then   a primitive element of 
$H^2(X;\ZZ)$ has divisibility $1$ or $2$ (see~\cite{rapagnetta} or~\eqref{reticolog6}).
\begin{thm}\label{thm:compairog6}
Let $X$ be a HK manifold of type $\OG$, and let $L$ be a primitive line bundle on $X$.   
\begin{enumerate}
\item
If $\divisore(L)=1$ and $q_X(L)$ is not divisible by $4$ then  $e^L$ is non degenerate.
\item
If $\divisore(L)=1$ and $q_X(L)$ is divisible by $4$ then  $\wh{T}(X)/\im(E_L)\cong (\ZZ/(2))^4$.
\item
If $\divisore(L)=2$  then $e^L$ is trivial.
\end{enumerate}
\end{thm}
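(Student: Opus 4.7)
The approach mirrors the proof of Theorem~\ref{thm:compairkum} in Section~\ref{sec:compkum}, with an extra layer of bookkeeping. First, by Remarks~\ref{rmk:stessocomm} and~\ref{rmk:variacom}, the isomorphism class of the cokernel $\wh{T}(X)/\im(E_L)$ is both a birational and a deformation invariant of the pair $(X,L)$. The plan is therefore to establish an OG6 analog of Proposition~\ref{prp:defpol}: every pair $(X,L)$ as in the statement admits a deformation, keeping $L$ of type $(1,1)$, whose central fiber is birational to a symplectic desingularization $\wt{K}_v(J)$ of the Albanese fiber of the moduli space $\gM_v(J)$, where $J$ is the Jacobian of a genus two curve, $h$ its principal polarization, and $v=(0,2h,-2)$ or $v=(0,2h,0)$. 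As in Lemma~\ref{lmm:ixzero}, this reduces to a Markman--Mongardi-style classification of $\Mon^2$-orbits of primitive classes in the OG6 lattice; the subclaim is that the three cases correspond exactly to orbits that can be realized on one of the two models above.

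Having reduced to $\wt{K}_v(J)$, I would decompose $T(\wt{K}_v(J))\cong(\ZZ/2)^8$ as $J[2]\oplus\wh{J}[2]$, where $J[2]$ acts by translation on $J$ (transported to the moduli space) and $\wh{J}[2]$ acts by tensoring sheaves with a $2$-torsion line bundle; both preserve the Albanese fiber. The Beauville--Mukai system produces a Lagrangian fibration $\pi\colon\wt{K}_v(J)\to |2h|\cong\PP^3$, and $L_0:=\pi^{*}\cO_{\PP^3}(1)$ is the associated Lagrangian line bundle. Under the Beauville--Narasimhan--Ramanan / Fourier--Mukai correspondence, pullback identifies $H^0(\wt{K}_v(J),L_0)=H^0(\PP^3,\cO(1))$ with $H^0(J,\cO_J(2h))$, the Schr\"odinger representation of the Heisenberg group $\cH(2,2)$ attached to $\cO_J(2h)$. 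By Remark~\ref{rmk:unasez}, this reads off a non-degenerate pairing on $J[2]$, exactly as in Proposition~\ref{prp:compatibili}. A dual argument, using a Fourier--Mukai-transformed Lagrangian fibration, gives the non-degenerate pairing on $\wh{J}[2]$; combining these via~\eqref{tenspair} handles case~(1).

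Case~(3), $\divisore(L)=2$, reduces (after monodromy) to $c_1(L)$ proportional to the OG6 analog of the Kummer class $\delta_n$, i.e.~half the class of the exceptional divisor of $\wt{K}_v(J)\to K_v(J)$. I would transcribe the argument of Lemma~\ref{lmm:nonconta}: take the double cover $\rho\colon\wt{X}\to X$ ramified on an effective representative of~$2L$, show that on $\wt{K}_v(J)$ the translation and the tensor actions both preserve the locus of non-locally-free sheaves (which constitutes the exceptional divisor) and hence lift canonically to $\wt{X}$, and conclude by Remark~\ref{rmk:solltriv} that $e^L$ is trivial. Case~(2) is the intermediate situation: the constraints $\divisore(L)=1$ and $4\mid q_X(L)$ should force, after monodromy, a decomposition $c_1(L)=c_1(L')+c_1(L'')$ with $L'$ of the form handled in case~(1) and $L''$ of divisibility~$2$ handled in case~(3); the tensor formula~\eqref{tenspair} then shows that exactly one of the summands $J[2]$, $\wh{J}[2]$ pairs non-degenerately, producing the cokernel $(\ZZ/2)^4$.

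The main obstacle will be the $\wh{J}[2]$ half of $T(X)$. Unlike the Kummer situation, where $T(X)=A[n+1]$ is entirely realized by translations of the underlying abelian surface, here the extra $(\ZZ/2)^4$ is a Mukai-dual action whose effect on a fixed Lagrangian fibration is less geometrically transparent; one must verify that it permutes the fibers of $\pi$ up to a line-bundle twist and that its commutator with the $J[2]$-action on the Schr\"odinger representation is the expected Weil pairing on $J[2]\times\wh{J}[2]$. Equally delicate is the lattice bookkeeping in case~(2): one needs a canonical monodromy representative of primitive classes of divisibility~$1$ with $4\mid q_X(L)$ that realizes the desired decomposition $L\cong L'\otimes L''$, which again reduces to an orbit count in the OG6 lattice.
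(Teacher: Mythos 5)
Your overall route --- deformation plus birational invariance, reduction to the moduli models $\wt{K}_v(J)$, Schr\"odinger representations on sections of Lagrangian line bundles for divisibility $1$, and a lift to a double cover for divisibility $2$ --- is the one the paper takes, and your treatment of case (1) (including the Fourier--Mukai duality needed for the $\wh{J}[2]$ half, which is exactly Proposition~\ref{prp:thaddeus}) is correct in outline. There are, however, three genuine gaps. \emph{First}, case (3) is only half done: by Proposition~\ref{prp:vettog6} there are \emph{two} species of primitive divisibility-$2$ classes, $q_X(L)\equiv -2$ and $q_X(L)\equiv -4\pmod{8}$, and only the first reduces (modulo squares of line bundles, whose pairing is trivial because $T(X)$ is $2$-torsion) to the half-exceptional class $\alpha_v$. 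The second reduces to $\beta_v-\alpha_v$, the class of the rigid divisor of non-locally-free sheaves transported from the $(2,0,-2)$ model, and its pairing is trivial for an entirely different reason, namely $h^0(\cL_v)=1$ (Proposition~\ref{prp:ellevu}) combined with Remark~\ref{rmk:unasez}; this orbit is absent from your proposal. \emph{Second}, in the orbit you do treat, ``the actions preserve the exceptional divisor and hence lift canonically'' is circular: preserving the branch divisor lets each element of $T(X)$ lift individually, but whether the \emph{group} lifts is precisely what the commutator pairing obstructs (Remark~\ref{rmk:solltriv}). The paper must construct the lift by identifying the double cover $\wt{Y}_v(J)$ birationally with $M_w(S)$ for the Kummer K3 surface $S$, lifting translation and tensoring there (the latter via the norm line bundle $\Nm_\nu(\mu^{*}\cL_y)$), and checking compatibility through the Prym/norm identity of Lemma~\ref{lmm:piripicchio}; the isospectral-Hilbert-scheme device of Lemma~\ref{lmm:nonconta} has no direct transcription to moduli of sheaves.

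\emph{Third}, your mechanism for case (2) cannot work as stated: if $L\cong L'\otimes L''$ with $e^{L'}$ non-degenerate (a case-(1) bundle) and $e^{L''}$ trivial (a divisibility-$2$ bundle), then \eqref{tenspair} forces $e^{L}=e^{L'}$ to be non-degenerate, contradicting the cokernel $(\ZZ/(2))^4$ you are trying to produce. The decomposition that actually works (Lemma~\ref{lmm:circeo} and Corollary~\ref{crl:casoab}) is $c_1(L)=a\wt{\Lambda}_v+b\,\cl(\wt{\Theta}_v)$ into the two \emph{isotropic} Lagrangian classes, each of whose pairings is itself degenerate, with kernels $\wh{J}[2]$ and $J[2]$ respectively; since $q_X(L)=2ab$, the condition $4\mid q_X(L)$ holds exactly when one of $a,b$ is even, which kills one of the two factors and yields the $(\ZZ/(2))^4$ cokernel, while $a,b$ both odd gives the non-degeneracy of case (1).
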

The proof for the case $\divisore(L)=2$ is in Subsection~\ref{subsec:theend},  the proof for the case
$\divisore(L)=1$ is in Subsection~\ref{sec:divuno}. 

\begin{rmk}
Theorem~\ref{thm:heisenog6} follows at once from Theorem~\ref{thm:compairog6}.
\end{rmk}
\begin{rmk}
Let $X$ be a HK manifold of type  $\OG$, and let $L$ be an ample primitive line bundle on $X$. Let $q_X(L)=2e$. By Kodaira vanishing and  Huybrechts' HRR formula for HK manifolds of  type  $\OG$, we have
\begin{equation}\label{akihiko}
h^0(X,L)=4\cdot{e+3\choose 3}.
\end{equation}
Now suppose that the commutator pairing of $L$ is non degenerate, i.e.~that the hypotheses of Theorem~\ref{thm:heisenog6} hold. Then $H^0(X,L)$ is isomorphic to a direct sum of copies of the Heisenberg representation $\cH(2,2,2,2)$. Since $\cH(2,2,2,2)$ has dimension $16$, it follows that $h^0(X,L)$ must be a multiple of $16$. An elementary argument confirms that this is the case. Moreover we get that  $H^0(X,L)$  is the  Heisenberg representation if and only if $e=1$, i.e.~$q_X(L)=2$.
\end{rmk}

\subsection{Preliminaries on HK manifolds of  type OG6}\label{subsec:varog6}
\setcounter{equation}{0}
HK manifolds of type OG6 are $6$-dimensional and they belong to a single deformation class. The first examples where constructed by the author in~\cite{og6} as  symplectic desingularizations of an Albanese fiber of a suitably chosen singular moduli spaces of semistable sheaves of rank $2$ on Jacobians of genus $2$ curves. M.~Lehn and C.~Sorger~\cite{lehn-sorg-singog} examined in detail the singularities of the relevant moduli spaces and
  proved that one can construct a similar symplectic desingularization of an Albanese fiber of moduli spaces of semistable sheaves on an abelian surface if the Mukai vector is of the form $v=2w$ where $w^2=2$ (the square is with respect to the Mukai pairing). 
Rapagnetta and Perego~\cite{per-rap-ogms}  proved  that all HK varieties obtained this way 
are deformation equivalent, and Rapagnetta~\cite{rapagnetta}  determined their BBF quadratic form.   Following is a more detailed exposition. Let $A$ be an abelian surface, and let  
\begin{equation}
\wt{H}(A;\ZZ):=H^0(A;\ZZ)\oplus H^2(A;\ZZ)\oplus H^4(A;\ZZ)
\end{equation}
be its Mukai lattice, where the Mukai quadratic form is defined by $(r,\alpha,s)^2:=\alpha^2-2rs$. Let 
\begin{equation}
v_0=(r_0,\alpha_0,s_0)\in 
\wt{\NS}(A)=H^0(A;\ZZ)\oplus \NS(A)\oplus H^4(A;\ZZ)
\end{equation}
be a Mukai vector  such that  $r_0\ge 0$ and moreover $\alpha$ is effective (non zero) if $r_0=0$. We assume that $v_0^2=2$. Let $v:=2v_0$ and let $h$ be a $v$-generic polarization of $A$ (if the Picard number of $A$ is $1$, then any polarization of $A$ is  $v$-generic). The moduli space $M_{v}(A,h)$ of $h$ Gieseker-Maruyama semistable sheafs $\cF$ on $A$ with 
$\ch(\cF)=v$ is irreducible of dimension $10$. There is an embedding of the symmetric square of $M_{v_0}(A,h)$ into  
$M_{v}(A,h)$
\begin{equation*}
\begin{matrix}
M_{v_0}(A,h)^{(2)} & \hra & M_{v}(A,h) \\
[\cF_1]+[\cF_2] & \mapsto & [\cF_1\oplus\cF_2]
\end{matrix}
\end{equation*}
whose image (of dimension $8$)  is the singular locus of  $M_{v}(A,h)$. 
Let  $\wt{M}_{v}(A,h)\to M_{v}(A,h)$ be the blow up of the singular locus $\Sigma_v(A,h)$. Then $\wt{M}_{v}(A,h)$ is smooth 
with Albanese variety isomorphic to $A\times A^{\vee}$. Let  $\wt{K}_{v}(A,h)$ be a fiber of 
(a chosen)  Albanese map $\wt{M}_{v}(A,h)\to A\times A^{\vee}$; then  $\wt{K}_{v}(A,h)$ is a HK variety of type OG6. 

Next we describe $H^2(\wt{K}_{v}(A,h);\ZZ)$ and the BBF quadratic form. Let $K_v(A,h)\subset M_v(A,h)$ be the image of
 $\wt{K}_{v}(A,h)$ under the blow up (deingularization) map.  First there is the Donaldson-Mukai-Le Potier homomorphism
\begin{equation}
\theta_v\colon v^{\bot}\lra H^2(K_v(A,h);\ZZ),
\end{equation}
where $ v^{\bot}\subset \wt{H}(A;\ZZ)$ is the orthogonal of $v$ (i.e.~of $v_0$) with respect to the Mukai quadratic form. 
There are some choices to be made in defining $\theta_v$; we choose to follow the definition in \cite{Yos:moduli}, see~(1.6) op.~cit.
The pull back via the blow up map $\wt{K}_{v}(A,h)\to K_{v}(A,h)$ defines an  isometry  
\begin{equation}
\wt{\theta}_v\colon v^{\bot}\lra H^2(\wt{K}_v(A,h);\ZZ),
\end{equation}
onto a saturated sublattice of $H^2(\wt{K}_v(A,h);\ZZ)$. The orthogonal  complement is generated by a class $\alpha_v$ of BBF square $(-2)$ 
such that $2\alpha_v$ is the  Poincar\`e dual of the exceptional divisor $\wt{\Sigma}_v(A,h)$ of the blow up map 
$\wt{K}_{v}(A,h)\to K_{v}(A,h)$. Moreover  $H^2(\wt{K}_{v}(A,h);\ZZ)$ is generated over $\ZZ$ by  $\im(\wt{\theta}_v)$ and $\alpha_v$.
The upshot is that we have an isometry of lattices (see Theorem 3.1 in~\cite{per-rap-fact})
\begin{equation}\label{reticolog6}
\begin{matrix}
v^{\bot}\oplus_{\bot}(-2) & \overset{\sim}{\lra} & H^2(\wt{K}_{v}(A,h);\ZZ) \\
(x,t) & \mapsto & \wt{\theta}_v(x)+t\alpha_v
\end{matrix}
\end{equation}
In particular, if $X$ is HK manifold of type OG6, then 
$H^2(X;\ZZ)$ equipped with the BBF quadratic form is isometric to the lattice
\begin{equation}\label{retog6}
\Lambda_{\OG}:= U_1\oplus_{\bot} U_2\oplus_{\bot} U_3\oplus_{\bot} \la g_1\ra\oplus_{\bot} \la g_2\ra.
\end{equation}
where each $U_i$ is a hyperbolic plane, and $(g_j,g_j)=-2$ for $j\in\{1,2\}$. 
We record here the following fact.
\begin{prp}\label{prp:vettog6}
If $a\in \Lambda_{\OG}$ is primitive 
(in particular non zero), then one of the following hold:
\begin{enumerate}
\item[(I)] 
$\divisore(a)=1$,
\item[(II)]
$\divisore(a)=2$ and $(a,a)\equiv -2\pmod{8}$,
\item[(III)] 
$\divisore(a)=2$ and $(a,a)\equiv -4\pmod{8}$.
\end{enumerate}
\end{prp}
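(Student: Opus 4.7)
The plan is to combine a discriminant-group calculation with a direct computation modulo $8$, using the explicit description of $\Lambda_{\OG6}$ in~\eqref{retog6}.

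First I would compute the discriminant group $\Lambda_{\OG6}^{\vee}/\Lambda_{\OG6}$. Since each hyperbolic plane $U_i$ is unimodular, only the two rank-one summands $\langle g_j \rangle$ contribute, so the discriminant group is generated by the classes of $g_1/2$ and $g_2/2$ (each nontrivial of order $2$, since $(g_j,g_j)=-2$), and is therefore isomorphic to $(\ZZ/2)^2$. A standard argument using the primitivity of $a$ shows that $\divisore(a)$ equals the order of the class of $a/\divisore(a)$ in the discriminant group; hence $\divisore(a)$ divides the exponent $2$, giving $\divisore(a)\in\{1,2\}$ and in particular case (I) when $\divisore(a)=1$.

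For $\divisore(a)=2$ I would decompose $a = u + x_1 g_1 + x_2 g_2$ with $u \in U_1 \oplus U_2 \oplus U_3$ and $x_j\in\ZZ$. Pairing $a$ with a standard isotropic basis $e_i,f_i$ of each $U_i$ gives the coefficients of $u$, so $\divisore(a)=2$ is equivalent to $u=2u'$ for some $u'\in U_1\oplus U_2\oplus U_3$; the pairings $(a,g_j)=-2x_j$ are automatically even and impose no further constraint. Primitivity of $a$ then translates into the condition that at least one of $x_1,x_2$ be odd.

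The last step is a congruence computation. Expanding
\[
(a,a) = 4(u',u') - 2 x_1^2 - 2 x_2^2,
\]
one sees that the diagonal values of the form on each hyperbolic plane lie in $2\ZZ$ (namely $2\alpha_i\beta_i$ on $\alpha_i e_i+\beta_i f_i$), so $4(u',u')\equiv 0\pmod 8$. Using the elementary identity $x^2\equiv 1\pmod 8$ for odd $x$, the term $-2x_j^2$ contributes $-2\pmod 8$ when $x_j$ is odd and $0\pmod 8$ when $x_j$ is even. Together with the primitivity constraint, this yields $(a,a)\equiv -2\pmod 8$ when exactly one of $x_1,x_2$ is odd (case (II)) and $(a,a)\equiv -4\pmod 8$ when both are odd (case (III)), exhausting all possibilities. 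There is no genuine obstacle in this argument; the only point requiring a moment of care is the equivalence between $\divisore(a)$ and the order of the class of $a/\divisore(a)$ in the discriminant group.
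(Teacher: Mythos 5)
Your proof is correct and complete. The paper itself states this proposition without proof (it is introduced with ``We record here the following fact''), so there is no argument in the text to compare against; your write-up supplies the standard one. Each step checks out: the discriminant group of $U_1\oplus U_2\oplus U_3\oplus\la g_1\ra\oplus\la g_2\ra$ is indeed $(\ZZ/2)^2$ generated by the classes of $g_1/2$ and $g_2/2$; for primitive $a$ the class $a/\divisore(a)$ lies in $\Lambda_{\OG6}^{\vee}$ and has order exactly $\divisore(a)$ in the discriminant group (primitivity gives $\Lambda_{\OG6}\cap\QQ a=\ZZ a$), so $\divisore(a)\in\{1,2\}$; and when $\divisore(a)=2$ the decomposition $a=2u'+x_1g_1+x_2g_2$ with at least one $x_j$ odd, together with $4(u',u')\equiv 0\pmod 8$ (the hyperbolic form is even) and $-2x_j^2\equiv -2$ or $0\pmod 8$ according to the parity of $x_j$, yields exactly cases (II) and (III). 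The only minor remark is that $x^2\equiv 1\pmod 4$ for odd $x$ already suffices where you invoke $x^2\equiv 1\pmod 8$, but this costs nothing.
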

\subsection{Varieties of type OG6 corresponding to $v=(0,2h,-2)$}\label{subsec:og6rap}
\setcounter{equation}{0}
Let $J$ be an abelian surface with a  principal polarization $h$. We make the following assumption:
\begin{equation}\label{picarduno}
\NS(J)=\ZZ h.
\end{equation}
In particular $J$ is the Jacobian of a (smooth projective) curve $C$ of genus $2$.

Let  $v=(0,2h,-2)$. We denote the moduli space $M_v(J,h)$ by $M_v(J)$ (there is a unique polarization).
Points of $M_v(J)$  parametrize $h$-semistable sheaves $\iota_{C,*}(\xi)$ where $\iota_{C}\colon C\hra J$ is the inclusion of a curve with Poincar\'e dual $2h$ and $\xi$ is a pure sheaf on $C$ of degree $2$. 

 Let $\Pic^{2h}(J)$ be the component of the Picard scheme of $J$ parametrizing line bundles $\cL$ with $c^B_1(\cL)=2h$, where $c^B_1(\cL)\in H^{2}(J;\ZZ)$ is the first Chern class in Betti cohomology. Let $\sigma\colon CH_2(J)\to J$ be the homomorphism defined (at the level of cycles)  by 
$\sigma\left(\sum_i m_i (x_i)\right):=\sum_i m_i x_i$, where the first sum is a formal sum, while the second one is the sum in the group $J$.
Consider the map
\begin{equation}\label{franbei}
\begin{matrix}
M_v(J) & \overset{\alb}{\lra} & \Pic^{2h}(J)\times J \\
[\cF] & \mapsto & (c_1(\cF), \sigma(c_2(\cF))
\end{matrix}
\end{equation}
where Chern classes are taken in the Chow ring of $J$ and we identify $\CH^1(J)$ with $\Pic(J)$. An Albanese fibration of 
$\wt{M}_v(J)$,  denote it by $\Alb$, is provided by the composition 
$\wt{M}_v(J)\lra M_v(J)\overset{\alb}{\lra} \Pic^{2h}(J)\times J $.
Now let 
$\Theta_J$ be a symmetric principal polarization of $J$ with cohomology class $h$, and let
\begin{equation}\label{eccokappa}
K_v(J) :=\alb^{-1}([2\Theta_J],0),\quad \wt{K}_v(J) :=\Alb^{-1}([2\Theta_J],0).
\end{equation}
The HK variety $\wt{K}_v(J)$ is examined in detail in the papers~\cite{rapagnetta-og6} and~\cite{monrapsac-og6}. 
To be precise in~\cite{monrapsac-og6} the Mukai vector is $(0,2h,2)$, but tensorization by $\cO_J(\Theta_J)$ defines an isomorphism between the moduli spaces $M_{(0,2h,-2)}(J)$ and $M_{(0,2h,2)}(J)$. 

Here we collect results  that will be needed later. 
Let $\beta_v,\gamma_v\in \NS(\wt{K}_v(J))=\NS(\wt{K}_v(J))$ be defined by
\begin{equation}
\beta_v:=\wt{\theta}_v(2,-h,1),\qquad \gamma_v:=\wt{\theta}_v(2,-h,0).
\end{equation}
By the isometry in~\eqref{reticolog6} and the equality in~\eqref{picarduno}, we have an orthogonal direct sum decomposition
\begin{equation}\label{abigamma}
\NS(\wt{K}_v(J))=\ZZ\alpha_v\oplus \ZZ\beta_v \oplus  \ZZ\gamma_v.
\end{equation}
Moreover we have
\begin{equation}\label{quadrati}
(\alpha_v,\alpha_v)=-2,\quad  (\beta_v,\beta_v)=-2,\quad (\gamma_v,\gamma_v)=2.
\end{equation}
The geometric meaning of class the $\alpha_v$ (or rather $2\alpha_v$) is clear by definition.  
We give geometric realizations of the classes $\gamma_v-\beta_v$ and $\beta_v-\alpha_v$. 
Since sheaves parametrized by $K_v(J)$ have rank $0$, we have 
 a  map 
\begin{equation}
\begin{matrix}
K_v(J) & \overset{\pi_v}{\lra} & |2\Theta_J|\cong\PP^3\\
[\cF] & \mapsto & \Supp_{\det}(\cF)
\end{matrix}
\end{equation}
where $\Supp_{\det}(\cF)$ (the determinantal support of $\cF$) is the curve defined by the $0$-th Fitting ideal of $\cF$ (and is well-defined also for properly semistable sheaves). 
Let 
\begin{equation}
\wt{\pi}_v\colon \wt{K}_v(J) \to |2\Theta_J|\cong\PP^3
\end{equation}
 be the  composition of  the desingularization map and $\pi_v$. Then $\wt{\pi}_v$ is a Lagrangian fibration. By~\cite{LePotier:Fitting} (see Section 2.3) we have that 
\begin{equation}
\gamma_v-\beta_v=\wt{\theta}_v(0,0,-1)=c_1\left(\pi_v^*\cO_{\PP^3}(1)\right).
\end{equation}
The geometric meaning of  the class $\beta_v-\alpha_v$ is contained in the proof of the result below. 
\begin{prp}\label{prp:ellevu}
The  line bundle  $\cL_v$  such that   $c_1(\cL_v)=\beta_v-\alpha_v$ has a unique global section up to rescaling.
\end{prp}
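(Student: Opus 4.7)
The plan is to produce one explicit global section of $\cL_v$ and then show by a negativity argument that no two sections are linearly independent.

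For existence, I would exhibit an effective divisor in the class $\beta_v-\alpha_v$ via a Brill–Noether construction. The Mukai vector $(2,-h,1)$ is realised by the rank-$2$ sheaf $F_0 := \cO_J\oplus\cO_J(-\Theta_J)$ on $J$: indeed $v(\cO_J)+v(\cO_J(-\Theta_J))=(1,0,0)+(1,-h,1)=(2,-h,1)$. A short Mukai-pairing computation gives $\chi(F_0,\cF)=-\langle v(F_0),v\rangle=0$ for every $\cF$ parametrised by $K_v(J)$, so the jumping locus
\begin{equation*}
D_{F_0}:=\{[\cF]\in K_v(J)\mid \Hom(F_0,\cF)\neq 0\}
\end{equation*}
is, provided it is proper, a divisor of class $\theta_v(2,-h,1)$ by the standard Donaldson–Mukai/Le Potier formula. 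A typical polystable point $\cF_1\oplus\cF_2\in\Sigma_v$ has each summand of the form $\iota_{C_i,*}\xi_i$ with $\deg\xi_i=0$ on $C_i\in|\Theta_J|$ of arithmetic genus $2$, and Riemann–Roch on $C_i$ gives $\dim\Hom(F_0,\cF_i)=1$ generically (the nontrivial contribution comes from the $\cO_J(-\Theta_J)$ summand, which after twisting acquires degree $2=g(C_i)$), so $\dim\Hom(F_0,\cF_1\oplus\cF_2)=2$ along $\Sigma_v$. Using the Lehn–Sorger local model of the singularity of $M_v(J)$ at such a polystable sheaf, together with the fact that $[\wt\Sigma_v]=2\alpha_v$, one identifies the zero scheme on $\wt K_v(J)$ of the tautological section defining $D_{F_0}$ with a reduced effective divisor of class $\wt\theta_v(2,-h,1)-\alpha_v=\beta_v-\alpha_v$.

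For uniqueness I would invoke negativity of $q(\cL_v)$. From~\eqref{quadrati} one has $(\beta_v-\alpha_v,\beta_v-\alpha_v)=-2+(-2)=-4$, and $\beta_v-\alpha_v$ is primitive in $\NS(\wt K_v(J))$ by the orthogonal decomposition~\eqref{abigamma}. On any HK manifold a primitive class with $q<0$ carries at most one effective representative: if $|\cL_v|$ contained a pencil, its moving part would have non-negative BBF square by a Fujiki/Beauville argument, and subtracting the effective fixed part would give $q(\cL_v)\geq 0$, a contradiction. Together with the existence step this forces $h^0(\wt K_v(J),\cL_v)=1$.

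The main obstacle is the multiplicity bookkeeping along $\Sigma_v$: one has to track the precise order of vanishing of the universal $\Hom$-section on the Kuranishi slice transverse to the polystable locus, so as to land on the coefficient $-1$ of $\alpha_v$ and not $0$ or $-2$. A cleaner alternative would be to exhibit the unique section directly in the explicit model of $\wt K_v(J)$ developed in~\cite{rapagnetta-og6} and~\cite{monrapsac-og6} — for instance via a natural section arising from the Lagrangian fibration $\wt\pi_v$ restricted to a special $\PP^2\subset|2\Theta_J|$ of reducible curves, or via a Fitting-support analysis on the universal family — and then to identify the resulting class cohomologically by pairing with the generators $\alpha_v,\beta_v,\gamma_v$ of $\NS(\wt K_v(J))$.
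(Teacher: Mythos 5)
Both halves of your argument have genuine gaps. For existence, your specific choice $F_0=\cO_J\oplus\cO_J(-\Theta_J)$ cannot work: for \emph{every} $[\cF]\in K_v(J)$ with $\cF=\iota_{C,*}\xi$, $C\in|2\Theta_J|$ of genus $5$ and $\deg\xi=2$, one has $\Hom(\cO_J(-\Theta_J),\cF)=H^0(C,\xi\otimes\cO_C(\Theta_J))$ with $\deg\bigl(\xi\otimes\cO_C(\Theta_J)\bigr)=6$ and hence $\chi=2>0$, so $\Hom(F_0,\cF)\neq 0$ identically and your ``jumping locus'' is all of $K_v(J)$, not a divisor. (Equivalently: $\Ext^2(F_0,\cF)=\Hom(\cF,F_0)^{\vee}=0$ since $\cF$ is torsion, so $\chi(F_0,\cF)=0$ forces $\hom=\ext^1\geq 2$ everywhere.) One would need a \emph{generic stable} bundle with Mukai vector $(2,-h,1)$ and a generic-vanishing statement, which is exactly the hard part, on top of the multiplicity bookkeeping along $\Sigma_v$ that you yourself flag. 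For uniqueness, the principle ``a primitive class with $q<0$ carries at most one effective representative'' is false: already on a K3 with an elliptic fibration, the fiber class $E$ plus an orthogonal $(-2)$-curve $C$ sitting in a reducible fiber gives a primitive class of square $-2$ with $h^0\geq 2$. What is true is that a \emph{prime} divisor of negative BBF square is rigid; your $D_{F_0}$ is not known to be irreducible, and your deduction ``$q(M)\geq 0$ for the moving part, hence $q(\cL_v)\geq 0$'' does not follow, since $q(F)$ and $q(M,F)$ are uncontrolled.

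The paper avoids both issues by working with an explicit prime divisor: it takes the locus $\wt{B}\subset\wt{K}_w(J)$, $w=(2,0,-2)$, of stable non-locally-free sheaves, transports it by Rapagnetta's birational map $\tau$ to $\wt{K}_v(J)$, and uses the known intersection numbers $(\cl(\wt{B}),\cl(\wt{B}))=-4$, $(\cl(\wt{B}),\alpha_w)=2$ together with the rank-$3$ lattice $\NS(\wt{K}_v(J))=\ZZ\alpha_v\oplus\ZZ\beta_v\oplus\ZZ\gamma_v$ to force $\cl(\tau_*(\wt{B}))=\pm\beta_v-\alpha_v$, fixing the sign by pairing with the movable class $\gamma_v-\beta_v$. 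Rigidity is then proved directly: $\wt{B}$ is generically a conic fibration (fibers of the Uhlenbeck map), and by adjunction the normal bundle restricted to a conic fiber is $\omega_C$, of negative degree, so $\wt{B}$ does not move. If you want to keep a Brill--Noether flavour, you would at minimum have to replace $F_0$ by a general stable bundle, prove the resulting divisor is proper and irreducible, and then invoke rigidity of prime divisors of negative square rather than the false general principle.
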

\begin{proof}
Let $w=(2,0,-2)$. Let $\wt{K}_w(2,0,-2)$ be the Albanese fiber of $\wt{\cM}_w(2,0,-2)$ lying over the point $(0,[\cO_J])\in \times \wh{J}$. 
In~\cite{rapagnetta-og6} one finds the definition of a birational map
\begin{equation}
\tau\colon \wt{K}_w(2,0,-2)\dra \wt{K}_v(0,2h,-2)
\end{equation}
such that
\begin{equation}\label{delstab}
\tau^{*}(\alpha_v)=\alpha_w.
\end{equation}
Let $\wt{B}\subset \wt{K}_w(2,0,-2)$ be the prime divisor whose generic point parametrizes a stable non locally free sheaf  on $J$. By Theorem~3.5.1 in op.cit.~we have 
\begin{equation}
(\cl(\wt{B}),\cl(\wt{B}))=-4,\qquad (\cl(\wt{B}),\alpha_w)=2.
\end{equation}
Hence by~\eqref{delstab} we get that
\begin{equation}
(\cl(\tau_{*}(\wt{B})),\cl(\tau_{*}(\wt{B})))=-4,\qquad (\cl(\tau_{*}(\wt{B})),\alpha_v)=2.
\end{equation}
It follows that 
\begin{equation}
(\cl(\tau_{*}(\wt{B}))+\alpha_v,\cl(\tau_{*}(\wt{B}))+\alpha_v)=-2,\qquad (\cl(\tau_{*}(\wt{B}))+\alpha_v,\alpha_v)=0.
\end{equation}
The orthogonal decomposition in~\eqref{abigamma}, the \lq\lq squares\rq\rq\ in~\eqref{quadrati}  and a straightforward computation show that $\pm \beta_v$ are the only classes in $\NS(\wt{K}_v(J))$ orthogonal to $\alpha_v$ and of square $-2$. 
Hence $\cl(\tau_{*}(\wt{B}))+\alpha_v=\pm \beta_v$, i.e.~$\cl(\tau_{*}(\wt{B}))=\pm \beta_v-\alpha_v$. Since 
$\tau_{*}(\wt{B})$ is an effective divisor, and $\gamma_v-\beta_v$ is the class of a movable divisor, we have
\begin{equation}
(\cl(\tau_{*}(\wt{B})), \gamma_v-\beta_v)\ge 0,
\end{equation}
and hence $\cl(\tau_{*}(\wt{B}))=\beta_v-\alpha_v$. In order to finish the proof it suffices to show that the divisor
$\wt{B}$ does not move. In fact there is an open dense subset of $\wt{B}$ which is a (smooth) 
conic fibration over a $4$ dimensional locally closed subset of $J\times M_{u}(J)$, where $u=(2,0,-1)$ (the \lq\lq conics\rq\rq\ are the generic fibers of the Uhlenbeck map $M_{w}(J)\to M_{w}(J)^{DUY}$). By adjunction  the restriction  to  a conic fiber $C$ of the normal bundle of $\wt{B}$ is 
the canonical line bundle $\omega_C$, and hence 
$\wt{B}$ does not move.
\end{proof}
\subsection{Translations of $\wt{K}_v(J)$ for $v=(0,2h,-2)$}\label{sec:transrap}
\setcounter{equation}{0}
One defines an action of $J[2]\times\wh{J}[2]$ on $\wt{K}_v(J)$ as follows. 
Let $[\cF]\in K_v(J)$. Let $x\in J[2]$ and let $\tau_x\colon J\to J$ be the translation by $x$. Then $\tau_x(\cF)$ is parametrized by 
a point of $K_v(J)$. In fact $\tau_x(\cF)$ is clearly $h$-semistable, moreover $c_1(\tau_x(\cF))=2\Theta_J$ 
(here and in what follows Chern classes are taken in the Chow ring) because $J[2]$ is the group of translations sending the rational equivalence class of $2\Theta_J$ to itself, and lastly $\sigma(c_2(\tau_x(\cF)))=0$ 
because $\deg c_2(\cF)=2$ (all we need is that $\deg c_2(\cF)$ is  even). Similarly, let $y\in\wh{J}[2]$, and let 
 $\cL_y$ be the corresponding line bundle on $J$;
  then $[\cF\otimes\cL_y]\in K_v(J)$. In fact $\cF\otimes\cL_y$ is clearly $h$-semistable, moreover $c_1(\cF\otimes\cL_y)=c_1(\cF)=2\Theta_J$, and lastly  
\begin{equation*}
\sigma(c_2(\cF\otimes\cL_y))=\sigma(c_2(\cF))+\sigma(c_1(\cF)\cdot c_1(\cL_y))=\sigma(2\Theta_J\cdot c_1(\cL_y))=0.
\end{equation*}
Thus we have an embedding 
\begin{equation*}
\begin{matrix}
J[2]\times\wh{J}[2] & \hra & \Aut(K_v(J)) \\
(x,y) & \mapsto & ([\cF]\mapsto [\tau_x(\cF)\otimes\cL_y])
\end{matrix}
\end{equation*}
Since the action maps the singular locus to itself, and the map $\pi\colon \wt{K}_v(J)\to K_v(J)$ is the blow up of the singular locus, we get an embedding $J[2]\times\wh{J}[2]  \hra  \Aut(\wt{K}_v(J))$. If $x\in J[2]$ 
we let $\lambda_x\colon \wt{K}_v(J)\to \wt{K}_v(J)$ be the  automorphism corresponding to $\tau_x$, and if  
$y\in\wh{J}[2]$ we let $\mu_y\colon \wt{K}_v(J) \to \wt{K}_v(J)$ be the automorphism corresponding to tensorization 
with $\cL_y$. Since one may define similarly  an action of $J\times\wh{J}$ on $\wt{M}_v(J)$, and each such automorphism of $\wt{M}_v(J)$ acts trivially on cohomology (it is homotopic to the identity), and since the homomorphism  $H^2(\wt{M}_v(J))\to H^2(\wt{K}_v(J))$ is surjective, we 
get an embedding 
\begin{equation}\label{taipei}
\begin{matrix}
J[2]\times\wh{J}[2] & \hra & \Aut^0(\wt{K}_v(J)) \\
(x,y) & \mapsto & \lambda_x\circ\mu_y 
\end{matrix}
\end{equation}
By Theorem.~5.2 in~\cite{mon-wand}, the map in~\eqref{taipei} is an isomorphism.
\subsection{The double cover  of $\wt{K}_v(J)$ for $v=(0,2h,-2)$}\label{sec:doppiohilb}
\setcounter{equation}{0}
Since $2\alpha_v=\cl(\wt{\Sigma}_v(J))$, there exists a double cover 
\begin{equation}\label{doppelganger}
\wt{Y}_v(J)\overset{\wt{\epsilon}_v}{\lra} \wt{K}_v(J)
\end{equation}
ramified over $\wt{\Sigma}_v(J)$, and such that 
\begin{equation}\label{autodecomp}
\wt{\epsilon}_{v,*}\left(\cO_{\wt{Y}_v(J)}\right)=\cO_{\wt{K}_v(J)}\oplus\cF_v,
\end{equation}
where $\cF_v$ is a line bundle, $c_1(\cF_v)=-\alpha_v$, and the addends  on the right hand side of~\eqref{autodecomp} are
the $\pm 1$ eigenspaces for the natural action  on the left hand side of the covering involution of $\wt{\epsilon}_v$.  
\begin{prp}\label{prp:sollevo}
The action of $\Aut^0(\wt{K}_v(J))$ on  $\wt{K}_v(J)$ lifts to an  action on $\wt{Y}_v(J)$, i.e.~a group homomorphism 
$\Aut^0(\wt{K}_v(J))\hra\Aut\left(\wt{\Sigma}_v(J)\right)$. 
\end{prp}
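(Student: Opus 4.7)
The plan is to construct the lift moduli-theoretically, in analogy with the isospectral Hilbert scheme argument of Lemma~\ref{lmm:nonconta}. First, I note that every individual $g \in \Aut^0(\wt{K}_v(J))$ admits two lifts to $\wt{Y}_v(J)$, differing by the covering involution: indeed $g$ comes from an automorphism of $K_v(J)$ preserving the singular locus $\Sigma_v(J)$ and therefore preserves $\wt{\Sigma}_v(J)$ setwise. Since $g$ acts trivially on $H^2$, there is an isomorphism $\phi\colon g^*\cF_v \overset{\sim}{\to} \cF_v$, and it can be rescaled so that the induced isomorphism $g^*(\cF_v^{-2}) \overset{\sim}{\to} \cF_v^{-2}$ carries $g^*s$ to $s$, where $s \in H^0(\cF_v^{-2})$ is the canonical section vanishing on $\wt{\Sigma}_v(J)$; this rescaled $\phi$, unique up to sign, gives the two lifts. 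The obstruction to choosing such lifts compatibly for all $g$ is thus a class in $H^2(J[2] \times \wh{J}[2]; \ZZ/2)$, which I aim to trivialize geometrically.

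To do so, I would give a moduli-theoretic realization of $\wt{Y}_v(J)$. Recall that $\Sigma_v(J)$ is the image of the map $M_{v_0}(J)^{(2)} \hookrightarrow M_v(J)$ with $v_0 = (0, h, -1)$, cut by the relevant Albanese condition. Let $W \subset M_{v_0}(J) \times M_{v_0}(J)$ be the corresponding ordered-pair locus: the natural map $W \to \Sigma_v(J)$ is a double cover with $\cS_2$-action by exchange, ramified over the diagonal. The group $G := J[2] \times \wh{J}[2]$ acts on $W$ diagonally, by simultaneously translating and tensoring both factors, and this action commutes with $\cS_2$. Appealing to the Lehn-Sorger~\cite{lehn-sorg-singog}/O'Grady~\cite{og6} local analytic model for the singularities of $M_v(J)$ along $\Sigma_v(J)$, pulling back the desingularization $\wt{K}_v(J) \to K_v(J)$ through $W \to \Sigma_v(J)$ yields a $G$-equivariant double cover of $\wt{K}_v(J)$ ramified over $\wt{\Sigma}_v(J)$. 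By uniqueness of such a double cover (it is characterized by the square-root line bundle $\cF_v$ of $\cO(-\wt{\Sigma}_v(J))$ with the correct algebra structure), this cover must coincide with $\wt{Y}_v(J)$, and the manifest $G$-action on the description furnishes the desired lift.

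The principal technical hurdle is this last identification, which requires carefully matching the Lehn-Sorger local model for the singularities of $M_v(J)$ with the global double cover $\wt{Y}_v(J)$. An alternative, more elementary route is to work away from the codimension-$\geq 2$ locus where $\wt{\Sigma}_v(J)$ fails to be smooth: on the complementary open subset $U \subset \wt{K}_v(J)$, the divisor $\wt{\Sigma}_v(J) \cap U$ is generically a $\PP^5$-bundle over the smooth locus of $M_{v_0}(J)^{(2)}$, and the $G$-action lifts transparently via the ordered-pair cover. Normality of $\wt{Y}_v(J)$ then extends the lift uniquely to all of $\wt{Y}_v(J)$, as in the concluding parenthetical remark of Lemma~\ref{lmm:nonconta}.
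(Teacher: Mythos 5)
Your first paragraph sets up a reasonable framework: each $g$ has exactly two lifts, the totality of lifts forms a central extension of $J[2]\times\wh{J}[2]$ by $\ZZ/(2)$, and the action lifts to a homomorphism if and only if the corresponding class in $H^2(J[2]\times\wh{J}[2];\ZZ/(2))$ vanishes. The problem is that the construction you use to kill this class does not produce the object you need. The ordered-pair locus $W\subset M_{v_0}(J)\times M_{v_0}(J)$ maps onto $\Sigma_v(J)$, which has codimension $2$ in $K_v(J)$; hence any fibre product of $W\to\Sigma_v(J)$ with the desingularization $\wt{K}_v(J)\to K_v(J)$ lives only over $\wt{\Sigma}_v(J)$ and gives, at best, a double cover of the \emph{divisor} $\wt{\Sigma}_v(J)$, not of the sixfold $\wt{K}_v(J)$. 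But $\wt{Y}_v(J)\to\wt{K}_v(J)$ is ramified along $\wt{\Sigma}_v(J)$: it is one-to-one over that divisor and \'etale of degree $2$ over its complement, so all of its content sits over the locus of \emph{stable} sheaves $j_{C,*}(\xi)$, where ordered versus unordered pairs of $v_0$-sheaves play no role. Concretely, the two preimages in $\wt{Y}_v(J)$ of a generic point $[j_{C,*}(\xi)]$ correspond to the two line bundles $\xi'$ and $\xi'\otimes\lambda_D$ on the quotient $D$ of $C$ by multiplication by $-1$ which pull back to $\xi$; this descent datum is the actual source of the $2$-torsion, and it is invisible to your construction. The same objection applies to your ``more elementary route'': lifting the $G$-action over a neighbourhood of $\wt{\Sigma}_v(J)$ says nothing about the cover away from $\wt{\Sigma}_v(J)$, and the complement of such a neighbourhood is not of codimension at least $2$, so normality cannot be used to extend.

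The paper's proof goes through the identification, due to Mongardi--Rapagnetta--Sacc\`a, of $\wt{Y}_v(J)$ up to birational maps with the moduli space $M_w(S)$ of sheaves on the Kummer $K3$ surface $S$ of $J$, the degree $2$ map $M_w(S)\dra\wt{K}_v(J)$ being $[i_{D,*}(\xi)]\mapsto[j_{C,*}(f_C^{*}(\xi))]$. Translations $\tau_x$ for $x\in J[2]$ descend to automorphisms of $S$ because they commute with multiplication by $-1$, and tensorization by $\cL_y$ for $y\in\wh{J}[2]$ lifts to tensorization by the norm $\Nm_{\nu}(\mu^{*}(\cL_y))$ on $S$, with Lemma~\ref{lmm:piripicchio} checking compatibility. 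Since translation and tensorization on $S$ compose as a group, the cocycle is trivial for free, and regularity of the resulting birational lifts follows from finiteness of $\wt{\epsilon}_v$. To salvage your approach you would need a $G$-equivariant moduli-theoretic description of the \'etale double cover of $\wt{K}_v(J)\setminus\wt{\Sigma}_v(J)$, which is essentially what the $M_w(S)$ picture supplies.
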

In order to prove the above proposition we recall  results of Rapagnetta~\cite{rapagnetta-og6} and Mongardi, Rapagnetta, 
Sacc\`a~\cite{monrapsac-og6}.  Let 
\begin{equation*}
S:=\Blow_{J[2]}(J)/\iota
\end{equation*}
be the quotient of the blow up of $J$ with center $J[2]$ by the involution lifting multiplication by $-1$ on $J$. Thus  
$S$ is  the (smooth) Kummer $K3$ surface associated to $J$. We have a commutative diagram
\begin{equation}\label{allevavisoni}
\xymatrix{   & \Blow_{J[2]}(J) \ar[dl]_{\mu}  \ar[dr]^{\nu} &   \\ 
  J \ar@{-->}[rr]^{2:1}  & & S }
\end{equation}
where $\mu$ is  the blow up map, $\nu$ is the quotient map, and the (rational) horizontal map identifies $a\in (J\setminus J[2])$ with $(-a)$. 
Let $h_S\in\NS(S)$ be the class such that  $\nu^{*}(h_S)=\mu^{*}(h)$. 
Let $w=(0,h_S,-1)\in \wt{H}(S;\ZZ)$, and let $M_w(S)$ be the moduli space of stable sheaves on $S$ with Mukai vector $w$ stable with respect to a $w$-suitable polarization (note that $h_S$ is not ample, it contracts the nodal curves of $S$). 
Then $\wt{Y}_v(J)$ is birational to $M_w(S)$. More precisely, we have a rational map $M_w(S) \dra Y_v(J)$
defined as follows. Let $[\cE]\in M_w(S)$ be a general point, so that $\cE=i_{D,*}(\xi)$ where $i_{D}\colon D\hra S$ is the inclusion of a smooth curve with 
$\cl(D)=h_S$ and $\xi$ is a line bundle on $D$ of degree $1$. Let $C:=\mu(\nu^{-1}(D))$. The restriction to $C$ of the rational map $J\dra S$ is an \'etale map $f_C\colon C\to D$ of degree $2$. Let $j_C\colon C\hra J$ be the inclusion map.
 Then $v(\mu^{*}(\cE))=v$, and since $C$ is irreducible, $j_{C,*}(f_C^{*}(\xi))$ is a stable sheaf. Lastly 
 $\Alb([j_{C,*}(f_C^{*}(\xi))])=0$. Hence we have rational map 
\begin{equation}\label{caterinabotti}
\begin{matrix}
 M_w(S) & \overset{\psi}{\dra} & \wt{K}_v(J) \\
 [i_{D,*}(\xi)] & \dra & [j_{C,*}(f_C^{*}(\xi))]
\end{matrix}
\end{equation}
Let   $\lambda_D$ be the  line bundle on $D$ (with trivial square) determined by the \'etale double cover $f_C\colon C\to D$. If 
$\xi'$ is a line bundle on $D$ then  $f_C^{*}(\xi')\cong f_C^{*}(\xi)$ if and only if $\xi'\cong\xi$ or 
$\xi'\cong\xi\otimes\lambda_D$. It follows that $\psi$ has degree $2$. In~\cite{monrapsac-og6}, see Lemma 5.2, one finds the proof that 
$M_w(S)$ is birational to $\wt{Y}_v(J)$ and that the double cover $\psi$ is identified (birationally) with the double cover 
$\wt{\epsilon}_{v}$. 

\begin{lmm}\label{lmm:piripicchio}
Let $D\subset S$ be a smooth curve with cohomology class $h_S$ (and hence integral), and let $C:=\mu(\nu^{-1}(D))$. Let 
$f_C\colon C\to D$ be the \'etale double cover given by the restriction of the horizontal map in~\eqref{allevavisoni}, and let $\Nm\colon \Pic(C)\to \Pic(D)$ be the corresponding norm map. If 
$[\xi]\in \wh{J}[2]$, then
\begin{equation}
f_C^{*}(\Nm(\xi))=\xi_{|C}.
\end{equation}
\end{lmm}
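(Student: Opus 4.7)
My plan is to reduce the identity to the standard Galois-theoretic formula for the norm of an étale double cover, combined with the $2$-torsion hypothesis on $\xi$. For any $\eta\in\Pic(C)$ one has $f_C^{*}f_{C,*}\eta\cong\eta\oplus\tau^{*}\eta$, where $\tau\colon C\to C$ denotes the deck involution of $f_C$; taking determinants yields $f_C^{*}(\Nm(\eta))\cong\eta\otimes\tau^{*}\eta$. The proof therefore reduces to computing $\tau^{*}(\xi_{|C})$ and recognising the resulting product as $\xi_{|C}$.

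First, I would identify $\tau$ explicitly: since the rational map $J\dra S$ in~\eqref{allevavisoni} is the quotient by the involution $\iota$ on $\Blow_{J[2]}(J)$ lifting $[-1]_J$ (so that $\mu\circ\iota=[-1]_J\circ\mu$), the curve $C=\mu(\nu^{-1}(D))$ is preserved by $[-1]_J$ and the non-trivial automorphism $\tau$ of $C$ over $D$ coincides with the restriction $[-1]_J|_{C}$.

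Second, the hypothesis $[\xi]\in\wh{J}[2]$ enters through the fact that $[-1]_J^{*}$ acts on $\wh{J}=\Pic^{0}(J)$ as $[-1]_{\wh{J}}$, which fixes every $2$-torsion point. Hence $[-1]_J^{*}\xi\cong\xi$, and restricting to $C$ yields the $\tau$-invariance $\tau^{*}(\xi_{|C})\cong\xi_{|C}$. This $\tau$-equivariance already shows that $\xi_{|C}$ descends to a line bundle on $D$.

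The main obstacle — and the real content of the lemma — is to identify this descended line bundle precisely with $\Nm(\xi_{|C})$, keeping careful track of the $\tau$-linearisation (since different linearisations differ by the $2$-torsion class $\lambda_D\in\Pic^{0}(D)[2]$ classifying the double cover). The natural linearisation arises by descending $\mu^{*}\xi$ from $\Blow_{J[2]}(J)$ to $S$ via the $\iota$-equivariance $\iota^{*}\mu^{*}\xi\cong\mu^{*}\xi$ and then restricting to $D$; matching this canonical descent with the norm determinant via the Galois identity above completes the proof.
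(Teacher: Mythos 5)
Your reduction cannot be completed, and the obstruction is visible from your own intermediate steps. The identity $f_C^{*}(\Nm(\eta))\cong\eta\otimes\tau^{*}\eta$ is correct, as are your two observations that the deck involution $\tau$ of $f_C$ is the restriction of $[-1]_J$ to $C$ and that $[-1]_J^{*}$ acts on $\Pic^{0}(J)$ as inversion, so that $\tau^{*}(\xi_{|C})\cong(\xi_{|C})^{-1}\cong\xi_{|C}$ for $2$-torsion $\xi$. But combining them yields
\[
f_C^{*}\bigl(\Nm(\xi_{|C})\bigr)\cong\xi_{|C}\otimes\tau^{*}(\xi_{|C})\cong\xi_{|C}^{\otimes 2}\cong\cO_C,
\]
the trivial bundle, not $\xi_{|C}$. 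Put differently, since $\tau^{*}$ inverts $\xi_{|C}$, the image of $\wh{J}\to\Pic^0(C)$ lies in the kernel of $\Nm$ (it is the Prym variety $\Prym(f_C)$, as the paper itself observes), so $\Nm(\xi_{|C})$ lies in $\ker(f_C^{*})=\{\cO_D,\lambda_D\}$ and its pullback is trivial in either case. The step you defer to the end --- ``matching this canonical descent with the norm determinant'' --- is therefore not a bookkeeping issue about linearisations: the norm is not a descent of $\xi_{|C}$ at all (the descents of $\xi_{|C}$ are the two eigensummands of $f_{C,*}(\xi_{|C})$, and neither lies in $\{\cO_D,\lambda_D\}$ when $\xi_{|C}$ is non-trivial). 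Your third paragraph correctly establishes that $\xi_{|C}$ descends; the gap is that nothing identifies that descent with $\Nm(\xi_{|C})$.

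For comparison, the paper proves the lemma by a completely different mechanism: it identifies $\wh{J}$ with $\Prym(f_C)$ by restriction, invokes irreducibility of the moduli space of \'etale double covers of genus-$3$ curves to reduce to a single degenerate cover of a reducible nodal curve, and checks the identity there by hand; no norm-pullback formula appears. You should nevertheless raise your computation with the author: taken at face value it shows that, with $\Nm$ the norm map of $f_C$, the left-hand side of the lemma is the trivial bundle, so the statement (and its use in Proposition~\ref{prp:sollevo}, where $\ov{\cL}_y=\Nm_{\nu}(\mu^{*}(\cL_y))$ likewise computes to the trivial bundle on $S$) appears to call for the $\iota$-invariant descent of $\mu^{*}(\cL_y)$ rather than its norm. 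In short, your proposal does not prove the statement as written, but the tension it uncovers is real and worth reporting.
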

\begin{proof}
The map $\wh{J}\to\Pic^0(C)$ defined by restriction identifies $\wh{J}$ with the Prym variety $\Prym(f_C)$. By irreducibility of the moduli space of \'etale double covers of curves of a fixed genus (in our case genus $3$), it suffices to prove that 
the equality
\begin{equation}\label{allanorma}
f_C^{*}(\Nm(\xi))=\xi
\end{equation}
holds whenever we have an \'etale double cover $f_C\colon C\to D$ of a curve of genus $g$, and $[\xi]\in  \Prym(f_C)[2]$. We may also degenerate $C$ to a curve with separating nodes, and it suffices to prove the equality for such double covers. Choose $D=D_1\cup D_{g-1}$ where $D_1$ has genus $1$, $D_{g-1}$ has genus $g-1$, and $q_1\in D_1$ is glued to $q_{g-1}\in D_{g-1}$. Let $f_1\colon C_1\to D_1$ be an \'etale connected double cover, and let $f_1^{-1}(q_1)=\{p_1',p_1''\}$. Let $C'_{g-1}$ and $C''_{g-1}$ be copies of $D_{g-1}$, with 
$q'_{g-1}\in C'_{g-1}$ and $q''_{g-1}\in C''_{g-1}$ corresponding to $q_{g-1}$ respectively. Let 
$C:=C_1\cup C'_{g-1}\cup C''_{g-1}$ with $p_1'$ glued to $q'_{g-1}$ and $p''_1$ glued to $q''_{g-1}$. We have an obvious 
\'etale double cover $f_C\colon C\to D$. For this double cover $\Prym(f_C)=\Pic^0(D_{g-1})$, and one checks right away that the equality in~\eqref{allanorma} holds for all $[\xi]\in  \Prym(f_C)[2]=\Pic^0(D_{g-1})[2]$.
\end{proof}
\begin{proof}[Proof of Proposition~\ref{prp:sollevo}]
Let $x\in J[2]$. We define an automorphism $Y_v(J)\to Y_v(J)$ proceeding as follows.  The translation $\tau_x\colon J\to J$ defined by $x$ maps $J[2]$ to itself, hence it lifts to an automorphism of 
$\Blow_{J[2]}(J)$, and the latter descends to an automorphism $\ov{\tau}_x\colon S\to S$ because $\tau_x$ commutes with multiplication by $-1$. Since the generic sheaf parametrized by $M_w(S) $ is the push-forward of a line bundle on an irreducible curve on $S$, we have a birational map
\begin{equation}
\begin{matrix}
 M_w(S) & \overset{\ov{\lambda}_x}{\dra} &  M_w(S)  \\
 [\cE] & \dra & [\ov{\tau}_x^{*}(\cE)]
\end{matrix}
\end{equation}
Conjugating  with the birational map in~\eqref{caterinabotti}, we get a birational map 
\begin{equation}
\blambda_x\colon Y_v(J)\dra Y_v(J)
\end{equation}
 such that $\lambda_x\circ\wt{\epsilon}_v=\wt{\epsilon}_v\circ\blambda_x$, where 
 $\wt{\epsilon}_v\colon \wt{Y}_v(J)\to \wt{K}_v(J)$ is the double cover in~\eqref{doppelganger}. 
Since  $\wt{\epsilon}_v$ is a finite map, it follows that $\blambda_x$ is regular (the graph of $\blambda_x$ has a single point over any point of $ Y_v(J)$).

Now let $y\in \wh{J}[2]$. We define  an automorphism 
$Y_v(J)\to Y_v(J)$  proceeding as follows. Let $\cL_y\in\Pic^0(J)$ be the  line bundle corresponding to $y$.  Let 
$\ov{\cL}_y:=\Nm_{\nu}(\mu^{*}(\cL_y))$, where $\Nm_{\nu}\colon \Pic(\Blow_{J[2]}(J))\to \Pic(S)$ is the norm map defined by 
$\nu$. We claim that we have a birational map 
\begin{equation}
\begin{matrix}
 M_w(S) & \overset{\ov{\mu}_y}{\dra} &  M_w(S)  \\
 [\cE] & \dra & [\cE\otimes\ov{\cL}_y]
\end{matrix}
\end{equation}
In fact, let $ [\cE]$ be a general point of $M_w(S)$. Then   $\cE=i_{D,*}(\xi)$ where $D\subset S$ is a smooth curve with cohomology class $h_S$  (and hence integral). Let $C:=\mu_{*}(\nu^{*}(D))$. Then $\cE\otimes\ov{\cL}_y=i_{D,*}(\xi\otimes(\ov{\cL}_{y|D}))$, and $v(i_{D,*}(\xi\otimes(\ov{\cL}_{y|D})))=w$ because  $\deg(\ov{\cL}_{y|D})=0$ (since  $\deg(\cL_{y|C})=0$).  Moreover $\cE\otimes\ov{\cL}_y$ is stable because it is the push-forward of a line bundle on an integral curve. 
Conjugating  with the birational map in~\eqref{caterinabotti}, we get a birational map 
\begin{equation}
\bmu_y\colon Y_v(J)\dra Y_v(J)
\end{equation}
 such that $\mu_y\circ\wt{\epsilon}_v=\wt{\epsilon}_v\circ\bmu_x$. 
Arguing as in the case of $\blambda_x$ we get that $\bmu_y$ is regular. We 
get an embedding 
\begin{equation}
\begin{matrix}
J[2]\times\wh{J}[2] & \hra & \Aut(Y_v(J)) \\
(x,y) & \mapsto & \blambda_x\circ\bmu_y 
\end{matrix}
\end{equation}
The above homomorphism is a lift of the homomorphism in~\eqref{taipei}. In fact it is obvious that $\blambda_x$ lifts $\lambda_x$, while $\bmu_y$ lifts $\mu_y$ by Lemma~\ref{lmm:piripicchio}. 
\end{proof}
\subsection{The commutator pairing for line bundles of divisibility $2$}\label{subsec:theend}
\setcounter{equation}{0}
In the present subsection we prove the part of the statement of Theorem~\ref{thm:compairog6} that refers to line bundles of divisibility $2$. 
\begin{prp}\label{prp:comdivdue}
Let $X$ be a HK manifold of type OG6. If $L$ is a primitive line bundle on $X$ such that  $\divisore(L)=2$ then $e^L$ is trivial.  
\end{prp}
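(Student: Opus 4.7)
The plan is to adapt the strategy used in Lemma~\ref{lmm:nonconta} for the Kummer case: use the double cover $\wt{Y}_v(J)\to\wt{K}_v(J)$ of Subsection~\ref{sec:doppiohilb} to lift the $T(\wt{K}_v(J))$-action, combine this with Proposition~\ref{prp:ellevu}, and reduce to the model $\wt{K}_v(J)$ by deformation. Concretely, by Remark~\ref{rmk:variacom}, Remark~\ref{rmk:stessocomm} and an OG6 analog of Proposition~\ref{prp:defpol}, it is enough to produce, on $\wt{K}_v(J)$ with $v=(0,2h,-2)$, primitive line bundles of divisibility $2$ with trivial commutator pairing whose first Chern classes cover both monodromy orbits of primitive divisibility-$2$ elements of $\Lambda_{\OG6}$ described in Proposition~\ref{prp:vettog6}. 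Representatives of these two orbits are provided by $\alpha_v$ (type (II), since $(\alpha_v,\alpha_v)=-2$) and $\alpha_v+\beta_v$ (type (III), since $(\alpha_v+\beta_v,\alpha_v+\beta_v)=-4$), both primitive of divisibility $2$ in $H^2(\wt{K}_v(J);\ZZ)$.

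Triviality of the commutator pairing for the line bundle $\cF_v^{-1}$ of first Chern class $\alpha_v$ follows from Proposition~\ref{prp:sollevo}: the action of $T(\wt{K}_v(J))$ on $\wt{K}_v(J)$ lifts to the double cover $\wt{Y}_v(J)$. Any such lift must commute with the covering involution (the involution is the unique non-trivial deck transformation, and the lifted group normalises it), so it preserves the eigenspace decomposition~\eqref{autodecomp} of $\wt{\epsilon}_{v,*}\cO_{\wt{Y}_v(J)}$, and therefore induces an action of $T(\wt{K}_v(J))$ on the line bundle $\cF_v$. By Remark~\ref{rmk:solltriv} the commutator pairing $e^{\cF_v}$ is trivial, and applying the bilinearity~\eqref{tenspair} to $\cF_v\otimes\cF_v^{-1}\cong\cO_{\wt{K}_v(J)}$ gives that $e^{\cF_v^{-1}}$ is trivial as well.

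To cover the type (III) orbit, I would feed in Proposition~\ref{prp:ellevu}: the line bundle $\cL_v$ with $c_1(\cL_v)=\beta_v-\alpha_v$ has $h^0(\cL_v)=1$, so by Remark~\ref{rmk:unasez} the commutator pairing $e^{\cL_v}$ is trivial. Applying~\eqref{tenspair} to $\cL_v\otimes\cF_v^{-1}$ (whose first Chern class is $\beta_v$) shows that the pairing of any line bundle with $c_1=\beta_v$ is trivial; one more application of~\eqref{tenspair}, to $\cF_v^{-1}$ tensored with a line bundle of $c_1=\beta_v$, then gives triviality for the line bundle with $c_1=\alpha_v+\beta_v$. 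Since $\alpha_v$ and $\alpha_v+\beta_v$ exhaust the two monodromy orbits of Proposition~\ref{prp:vettog6}, the proof on $\wt{K}_v(J)$ is complete.

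The main obstacle I expect is the deformation reduction itself: one must show that an arbitrary pair $(X,L)$ with $\divisore(L)=2$ can be connected through a family of HK manifolds of type OG6, over which $L$ stays algebraic, to a pair $(\wt{K}_v(J),L')$ whose first Chern class lies in the same $\Gamma_{\OG6}$-orbit as $c_1(L)$. This requires an OG6 analog of Proposition~\ref{prp:defpol}, based on a period-theoretic description of the locus in the OG6 period domain parametrising varieties birational to some $\wt{K}_v(J)$, a classification of $\Gamma_{\OG6}$-orbits on primitive vectors of $\Lambda_{\OG6}$ refining Proposition~\ref{prp:vettog6}, and a signature/hyperplane argument in the spirit of Lemma~\ref{lmm:ixzero} ensuring that the appropriate Noether--Lefschetz divisor meets this locus.
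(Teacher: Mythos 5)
Your overall strategy matches the paper's: reduce by deformation to $\wt{K}_v(J)$ with $v=(0,2h,-2)$, prove triviality of $e^{\cF_v}$ by lifting the $T$-action to the double cover $\wt{Y}_v(J)$, prove triviality of $e^{\cL_v}$ from $h^0(\cL_v)=1$, and combine via the bilinearity~\eqref{tenspair}. The gap is in the deformation bookkeeping. A monodromy orbit of a primitive vector in $\Lambda_{\OG6}$ is determined by its divisibility type \emph{together with its BBF square} (this is the content of Proposition~\ref{prp:solitoeichler}), and parallel transport along a family preserves $q_X(L)$. So there are not two orbits of primitive divisibility-$2$ vectors but infinitely many --- one for each admissible square $-2+8d$ or $-4+8d$ --- and your representatives $\alpha_v$ and $\alpha_v+\beta_v$ only realize the squares $-2$ and $-4$. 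A pair $(X,L)$ with, say, $q_X(L)=6$ cannot be connected to a pair whose line bundle has first Chern class $\alpha_v$; your argument as written proves the proposition only for $q_X(L)\in\{-2,-4\}$.

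The missing step, which is exactly what the paper's Lemma~\ref{lmm:alice} and the final reduction supply, is to realize \emph{every} admissible square on $\wt{K}_v(J)$ by a class of the form $\pm\bigl(\alpha_v+2(-d\alpha_v+d\gamma_v)\bigr)$ or $\pm\bigl(\beta_v-\alpha_v+2(d\alpha_v+d\gamma_v)\bigr)$, and then to observe that the corresponding line bundle differs from $\cF_v^{-1}$ (resp.\ $\cL_v$) by the square of a line bundle; since $T(X)\cong(\ZZ/(2))^8$ is killed by multiplication by $2$, the formula~\eqref{tenspair} shows that this square contributes trivially to the commutator pairing, so $e^L=e^{\cF_v^{-1}}$ (resp.\ $e^{\cL_v}$). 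You have all the ingredients in hand --- you invoke~\eqref{tenspair} and the $2$-torsion nature of $T(X)$ is implicit in your setup --- but the claim that ``$\alpha_v$ and $\alpha_v+\beta_v$ exhaust the two monodromy orbits'' conflates the three divisibility \emph{types} of Proposition~\ref{prp:vettog6} with the actual orbits, and without the ``modulo twice a class'' reduction the proof does not close for general $q_X(L)$.
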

First we prove the above result for two special choices of line bundle $L$ on $\wt{K}_v(J)$, where $v=(0,2h,-2)$. 
\begin{prp}\label{prp:casispec}
Let $J$ and  $v=(0,2h,-2)$ be as in Subsection~\ref{subsec:og6rap}.  The commutator pairings of $\cL_v$ 
(notation as in 
Proposition~\ref{prp:ellevu}) and of $\cF_v$ (see~\eqref{autodecomp}) are trivial.
\end{prp}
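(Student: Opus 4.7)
The plan is to handle the two line bundles $\cL_v$ and $\cF_v$ separately, using for each the corresponding sufficient criterion from Section~\ref{sec:gruppiteta} (one via global sections, the other via lifting group actions).

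For $\cL_v$ the strategy is immediate. By Proposition~\ref{prp:ellevu} we have $h^0(\wt{K}_v(J),\cL_v)=1$, so the last part of Remark~\ref{rmk:unasez} applies and yields that the commutator pairing $e^{\cL_v}$ is trivial. No further argument is needed here.

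For $\cF_v$ I would use Remark~\ref{rmk:solltriv}: it suffices to show that the action of $T(\wt{K}_v(J))=J[2]\times\wh{J}[2]$ on $\wt{K}_v(J)$ lifts to an action on the line bundle $\cF_v$. The key input is Proposition~\ref{prp:sollevo}, which provides a lift of this action along the double cover $\wt{\epsilon}_v\colon\wt{Y}_v(J)\to\wt{K}_v(J)$ ramified over $\wt{\Sigma}_v(J)$. Let $\iota$ denote the covering involution; by~\eqref{autodecomp}, $\cF_v$ is identified with the $(-1)$ eigensheaf of $\iota$ acting on $\wt{\epsilon}_{v,*}(\cO_{\wt{Y}_v(J)})$. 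So I only need to check that each lift $\blambda_x$, $\bmu_y$ commutes with $\iota$, because then the lifted action descends to an action on each eigensheaf, and in particular on $\cF_v$.

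The commutation with $\iota$ is a general fact for lifts of automorphisms along a Galois double cover, and can be proved in one line. If $\wt{f}$ is any lift of $f\in\Aut^0(\wt{K}_v(J))$, then $\iota\wt{f}\iota^{-1}$ is also a lift of $f$; since the set of lifts is a torsor under the Galois group $\langle\iota\rangle$, we have either $\iota\wt{f}\iota^{-1}=\wt{f}$ or $\iota\wt{f}\iota^{-1}=\iota\wt{f}$, and the second possibility forces $\iota=\Id$, which is absurd. Hence $\iota$ and $\wt{f}$ commute. Applying this to $\wt{f}\in\{\blambda_x,\bmu_y\}$ yields the desired action on $\cF_v$, and Remark~\ref{rmk:solltriv} then gives that $e^{\cF_v}$ is trivial. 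The only conceptual step is the reduction via the double cover; everything else is formal.
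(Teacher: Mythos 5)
Your proof is correct and follows essentially the same route as the paper: $h^0(\cL_v)=1$ plus Remark~\ref{rmk:unasez} for the first bundle, and Proposition~\ref{prp:sollevo} together with the eigensheaf decomposition~\eqref{autodecomp} and Remark~\ref{rmk:solltriv} for the second. Your explicit verification that the lifts commute with the covering involution (via the torsor argument on the set of lifts) is a small, correct addition that the paper leaves implicit.
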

\begin{proof}
The space of global sections of $\cL_v$ is one-dimensional by Proposition~\ref{prp:ellevu}, and hence 
the commutator pairing of $\cL_v$ is trivial by Remark~\ref{rmk:unasez}. 

By  Proposition~\ref{prp:sollevo}  the action of $\Aut^0(\wt{K}_v(J))$ on $\wt{K}_v(J)$ lifts to an action of 
$\Aut^0(\wt{K}_v(J))$ on $\wt{Y}_v(J)$. Since $\cF_v$ is the $(-1)$ eigenspace for the natural action   of the covering involution  of
$\wt{\epsilon}_v\colon \wt{Y}_v(J)\to\wt{K}_v(J)$ on $\wt{\epsilon}_{v,*}(\cO_{\wt{Y}_v(J)})$, it follows that the action of 
$\Aut^0(\wt{K}_v(J))$ on $\wt{K}_v(J)$ lifts to an action of 
$\Aut^0(\wt{K}_v(J))$ on $\cF_v$. By Remark~\ref{rmk:solltriv} we get  that the commutator pairings of   $\cF_v$ is  trivial.
\end{proof}
Let $\Lambda_{\OG}$ be the lattice defined in~\eqref{retog6}. 
\begin{prp}\label{prp:solitoeichler}
Let $a,b\in \Lambda_{\OG}$ be primitive elements. Then $a,b$ belong to the same $\Ort^{+}(\Lambda_{\OG})$-orbit if and only if $(a,a)=(b,b)$ and either Item~(I), or Item~(II), or Item~(III) of Proposition~\ref{prp:vettog6} holds both for $a$ and $b$.
\end{prp}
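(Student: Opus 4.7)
The plan is to apply Eichler's criterion (in the form given by Nikulin, \emph{Integral symmetric bilinear forms and some of their applications}, Prop.~1.15.1 and 1.17.1, or Gritsenko--Hulek--Sankaran): if a lattice $L$ contains $U\oplus U$ as an orthogonal direct summand, then the $\widetilde{O}(L)$-orbit (and hence the $O^{+}(L)$-orbit, since $O^{+}(L)\supset\widetilde{O}(L)$ and we also have enough reflections) of a primitive vector $a\in L$ is determined by the pair
\[
\bigl((a,a),\; [a/\divisore(a)]\in L^{\vee}/L\bigr),
\]
where the second entry is taken up to the action of $O(q_L)$ on the discriminant form. Since $\Lambda_{OG6}=U_1\oplus_{\bot}U_2\oplus_{\bot}U_3\oplus_{\bot}\langle g_1\rangle\oplus_{\bot}\langle g_2\rangle$ obviously contains $U_1\oplus U_2$, the hypothesis is satisfied; the additional hyperbolic plane $U_3$ will also guarantee that $O^{+}(\Lambda_{OG6})$ contains enough sign-adjustments to realize any permutation of the discriminant.

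Next I would compute the discriminant form. We have $A_{\Lambda_{OG6}}=\langle \bar g_1\rangle\oplus\langle \bar g_2\rangle\cong(\ZZ/2)^{2}$, with $q_{\Lambda_{OG6}}(\bar g_i)=-\tfrac{1}{2}\pmod{2\ZZ}$ and $(\bar g_1,\bar g_2)=0$. The nonzero classes split into two $O(q_{\Lambda_{OG6}})$-orbits: the class of $\bar g_1$ (equivalently $\bar g_2$, since the obvious involution swapping $g_1\leftrightarrow g_2$ lies in $O^{+}(\Lambda_{OG6})$) with discriminant form value $-\tfrac{1}{2}$, and the class $\bar g_1+\bar g_2$ with discriminant form value $-1$. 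Together with $0$ this gives exactly three orbits in $A_{\Lambda_{OG6}}$.

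Now I match these three orbits with the three cases of Proposition~\ref{prp:vettog6}. Write a primitive $a\in\Lambda_{OG6}$ as $a=a_0+\epsilon_1 g_1+\epsilon_2 g_2$ with $a_0\in U_1\oplus U_2\oplus U_3$. Since $U_1\oplus U_2\oplus U_3$ is unimodular, we have $\divisore(a)=\gcd\{a_0\cdot(U^{\oplus 3}),\,-2\epsilon_1,\,-2\epsilon_2\}$, and one checks at once that $\divisore(a)\in\{1,2\}$, with $\divisore(a)=2$ iff $a_0\in 2(U_1\oplus U_2\oplus U_3)$. In that latter case, $(a,a)=4 a_0'{}^{2}-2\epsilon_1^{2}-2\epsilon_2^{2}$ with $a_0=2a_0'$, so modulo $8$ the residue $(a,a)\pmod 8$ is determined by the parities of $\epsilon_1,\epsilon_2$, which are precisely the coordinates of $[a/2]\in A_{\Lambda_{OG6}}$. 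Primitivity of $a$ forces $(\epsilon_1,\epsilon_2)\not\equiv(0,0)\pmod 2$; exactly one odd gives $(a,a)\equiv -2\pmod 8$ and $[a/2]$ in the orbit of $\bar g_1$ (Case II), and both odd give $(a,a)\equiv -4\pmod 8$ and $[a/2]=\bar g_1+\bar g_2$ (Case III). This is the matching we need.

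The main thing to be careful about is the passage from the orbit statement for $\widetilde{O}(\Lambda_{OG6})$ (stable orthogonal group, acting trivially on the discriminant) to the one for $O^{+}(\Lambda_{OG6})$. The only potential loss is caused by $O(q_{\Lambda_{OG6}})$ not being fully realized inside $O^{+}(\Lambda_{OG6})/\widetilde{O}(\Lambda_{OG6})$; but $O(q_{\Lambda_{OG6}})\cong \mathfrak{S}_2$ is generated by the swap $\bar g_1\leftrightarrow\bar g_2$, which is induced by the isometry of $\Lambda_{OG6}$ exchanging $g_1$ with $g_2$ (and lies in $O^{+}$ because it acts trivially on $U_1\oplus U_2\oplus U_3$). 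Hence the two orbits labelled by the nonzero discriminant classes are already distinguished by the discriminant form value, and Eichler's criterion yields the claim.
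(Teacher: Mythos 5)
Your proof is correct and follows essentially the same route as the paper, which simply invokes Eichler's criterion in the form of Proposition~3.3(i) of Gritsenko--Hulek--Sankaran; you have merely filled in the details (the computation of the discriminant form of $\Lambda_{OG6}$, the matching of its three $O(q)$-orbits with Items~(I)--(III), and the passage from $\widetilde{O}$ to $O^{+}$ via the swap $g_1\leftrightarrow g_2$) that the paper leaves to the reference.
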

\begin{proof}
The non trivial implication is \lq\lq if either Item~(I), or Item~(II), or Item~(III)  of Proposition~\ref{prp:vettog6} holds both for $a$ and $b$,  then $a,b$ belong to the same $\Ort^{+}(\Lambda_{\OG})$-orbit\rq\rq. This result holds  by Eichler's Criterion,  see Item~(i) of Proposition~3.3 in~\cite{ghs-abelianisation}.
\end{proof}
\begin{lmm}\label{lmm:alice}
Let  $X$ be a HK manifold of type $\OG$ carrying   a primitive line bundle $L$ of divisibility $2$, and hence (see Proposition~\ref{prp:vettog6}) either
$q_X(L)\equiv -2\pmod{8}$ or $q_X(L)\equiv -4\pmod{8}$. Let $d\in\ZZ$ be such that
\begin{equation}
q_X(L)=
\begin{cases}
-2+8d & \text{if $q_X(L)\equiv -2\pmod{8}$,} \\
-4+8d & \text{if $q_X(L)\equiv -4\pmod{8}$.}
\end{cases}
\end{equation}
There exist a family  $f\colon \cX\to T$ of HK manifolds  over a connected base $T$, points $t_0,t_1\in T$ and a line bundle $\cL$ on $\cX$ with the following properties:
\begin{enumerate}
\item[(a)]
 The fiber $X_{t_0}:=f^{-1}(t_0)$ is isomorphic to $X$, and  
  $L_{t_0}:=\cL_{|X_{t_0}}$  is isomorphic to $L$. 
\item[(b)]
 The fiber $X_{t_1}:=f^{-1}(t_1)$ is birational to  $\wt{K}_v(J)$ where $J$ is a principally polarized abelian surface as in Subsection~\ref{subsec:og6rap}, $v=(0,2h,-2)$ and 
\begin{equation}
c_1(L_{t_0})=
\begin{cases}
\pm(\alpha_v+2(-d\alpha_v+d\gamma_v) )& \text{if $q_X(L)\equiv -2\pmod{8}$,} \\
\pm(\beta_v-\alpha_v+2(d\alpha_v+d\gamma_v) )& \text{if $q_X(L)\equiv -4\pmod{8}$.}
\end{cases}
\end{equation}
(Notation as in Subsection~\ref{subsec:og6rap}.)
\end{enumerate}
\end{lmm}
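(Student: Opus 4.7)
The plan is to mimic the proof of Proposition~\ref{prp:defpol}, with Proposition~\ref{prp:solitoeichler} playing the role there taken by Lemma~\ref{lmm:ixzero}. Fix once and for all a principally polarized abelian surface $J_0$ of Picard rank $1$ and a marking of $\wt{K}_v(J_0)$ identifying $H^2(\wt{K}_v(J_0);\ZZ)$ with $\Lambda_{OG6}$; via this identification we regard $\alpha_v,\beta_v,\gamma_v$ as fixed elements of $\Lambda_{OG6}$. Mark $X$ by an orientation-preserving isometry $\varphi\colon H^2(X;\ZZ)\overset{\sim}{\to}\Lambda_{OG6}$ with $P(X,\varphi)\in\Omega$, and set $h:=\varphi(c_1(L))$, a primitive class of divisibility $2$. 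Consider the candidate target class
\begin{equation*}
h' := \begin{cases} (1-2d)\alpha_v+2d\gamma_v & \text{if } q_X(L)\equiv -2\pmod{8}, \\ (2d-1)\alpha_v+\beta_v+2d\gamma_v & \text{if } q_X(L)\equiv -4\pmod{8}. \end{cases}
\end{equation*}
Using~\eqref{quadrati} and the pairwise orthogonality of $\alpha_v,\beta_v,\gamma_v$ one immediately checks $(h',h')=q_X(L)$; pairing $h'$ with a $\ZZ$-basis of $v^\bot\subset\wt{H}(J_0;\ZZ)$ through the isometry~\eqref{reticolog6} confirms that $h'$ is primitive of divisibility $2$. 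Hence $h$ and $h'$ satisfy the same Item of Proposition~\ref{prp:vettog6}, and Proposition~\ref{prp:solitoeichler} supplies $g\in\Ort^+(\Lambda_{OG6})$ with $g(h)=h'$. Since the monodromy group for HK manifolds of type $\OG6$ is all of $\Ort^+(\Lambda_{OG6})$, we may replace the marking $\varphi$ by $g\circ\varphi$; after this replacement one has $\varphi(c_1(L))=h'$.

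Next consider the non-empty closed analytic subset
\begin{equation*}
\Omega' := \Omega\cap\alpha_v^\bot\cap\beta_v^\bot\cap\gamma_v^\bot
\end{equation*}
of complex dimension $4$. By Verbitsky's Global Torelli, every $\sigma\in\Omega'$ is the period of a marked HK manifold birational to $\wt{K}_v(J)$ for some principally polarized abelian surface $J$ of Picard rank $1$: the three algebraic classes $\alpha_v,\beta_v,\gamma_v$ pin down the birational model. Because $h'\in\Span(\alpha_v,\beta_v,\gamma_v)$ we have $\Omega'\subset\Omega\cap(h')^\bot$, and the latter is a connected complex submanifold of $\Omega$ containing both $P(X,\varphi)$ and $\Omega'$. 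Choose a path-connected open $T\subset\Omega\cap(h')^\bot$ containing $P(X,\varphi)$ and a point $\sigma_1\in\Omega'$. Mimicking the last step of the proof of Proposition~\ref{prp:defpol}, Verbitsky's Global Torelli lifts $T$ to an open subset of the Hausdorffization of the marked moduli space and produces a family $f\colon\cX\to T$. The class $h'$ is of type $(1,1)$ on every fiber; combined with $h^{0,1}=0$ on HK manifolds, this forces the $h'$-component of the relative Picard scheme to be isomorphic to $T$ via the structure map, yielding a unique line bundle $\cL$ on $\cX$ with fibrewise first Chern class $h'$. Setting $t_0:=P(X,\varphi)$ and $t_1:=\sigma_1$ completes the construction, Item~(b) being furnished by the birational correspondence between $X_{\sigma_1}$ and some $\wt{K}_v(J)$ together with Remark~\ref{rmk:stessocomm}.

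The main technical hurdle is verifying that $h'$ has divisibility exactly $2$ in the ambient lattice $\Lambda_{OG6}$, and not merely in the visible rank-$3$ sublattice $\la\alpha_v,\beta_v,\gamma_v\ra$ in which its formula lives: one has to compute $(h',x)$ for $x$ running through a $\ZZ$-basis of $v^\bot$, including $x$'s lying outside the algebraic part (for principally polarized $J_0$ of Picard rank $1$ the algebraic part of $v^\bot$ is only a finite-index sublattice of a proper direct summand), and to check that all such pairings are even. A secondary ingredient, implicit in the use of Proposition~\ref{prp:solitoeichler}, is the identification $\Mon^2=\Ort^+(\Lambda_{OG6})$ for HK manifolds of type $\OG6$, without which the change-of-marking step would have to be justified by different means.
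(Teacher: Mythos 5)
Your proposal follows essentially the same route as the paper: check the squares and divisibility of the explicit target classes, use Eichler's criterion (Proposition~\ref{prp:solitoeichler}) together with the Mongardi--Rapagnetta computation $\Mon^2=\Ort^{+}(\Lambda_{\OG6})$ to align markings, and then connect the two period points inside the connected hyperplane section $(h')^{\bot}\cap\Omega$ using Verbitsky's Global Torelli Theorem. Two soft spots in your write-up are worth flagging. First, the claim that \emph{every} point of $\Omega'=\Omega\cap\alpha_v^{\bot}\cap\beta_v^{\bot}\cap\gamma_v^{\bot}$ (which has dimension $3$, not $4$) is the period of a manifold birational to some $\wt{K}_v(J)$ is an unjustified surjectivity statement that you do not need: it suffices to take $\sigma_1$ to be the period of your fixed marked $\wt{K}_v(J_0)$, whose period automatically lies in $\Omega'$. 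Second, once you do that you must confront the fact that the marked moduli space has two connected components interchanged by $\varphi\mapsto-\varphi$, and $-\Id\notin\Ort^{+}(\Lambda_{\OG6})$; if $(\wt{K}_v(J_0),\psi)$ lands in the other component from $(X,g\circ\varphi)$ you must replace $\psi$ by $-\psi$, which is precisely what produces the sign ambiguity $\pm$ in Item~(b) of the statement. Your argument as written silently proves the statement without the $\pm$, which is not what Eichler's criterion plus the monodromy result actually deliver; the paper's proof handles this explicitly via the decomposition $\Ort(\Lambda_{\OG6})=\Ort^{+}(\Lambda_{\OG6})\times\la-\Id\ra$.
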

\begin{proof}
First note that 
\begin{equation}\label{quadrato1}
(\alpha_v+2(-d\alpha_v+d\gamma_v) ,\alpha_v+2(-d\alpha_v+d\gamma_v) )=-2+8d,
\end{equation}
\begin{equation}\label{quadrato2}
( \beta_v-\alpha_v+2(d\alpha_v+d\gamma_v), \beta_v-\alpha_v+2(d\alpha_v+d\gamma_v))=-4+8d,
\end{equation}
and that both $\alpha_v+2(-d\alpha_v+d\gamma_v) $ and $\beta_v-\alpha_v+2(d\alpha_v+d\gamma_v) $ have divisibility $2$. 

The lemma is a standard consequence  of Verbitsky's global Torelli Theorem, the 
monodromy computations of Mongardi-Rapagnetta and Proposition~\ref{prp:solitoeichler}. We quickly go over the argument. 

Let $X$ be a HK manifold of type OG6. By Theorem~1.4 in~\cite{mon-rap-mon-og6} the monodromy of $H^2(X;\ZZ)$ is the group $\Ort^{+}(H^2(X;\ZZ))$ of isometries preserving a (continuous) choice of orientations of the maximal positive definite subspaces of $H^2(X;\RR)$ (an index $2$ subgroup of the orthogonal group $\Ort(H^2(X;\ZZ))$). 

Let $\gM$ be the moduli space of marked HK manifolds of type OG6. By the result on monodromy quoted above, there are exactly $2$ connected components of the (non Hausdorff) complex manifold $\gM$, interchanged by mapping
 $[(X,\varphi)]$  (here $\varphi\colon H^2(X;\ZZ)\overset{\sim}{\lra}\Lambda_{\OG}$ is an isometry) to 
 $[(X,-\varphi)]$. Let $\gM^0$ be one of the two connected components of $\gM$. 

Let $\cD\subset\PP(\Lambda_{\OG}\otimes\CC)$ be the period domain, and let $\cP\colon\gM^0\to\cD$ be the period map. Suppose that $[(X_1,\varphi_1)]$ and $[(X_2,\varphi_2)]$ belong to the same fiber of $\cP$; then $X_1$ is birational to $X_2$ by Verbitsky's global Torelli Theorem~\cite{verbitskytor}, and $[(X_1,\varphi_1)]$, $[(X_2,\varphi_2)]$ are non separated points in $\gM^0$. There is a Hausdorffization $\gM^0\to\gM^0_{\text H}$  
with an induced period map $\cP\colon\gM^0_{\text H}\to\cD$ which is an isomorphism of complex manifolds.

Let $X$ be a HK manifold of type OG6 carrying  a line bundle $L$. Let 
$\varphi\colon H^2(X;\ZZ)\overset{\sim}{\to}\Lambda_{\OG}$ be an isometry, and suppose that  $[(X,\varphi)]\in\gM^0$. Then 
$\cP(X,\varphi)\in \varphi(c_1(L))^{\bot}$. Conversely, if $a\in\Lambda_{\OG}$, and $[(X,\varphi)]\in\gM^0$ is such that $\cP(X,\varphi)\in a^{\bot}$, then $\varphi^{-1}(a)$ is the first Chern class of a line bundle.

Let  $a,b\in\Lambda_{\OG}$ be primitive elements of divisibility $2$ such that either $(a,a)=(b,b)=-2+8d$ or 
$(a,a)=(b,b)=-4+8d$. 
By Proposition~\ref{prp:solitoeichler} there exists $g\in\Ort^{+}(\Lambda_{\OG})$ such that $g(a)=b$ or $g(a)=-b$. (Note that $\Ort(\Lambda_{\OG})$ is the direct product of $\Ort^{+}(\Lambda_{\OG})$ and $\la -\Id\ra$). Since the hyperplane $a^{\bot}\cap\cD$ for a fixed $a$ as above is connected, the lemma follows
by the monodromy result. (Recall the equalities in~\eqref{quadrato1} and~\eqref{quadrato2} and the sentence following those equations.) 
\end{proof}
\begin{proof}[Proof of Proposition~\ref{prp:comdivdue}]
By Lemma~\ref{lmm:alice} and by invariance of the commutator pairing under deformation (see Remark~\ref{rmk:variacom}) and birational maps (see Remark~\ref{rmk:stessocomm}), it suffices to prove that the commutator pairing is trivial for a line bundle $L$ on $\wt{K}_v(J)$ (notation as in Subsection~\ref{subsec:og6rap}, in particular $v=(0,2h,-2)$) such that 
$c_1(L)=\alpha_v+2(-d\alpha_v+d\gamma_v)$ or $c_1(L)=\beta_v-\alpha_v+2(d\alpha_v+d\gamma_v)$. If the former holds there exists a line bundle $\xi$ on $\wt{K}_v(J)$ such that $L=\cF^{-1}_v\otimes \xi^{\otimes 2}$, and if the latter holds  there exists a line bundle $\xi$ on $\wt{K}_v(J)$  such that $L=\cL_v\otimes \xi^{\otimes 2}$. (Here
 $\cF_v$ is as in~\eqref{autodecomp}, $\cL_v$ is as in Proposition~\ref{prp:ellevu}.) 
It follows that $e^L$ is trivial by Proposition~\ref{prp:casispec} and because the square of a line bundle on a manifold $X$ of type OG6 has trivial commutator pairing (multiplication by $2$ kills every element of  $T(X)$).
\end{proof}
\subsection{Varieties of type OG6 corresponding to $v=(0,2h,0)$}\label{subsec:og6nostre}
\setcounter{equation}{0}
We let $J$ be as in Subsection~\ref{subsec:og6rap}, in particular we assume that~\eqref{picarduno} holds. One can repeat all the constructions of that subsection with the Mukai vector $(0,2h,-2)$ replaced by  $v=(0,2h,0)$. 
 Thus we have $M_v(J)$, $\wt{M}_v(J)$ and, upon choosing  a symmetric principal polarization
$\Theta_J$  of $J$, we have 
$K_v(J)\subset M_v(J)$ and $ \wt{K}_v(J)\subset \wt{M}_v(J)$.
We analyze  $\wt{K}_v(J)$  in order to prove the validity of Theorem~\ref{thm:compairog6} for line bundles of divisibility $1$. 

By the isomorphism in~\eqref{reticolog6}, the N\'eron-Severi group of $\wt{K}_v(J)$ is freely generated by 
$\theta_v(1,0,0)$, $\theta_v(0,0,1)$ and $\alpha_v$. We  provide geometric descriptions of each of these classes. 
Proceeding exactly as in Subsection~\ref{subsec:og6rap} one defines 
 a  map 
\begin{equation}\label{emmastone}
\begin{matrix}
K_v(J) & \overset{\pi_v}{\lra} & |2\Theta_J|\cong\PP^3\\
[\cF] & \mapsto & \Supp_{\det}(\cF)
\end{matrix}
\end{equation}
Composing with the desingularization map one gets the Lagrangian fibration
\begin{equation}\label{fibrlagr}
\wt{\pi}_v\colon \wt{K}_v(J) \to |2\Theta_J|\cong\PP^3
\end{equation}
 Let 
 $\Lambda_v:=c_1\left(\pi_v^*\cO_{\PP^3}(1)\right)$, and let $\wt{\Lambda}_v$ be the pull-back  to $\wt{K}_v(J)$ of  $\Lambda_v$. 
Then
\begin{equation}
\wt{\theta}_v(0,0,-1)=\wt{\Lambda}_v
\end{equation}
by the same result quoted in Subsection~\ref{subsec:og6rap}.
Next,  let $\Theta_v\subset K_v(J)$ be the (reduced) divisor parametrizing sheaves $\cF$ such that $h^0(\cF)>0$ (note: this is a divisor because $\chi(J,\cF)=0$ for $[\cF]\in K_v(J)$), and  let 
$\wt{\Theta}_v$ be its pull-back  to $\wt{K}_v(J)$.
Since $\Theta_v$  is the zero locus of the canonical section of the determinant line bundle on $K_v(J)$, we have 
$\theta_v(1,0,0)=\cl(\Theta_v)$. Thus
\begin{equation}\label{tetateta}
\wt{\theta}_v(1,0,0)=\cl(\wt{\Theta}_v).
\end{equation}
Lastly $2\alpha_v=\cl(\wt{\Sigma}_v)$ where $\wt{\Sigma}_v(J)\subset \wt{K}_v(J)$ is the exceptional divisor of
the desingularization map $\wt{K}_v(J)\to K_v(J)$, see Subsetion~\ref{subsec:varog6}.  The conclusion is that 
 we have a direct sum decomposition
\begin{equation}\label{nersev}
\NS(\wt{K}_v(J))=\ZZ\cl(\wt{\Theta}_v)\oplus \ZZ\wt{\Lambda}_v \oplus  \ZZ\alpha_v.
\end{equation}
We record here the equalities (recall that $\wt{\theta}_v$ is an isometry of lattices)
\begin{equation}\label{pianiper}
q(\cl(\wt{\Theta}_v),\cl(\wt{\Theta}_v))=0,\quad q(\cl(\wt{\Theta}_v),\wt{\Lambda}_v)=1,\quad
q(\wt{\Lambda}_v,\wt{\Lambda}_v)=0.
\end{equation}
\subsection{Fourier-Mukai transform}\label{sec:foumuk}
\setcounter{equation}{0}
Let $\wh{J}=\Pic^0(J)$ be the dual of $J$, where $J$ is as in Subsection~\ref{subsec:og6nostre}, in particular~\eqref{picarduno} holds. Hence $\NS(\wh{J})=\ZZ \wh{h}$, where $\wh{h}$ is the unique principal polarization of $\wh{J}$. Since $\wh{J}$ is isomorphic to $J$ all the definitions and considerations of Subsection~\ref{subsec:og6nostre} apply to $\wh{J}$. More precisely, let  $\wh{v}:=(0,2\wh{h},0)$. We have the moduli space 
$M_{\wh{v}}(\wh{J},\wh{h})$, which we denote by $M_{\wh{v}}(\wh{J})$, and, upon choosing  a symmetric principal polarization $\Theta_{\wh{J}}$ of $\wh{J}$,  the singular symplectic variety
 $K_{\wh{v}}(\wh{J})$ and its  HK desingularization $\wt{K}_{\wh{v}}(\wh{J})$.  On $\wt{K}_{\wh{v}}(\wh{J})$ we have the divisor  
 $\wt{\Theta}_{\wh{v}}$  and the divisor classes    
$\wt{\Lambda}_{\wh{v}}$, $\alpha_{\wh{v}}$. 

Of course an isomorphism $J\overset{\sim}{\lra} \wh{J}$ mapping $\Theta_{J}$  to  $\Theta_{\wh{J}}$ 
determines an isomorphisms $\wt{K}_v(J) \overset{\sim}{\lra} \wt{K}_{\wh{v}}(\wh{J})$,
but we are interested  in a different birational mapping $\wt{K}_v(J) \dra \wt{K}_{\wh{v}}(\wh{J})$, given by a Fourier-Mukai transform.  More precisely, let 
$\cP$ be the Poincar\`e line bundle on $J\times\wh{J}$, and let $\sF\sM\colon D^{b}(J)\to D^{b}(\wh{J})$ be the Fourier-Mukai transform with kernel $\cP$. 

Let
\begin{equation}\label{pizzino}
U_v(J):=\{[\cF]\in K_v(J) \mid \text{$\cF$ polystable, $H^0(J,\cF\otimes\cL_y)=0$ for general $y\in\wh{J}$}\}.
\end{equation}
\begin{lmm}\label{lmm:indideb}
Keeping notation as above, the following properties hold.
\begin{enumerate}
\item[(A)]
Every  sheaf $\cF$ parametrized by a point of $U_v(J)$ satisfies $\WIT_1$ with respect to the Poincar\`e line bundle $\cP$, and hence  $\sF\sM(\cF)$ is a  sheaf shifted by $[-1]$.
\item[(B)]
The complement of $U_v(J)$ has codimension at least $2$ in $K_v(J)$. 
\item[(C)]
The set  $\Sigma_v(J)$ of strictly semistable intersects $U_v(J)$.
\item[(D)]
 If  $[\cF]\in (U_v(J)\setminus\Sigma_v(J))$,  the sheaf $\sF\sM(\cF)[1]$ is $\wh{h}$-stable. 
\item[(E)]
 If  $[\cF]\in \Sigma_v(J)\cap U_v(J)$,  the sheaf $\sF\sM(\cF)[1]$ is strictly $\wh{h}$-semistable. 
\end{enumerate}
\end{lmm}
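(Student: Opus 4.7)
The plan is to establish the five items with complementary techniques: (A) is a direct Fourier--Mukai base-change computation; (B) is a codimension count in the moduli space; (C) is an explicit construction of a polystable sheaf; and (D), (E) follow from (A) together with the standard preservation of (semi)stability under $\sF\sM$. The key insight throughout is that for sheaves in $U_v(J)$, purity of the support upgrades generic vanishing of $h^0(\cF\otimes\cL_y)$ to full vanishing of $R^0\sF\sM(\cF)$.

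For (A), I would write $\cF=\iota_{C,*}(\xi)$ where $C\subset J$ is a curve of class $2h$ and $\xi$ is a pure one-dimensional sheaf on $C$ with $\chi(\xi)=0$. Flat base change identifies $Rp_{\wh J*}(p_J^{*}\cF\otimes\cP)$ with $Rq_{*}(p_C^{*}\xi\otimes\cP_{|C\times\wh J})$, where $q\colon C\times\wh J\to\wh J$ is the second projection. Since the fibres of $q$ have dimension one, $R^{2}q_{*}=0$ is automatic. The sheaf $p_C^{*}\xi\otimes\cP_{|C\times\wh J}$ is pure of dimension two on $C\times\wh J$, and $q$ is flat, proper, with equi-dimensional fibres, so $R^{0}q_{*}$ is torsion-free on $\wh J$. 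The defining generic vanishing of $U_v(J)$ forces its generic rank to be zero, hence $R^{0}q_{*}=0$, establishing $\WIT_1$.

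For (B), the complement of $U_v(J)$ in $K_v(J)$ is the closed locus of points $[\cF]=[\iota_{C,*}(\xi)]$ for which $h^0(J,\cF\otimes\cL_y)>0$ for every $y\in\wh J$; equivalently, the image of the morphism $\wh J\to\Pic^{4}(C)$, $y\mapsto\xi\otimes\cL_{y|C}$, is contained in the theta divisor of the Jacobian of $C$. Stratifying by the choice of support curve in $|2\Theta_J|\cong\PP^{3}$ and combining the codimension-one condition imposed fibrewise with the two-dimensional range of $y\in\wh J$ yields the desired codimension-$2$ bound by a Brill--Noether type count. For (C), I would take $C_1,C_2$ two theta divisors on $J$ with $C_1+C_2\sim 2\Theta_J$ together with generic line bundles $\xi_i\in\Pic^{1}(C_i)$ satisfying $\sigma(c_2(\iota_{C_1,*}\xi_1\oplus\iota_{C_2,*}\xi_2))=0$ in $J$; the resulting $\cF_1\oplus\cF_2$ is a strictly semistable point of $K_v(J)$, and applying the same generic-vanishing analysis to each summand puts it in $U_v(J)\cap\Sigma_v(J)$.

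For (D) and (E), part (A) gives that $\sF\sM(\cF)[1]$ is a pure one-dimensional sheaf on $\wh J$, with Mukai vector the image of $v=(0,2h,0)$ under the rank-swapping Hodge isometry induced by $\sF\sM$; in particular it is again of the form $(0,2\wh h,0)$. By the Mukai--Yoshioka compatibility of the Fourier--Mukai transform with $\wh h$-slope stability for $\WIT_1$ sheaves, $h$-stability of $\cF$ transfers to $\wh h$-stability of $\sF\sM(\cF)[1]$, giving (D). For (E), each summand $\cF_i$ of a strictly semistable $\cF=\cF_1\oplus\cF_2\in\Sigma_v(J)\cap U_v(J)$ separately satisfies $\WIT_1$ by the argument of (A) applied to the Mukai vector $v_0=(0,h,0)$, and additivity of $\sF\sM$ yields $\sF\sM(\cF)[1]=\sF\sM(\cF_1)[1]\oplus\sF\sM(\cF_2)[1]$, a direct sum of two $\wh h$-stable sheaves of the same slope, hence strictly $\wh h$-semistable. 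I expect the hardest step to be the torsion-freeness argument in (A) when the support $C$ is non-integral, together with the careful book-keeping in the codimension count of (B); neither introduces fundamentally new ideas but both require some case analysis.
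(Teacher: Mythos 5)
Your items (A), (C), (D) and (E) are handled essentially as the paper intends (the paper disposes of (A) in one sentence and declares (C)--(E) straightforward); your flat base change plus torsion-freeness argument for (A) and your explicit polystable example for (C) are fine. The genuine gap is in (B), the one item the paper proves in detail. For a smooth $C\in|2\Theta_J|$ (genus $5$) the fiber $\pi_v^{-1}(C)$ is only a $3$-dimensional translate $P_C$ of an abelian subvariety of the $5$-dimensional $\Pic^4(C)$ (the constraint $\sigma(c_2(\cF))=0$ cuts it down), and the bad locus is $\{\xi\in P_C:\ \xi+T\subset W^0_4(C)\}$, where $T\subset\Pic^0(C)$ is the $2$-dimensional image of $\wh{J}$ under restriction. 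Your ``codimension-one condition imposed fibrewise combined with the two-dimensional range of $y$'' gives, at best, that $\bigcap_{t\in T}(W^0_4-t)$ has codimension $\ge 2$ in $\Pic^4(C)$; that does not bound its intersection with the $3$-fold $P_C$ by codimension $2$ \emph{in $P_C$}. What is actually needed is that the divisors $Z_y:=P_C\cap(W^0_4-\mathrm{res}(y))$, as $y$ varies, share no common component inside $P_C$, and this is not formal: a common component $W$ would only say $W+T\subset W^0_4$, which is not dimensionally absurd since $\dim(W+T)$ can equal $\dim W^0_4=4$. The paper closes this by descending to the quotient $f_C\colon C\to D=C/\pm$ of genus $3$: the constraint $\sigma(c_2)=0$ forces $\xi=f_C^{*}(\ov{\xi})$ with $\ov{\xi}\in\Pic^2(D)$, each $Z_y$ for $y\in\wh{J}[2]$ becomes the union $(\Theta_D+[\alpha_y])\ltxcup(\Theta_D+[\alpha_y\otimes\lambda_D])$ of translates of the irreducible theta divisor of $D$, and the distinctness of these translates as $y$ runs over $\wh{J}[2]$ yields the codimension-$2$ bound inside $\Pic^2(D)\cong P_C$. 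Some such mechanism is missing from your sketch.

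A second, smaller omission in (B): your stratification only treats smooth support curves, but the fibers of $\pi_v$ over the discriminant of $|2\Theta_J|$ (reducible curves $\tau_x^{*}(\Theta_J)+\tau_{-x}^{*}(\Theta_J)$ and curves with a node at a point of $J[2]$) sweep out divisors in $K_v(J)$, so one must separately check that the complement of $U_v(J)$ contains no divisor lying over the discriminant. The paper flags this step explicitly (and leaves it to the reader), but it cannot be skipped in a complete proof.
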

\begin{proof}
Item~(A) holds for $U_v(J)$ defined as in~\eqref{pizzino} because sheaves parametrized by $K_v(J)$ have one dimensional support. 

We prove that Item~(B) holds.  Let $C\in|2\Theta_J|$ be a smooth curve. We claim that the complement of  
$U_v(J)\cap\pi_v^{-1}(C)$ (see~\eqref{emmastone}) in $\pi_v^{-1}(C)$ has codimension at least $2$. In fact, let $f_C\colon C\to D$ be the quotient map for the $\ZZ/(2)$-action defined by multiplication by $(-1)$ in $J$. Thus $D$ is a smooth plane section of the Kummer surface associated to $J$ (see~\eqref{allevavisoni}). We have $f_{C,*}\cO_C=\cO_D\oplus\lambda_D$ where $\lambda_D$ is a non trivial line bundle whose square is trivial. Let $[\cF]\in\pi_v^{-1}(C)$, i.e.~$\cF=j_{C,*}(\xi)$ where $\xi$ is a  line bundle of degree $4$ on $C$ such that (here Chern classes are in the Chow ring)
\begin{equation}\label{sommazero}
0=\sigma(c_2(j_{C,*}(\xi)))=\sigma(C\cdot C-j_{C,*}(c_1(\xi)))=-\sigma(j_{C,*}(c_1(\xi)).
\end{equation}
The equalities in~\eqref{sommazero} give  that $\xi=f_C^{*}(\ov{\xi})$, where $\ov{\xi}$ is a line bundle on $D$ of degree $2$. Let $y\in\wh{J}[2]$, and let 
$\cL_y$ be the corresponding line bundle on $J$; then $\cL_{y|C}$ is invariant under multiplication by $(-1)$, and hence there exists a line bundle $\alpha_y$ on $D$ such that $\cL_{y|C}\cong f_C^{*}(\alpha_y)$. Hence
\begin{equation}\label{ludovisi}
H^0(C,\xi\otimes(\cL_{y|C}))=H^0(C,f_C^{*}(\ov{\xi}\otimes\alpha_y))=H^0(D,(\ov{\xi}\otimes\alpha_y)\oplus
(\ov{\xi}\otimes\alpha_y\otimes\lambda_D)).
\end{equation}
Let $\Theta_D\subset\Pic^2(D)$ be the natural theta divisor. The equalities in~\eqref{ludovisi} give that 
\begin{equation}
\pi_v^{-1}(C)\setminus U_v(J)\subset \bigcap\limits_{y\in\wh{J}[2]}\left((\Theta_D+[\alpha_y])+(\Theta_D+[\alpha_y\otimes\lambda_D])\right).
\end{equation}
Since $\Theta_D$ is irreducible, it follows that $\pi_v^{-1}(C)\setminus U_v(J)$ has codimension at least $2$ in 
$\pi_v^{-1}(C)$. In order to finish the proof of Item~(B) it suffices to prove the complement of $U_v(J)$ does not contain a  divisor mapping to one of the two irreducible components of the discriminant hypersurface in $|2\Theta_J|$, i.e.~the closure of the locus parametrizing curves $C\in|2\Theta_J|$ with a single node at a point of $J[2]$, and the locus parametrizing curves $C=\tau_x^{*}(\Theta_J)+\tau_{-x}^{*}(\Theta_J)$ where $x\in J$. This is easy, we leave details to the reader.

Items~(C), (D) and (E) are straighforward.
\end{proof}
Let $[\cF]\in U_v(J)$. Since 
$v(\sF\sM(\cF)[1])=(0,2\wh{h},0)$, the Fourier-Mukai transform defines a regular map $U_v(J)\to M_{\wh{v}}(\wh{J}) $. The image lies  in an Albanese fiber (we mean the map in~\eqref{franbei} with $J,v$ replaced by $\wh{J},\wh{v}$ respectively)) because every map from $K_v(J)$ to an abelian variety is constant. By considering the image of points in $\Sigma_v(J)\cap U_v(J)$ we get that it lands in 
$K_{\wh{v}}(\wh{J})$. By Items~(C) and~(E) in Lemma~\ref{lmm:indideb} we get a 
 a rational mapping 
\begin{equation*}
\varphi\colon \wt{K}_v(J) \dra \wt{K}_{\wh{v}}(\wh{J}).
\end{equation*}
The map $\varphi$ is birational; the inverse is given by the \lq\lq reverse\rq\rq\ Fourier Mukai transform.
\begin{prp}
Keeping notation as above, we have
\begin{equation}\label{scambio}
\varphi^{*}(\cl(\wt{\Theta}_{\wh{v}}))=\wt{\Lambda}_v,\quad 
\varphi^{*}(\wt{\Lambda}_{\wh{v}})=\cl(\wt{\Theta}_v),\quad \varphi^{*}(\alpha_{\wh{v}})=\alpha_v.
\end{equation}
\end{prp}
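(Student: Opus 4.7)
The plan is to determine $\varphi^{*}$ on each basis element of $\NS(\wt{K}_{\wh{v}}(\wh{J})) = \ZZ\cl(\wt{\Theta}_{\wh{v}}) \oplus \ZZ\wt{\Lambda}_{\wh{v}} \oplus \ZZ\alpha_{\wh{v}}$. Since $\varphi$ is birational between HK manifolds of type OG6, by Remark~\ref{rmk:stessocomm} $\varphi^{*}$ is a well-defined isomorphism of Picard groups preserving the BBF form, and can be computed on any Zariski dense open subset of $\wt{K}_v(J)$ whose complement has codimension $\geq 2$; Item~(B) of Lemma~\ref{lmm:indideb} guarantees that the preimage of $U_v(J)$ has such a complement.

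First I would handle $\alpha_{\wh{v}}$. By Items~(C)--(E) of Lemma~\ref{lmm:indideb}, the Fourier-Mukai transform sends strictly semistable sheaves in $\Sigma_v(J)\cap U_v(J)$ to strictly semistable sheaves in $\Sigma_{\wh{v}}(\wh{J})\cap U_{\wh{v}}(\wh{J})$. Since the symplectic desingularization $\wt{K}_v(J)\to K_v(J)$ is determined by the Lehn-Sorger local model along the strictly semistable locus, the birational map $\varphi$ identifies the exceptional divisors on a dense open subset. Hence $\varphi^{*}(\cl(\wt{\Sigma}_{\wh{v}})) = \cl(\wt{\Sigma}_v)$ in $\Pic$, and dividing by $2$ yields $\varphi^{*}(\alpha_{\wh{v}}) = \alpha_v$.

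Next, and crucially, I would compute $\varphi^{*}(\cl(\wt{\Theta}_{\wh{v}}))$. The key claim is that for a generic point $[\cF] = [\iota_{C,*}(\xi)]$ with $C\in|2\Theta_J|$ a smooth integral curve and $\xi$ a degree $4$ line bundle on $C$, one has $h^0(\wh{J},\sF\sM(\cF)[1]) > 0$ if and only if $0 \in C$. To see this I would use the projection formula
\[
R\Gamma(\wh{J},\sF\sM(\cF)) \;=\; R\Gamma\bigl(J, \cF \otimes^{L} Rp_{J,*}\cP\bigr)
\]
together with Mukai's computation $Rp_{J,*}\cP \cong \cO_0^{J}[-2]$ (which is $\sF\sM^{\wh{J}\to J}(\cO_{\wh{J}})$). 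When $0\notin C$ the right-hand side vanishes identically because $\cF$ is disjoint from the origin. When $0\in C$, a Koszul resolution of $\cO_0^{J}$ restricted to $C$ shows that $\cF \otimes^{L} \cO_0^J$ has cohomology $\CC$ in degrees $0$ and $-1$, supported at $\{0\}$; taking $R\Gamma$, shifting by $[-2]$, and then by $[1]$ gives $h^0(\wh{J},\sF\sM(\cF)[1]) = 1$. Thus, on a dense open set, $\varphi^{-1}(\wt{\Theta}_{\wh{v}}) = \wt{\pi}_v^{-1}(H_0)$, where $H_0\subset|2\Theta_J|$ is the hyperplane of curves through the origin, and consequently $\varphi^{*}(\cl(\wt{\Theta}_{\wh{v}})) = \wt{\Lambda}_v$.

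The final identity $\varphi^{*}(\wt{\Lambda}_{\wh{v}}) = \cl(\wt{\Theta}_v)$ follows formally. Writing $\varphi^{*}(\wt{\Lambda}_{\wh{v}}) = a\cl(\wt{\Theta}_v) + b\wt{\Lambda}_v + c\alpha_v$ and using that $\varphi^{*}$ is an isometry of the BBF form: orthogonality with $\varphi^{*}(\alpha_{\wh{v}}) = \alpha_v$ together with $(\alpha_v,\alpha_v)=-2$ forces $c = 0$; the equality $(\varphi^{*}\wt{\Lambda}_{\wh{v}}, \wt{\Lambda}_v) = (\wt{\Lambda}_{\wh{v}}, \cl(\wt{\Theta}_{\wh{v}})) = 1$ combined with~\eqref{pianiper} forces $a = 1$; and the vanishing $(\varphi^{*}\wt{\Lambda}_{\wh{v}})^2 = 0$ then forces $b = 0$. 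I expect the main obstacle to be the Fourier-Mukai computation in the third paragraph: the derived tensor product $\cF\otimes^{L}\cO_0^J$ must be handled via an explicit Koszul resolution to see the full Tor pattern, and the codimension-$2$ guarantee of Item~(B) is essential in ruling out spurious contributions along $\wt{\Sigma}_v$ to the class equality.
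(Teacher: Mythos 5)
Your argument is correct, but it takes a genuinely different route from the paper for the first two identities. The paper disposes of both $\varphi^{*}(\cl(\wt{\Theta}_{\wh{v}}))=\wt{\Lambda}_v$ and $\varphi^{*}(\wt{\Lambda}_{\wh{v}})=\cl(\wt{\Theta}_v)$ in one stroke by quoting the general formula $\varphi^{*}(\theta_{\wh{v}}(r,x\wh{h},s))=\theta_v(-s,xh,-r)$ for the behaviour of the Donaldson--Mukai--Le Potier classes under a Fourier--Mukai transform (Lemma~3.1 in Yoshioka), which is legitimate precisely because $U_v(J)$ has complement of codimension $\ge 2$; the third identity is deduced, as you do, from Items~(C) and~(E) of Lemma~\ref{lmm:indideb}. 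You instead prove the first identity by hand: the projection formula together with $Rp_{J,*}\cP\cong\cO_0[-2]$ reduces $h^0(\wh{J},\sF\sM(\cF)[1])$ to $\mathrm{Tor}_1^{\cO_J}(\cF,\cO_0)$, which is $\CC$ or $0$ according to whether $0\in\Supp_{\det}(\cF)$ or not, so that $\varphi^{-1}(\wt{\Theta}_{\wh{v}})$ coincides generically with $\wt{\pi}_v^{-1}(H_0)$; you then recover the second identity by pure lattice arithmetic from the first and third, using that $\varphi^{*}$ is a BBF isometry and the intersection numbers in~\eqref{pianiper}. Both routes are sound; the paper's is shorter and gives the full map on $v^{\bot}$, yours is self-contained and makes the geometric content of $\varphi^{*}(\cl(\wt{\Theta}_{\wh{v}}))=\wt{\Lambda}_v$ visible. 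One small point you should make explicit: the set-theoretic identification of $\varphi^{-1}(\wt{\Theta}_{\wh{v}})$ with $\wt{\pi}_v^{-1}(H_0)$ on a dense open set only yields $\varphi^{*}(\cl(\wt{\Theta}_{\wh{v}}))=m\,\wt{\Lambda}_v$ for some positive integer $m$; to conclude $m=1$ either observe that $\cl(\wt{\Theta}_{\wh{v}})$ is primitive and $\varphi^{*}$ is an isomorphism of N\'eron--Severi lattices so its image is primitive, or use that your Tor computation gives $h^0=1$ exactly at the generic point together with the reducedness of $\wt{\Theta}_{\wh{v}}$.
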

\begin{proof}
Since  the open subset
$U_v(J)$ has complement of codimension at least $2$ in $K_v(J)$, one has the equality  
$\varphi^{*}(\theta_{\wh{v}}(r,xh,s))=\theta_v(-s,xh,-r)$ by a well-known computation, see Lemma~3.1 in~\cite{Yos:moduli}.
This gives the first two equalities in~\eqref{scambio}. The third equality holds by Items~(C) and~(E) in Lemma~\ref{lmm:indideb}.
\end{proof}
\subsection{Translations of $\wt{K}_v(J)$ for $v=(0,2h,0)$}\label{sec:mongwand}
\setcounter{equation}{0}
The definition of the  action of $J[2]\times\wh{J}[2]$ on $\wt{K}_v(J)$ for $v=(0,2h,-2)$ extends verbatim to give  
an action of $J[2]\times\wh{J}[2]$ on $\wt{K}_v(J)$ for $v=(0,2h,0)$.

The proof that the group of automorphisms of a HK manifold of type OG6 is isomorphic to $(\ZZ/(2))^8$ was achieved by examining   $\wt{K}_v(J)$  for $v=(0,2h,0)$. In fact Mongardi and Wandel proved the following result.
\begin{thm}[Thm.~5.2 in~\cite{mon-wand}]\label{thm:transog6}
Keep notation as above, in particular  $v=(0,2h,0)$. Then the map $J[2]\times \wh{J}[2]\hra \Aut^0(\wt{K}_v(J))$ is an isomorphism of groups.
\end{thm}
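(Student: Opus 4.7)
The plan is to reduce the theorem to two independent assertions: that the map $J[2]\times\wh{J}[2]\hookrightarrow\Aut^0(\wt{K}_v(J))$ is well defined and injective, and that $|\Aut^0(X)|=2^8$ for every HK manifold $X$ of type OG6. First I would set up the map verbatim as in Subsection~\ref{sec:transrap}: for $x\in J[2]$ and $y\in\wh{J}[2]$, the pair $(\tau_x,\cL_y)$ defines a regular automorphism of $K_v(J)$ because $\tau_x^{*}(2\Theta_J)\sim 2\Theta_J$ (as $J[2]=\ker(\tau^{*}\otimes\cO_J(2\Theta_J))$), because $c_1(\cF\otimes\cL_y)=c_1(\cF)$, and because $\sigma(c_2(\tau_x(\cF)\otimes\cL_y))=0$ when $v=(0,2h,0)$. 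These automorphisms preserve the singular locus $\Sigma_v(J)$, so they lift uniquely to $\wt{K}_v(J)$.

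Next I would show that the image lies in $\Aut^0(\wt{K}_v(J))$. The construction extends to an action of the connected group $J\times\wh{J}$ on $\wt{M}_v(J)$, and any such action is homotopic to the identity, hence acts trivially on $H^2(\wt{M}_v(J);\ZZ)$. Since the restriction $H^2(\wt{M}_v(J);\ZZ)\to H^2(\wt{K}_v(J);\ZZ)$ is surjective (this uses that $\wt{K}_v(J)$ is an Albanese fiber), the induced action on $H^2(\wt{K}_v(J);\ZZ)$ is trivial. For injectivity I would argue at a generic point $[\cF]=[j_{C,*}(\xi)]\in K_v(J)$: if $\tau_x(\cF)\otimes\cL_y\cong\cF$ then $\tau_x(C)=C$ for a generic smooth $C\in|2\Theta_J|$, forcing $x=0$, and then $\xi\otimes\cL_{y|C}\cong\xi$ forces $\cL_{y|C}\cong\cO_C$; since the restriction map $\wh{J}[2]\to\Pic(C)$ is injective for generic $C$, one gets $y=0$.

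The hard part is surjectivity. Here I would rely on a general bound $|\Aut^0(X)|\le 2^8$ valid for \emph{any} HK manifold $X$ of type OG6, so that combined with $|J[2]\times\wh{J}[2]|=2^8$ and the injectivity proved above we conclude. The approach for the bound is cohomological: an automorphism $g\in\Aut^0(X)$ acts trivially on $H^2(X;\ZZ)$, hence on the whole Mukai-style lattice, and a careful weight-argument on $H^{*}(X;\ZZ)$ (using that $X$ is simply connected and that the cup-product pairing with $H^2$ is refined enough to detect translations in the underlying abelian surface) produces a homomorphism into a group of order at most $2^8$. An alternative route, closer to the moduli-theoretic perspective, is to fix a deformation from $X$ to $\wt{K}_v(J)$ and note that any $g\in\Aut^0(X)$ extends along the deformation (because $\Aut^0$ is locally constant by Hassett--Tschinkel), so it suffices to bound $|\Aut^0(\wt{K}_v(J))|$ via its action on the moduli space $M_v(J)$; stability forces every such $g$ to come from a pair $(x,y)\in J[2]\times\wh{J}[2]$.

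The main obstacle is the upper bound $|\Aut^0(X)|\le 2^8$, which is the delicate part of the cited result of Mongardi--Wandel. In a self-contained write-up I would devote most of the effort to this bound; the construction of the embedding and the verification that it lands in $\Aut^0$ are, by contrast, direct generalizations of the arguments already used for $v=(0,2h,-2)$ in Subsection~\ref{sec:transrap}.
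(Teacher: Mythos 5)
Your construction of the embedding and the verification that its image lies in $\Aut^0(\wt{K}_v(J))$ match what the paper does (the arguments of Subsection~\ref{sec:transrap} carried over verbatim to $v=(0,2h,0)$). The genuine gap is in surjectivity, which you correctly flag as the hard part but for which neither of your two proposed routes is an actual argument. The ``cohomological weight-argument'' producing a bound $|\Aut^0(X)|\le 2^8$ from the action on $H^{*}(X;\ZZ)$ is not available in the form you describe: an element of $\Aut^0$ acts trivially on $H^2$ by definition, and no mechanism is set up by which cup products against $H^2$ ``detect translations in the underlying abelian surface'' --- the whole difficulty is precisely that these automorphisms are invisible on $H^2$. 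Your alternative route (``stability forces every such $g$ to come from a pair $(x,y)$'') restates the conclusion rather than proving it.

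The mechanism the paper actually recalls from Mongardi--Wandel is geometric and tied to the model $\wt{K}_v(J)$ with $v=(0,2h,0)$: given $g\in\Aut^0(\wt{K}_v(J))$, one first produces $\lambda_x$ such that $\lambda_x\circ g$ maps each fiber of the Lagrangian fibration $\wt{\pi}_v\colon\wt{K}_v(J)\to|2\Theta_J|$ to itself; since $\lambda_x\circ g$ fixes the line bundle $\cO_{\wt{K}_v(J)}(\wt{\Theta}_v)$, its restriction to a smooth Lagrangian fiber $A_t$ is translation by an element $y\in\im(\wh{J}[2]\to\wh{A}_t[2])$, because the polarization induced by $\wt{\Theta}_v$ on $A_t$ has elementary divisors $(1,2,2)$; finally $y$ is independent of $t$, whence $g=\lambda_{-x}\circ\mu_y$. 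None of this appears in your proposal. A careful write-up must also engage with the point the paper makes in this very subsection: Lemma~5.4 of Mongardi--Wandel, asserting that $|\wt{\Theta}_v|$ consists of a single divisor, is false (the Fourier--Mukai computation in~\eqref{scambio} gives $h^0(\wt{K}_v(J),\cO_{\wt{K}_v(J)}(\wt{\Theta}_v))=4$), so one must check, as the paper does, that the surjectivity argument survives without that lemma.
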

Here
we recall one element in the proof of  Theorem~\ref{thm:transog6} because this gives us a chance to
correct a statement in~\cite{mon-wand}, and also to show that the error does not affect the truth of  Theorem~\ref{thm:transog6}. 
  Lemma~5.4 in op.cit.~states that the linear system $|\wt{\Theta}_v|$ consists of the single divisor $\wt{\Theta}_v$. This statement is wrong. In fact, by the second equality in~\eqref{scambio}, the pull-back $\varphi^{*}$ gives an isomorphism 
\begin{equation}
\varphi^{*}\colon H^0(K_{\wh{v}}(\wh{J}),\wt{\Lambda}_{\wh{v}}) \overset{\sim}{\lra} H^0(K_v(J),\wt{\Theta}_v),
\end{equation}
and     $h^0(K_{\wh{v}}(\wh{J}),\wt{\Lambda}_{\wh{v}})=h^0(K_{v}(J),\wt{\Lambda}_{v})=4$, see~\eqref{emmastone}.
(The reason why 
the proof of Lemma~5.4 in~\cite{mon-wand} is wrong is that the
divisor $\wt{\Theta}_v$ has an open dense subset which is birational to a $\PP^1$-fibration, but it fails to be  
 normal, and hence the hypothesis of  Lemma~2.5 in op.cit.~is not satisfied by 
$\wt{\Theta}_v$.)

Lastly, although Lemma~5.4 in~\cite{mon-wand} is wrong,  the statement of Theorem~\ref{thm:transog6} is valid. In fact 
Mongardi-Wandel prove that if $g\in\Aut^0(\wt{K}_v(J))$ then there exists  $\lambda_x$ such that $\lambda_x\circ g$
 maps each (Lagrangian) fiber of $\wt{\pi}$ to itself. Since $\lambda_x\circ g$ maps the line bundle 
$\cO_{\wt{K}_v(J)}(\wt{\Theta}_v)$ to itself, the restriction of $\lambda_x\circ g$ to  a smooth Lagrangian fiber $A_t$ is equal to the restriction of a translation by an element $y\in \im(\wh{J}[2]\to \wh{A}_t[2])$ (this holds because the  polarization of $\wt{\Theta}_v$ on $A_t$ has elementary divisors $(1,2,2)$).  Clearly $y$ is independent of $t$, i.e.~we have $g=\lambda_{-x}\circ \mu_y$.

\subsection{The commutator pairing for line bundles of divisibility $1$}\label{sec:divuno}
\setcounter{equation}{0}
In the present subsection we prove the part of the statement of Theorem~\ref{thm:compairog6} that refers to line bundles of divisibility $1$.
\begin{prp}\label{prp:comdivuno}
Let $X$ be a HK manifold of type $\OG$. Suppose that $L$ is a primitive line bundle on $X$ such that  $\divisore(L)=1$.  
\begin{enumerate}
\item
If  $q_X(L)$ is not divisible by $4$ then  $e^L$ is non degenerate.
\item
If  $q_X(L)$ is divisible by $4$ then  $\wh{T}(X)/\im(E_L)\cong (\ZZ/(2))^4$.
\end{enumerate}
\end{prp}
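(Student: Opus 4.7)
The plan is to mirror the approach of Subsection~\ref{subsec:theend}: use Eichler-type reduction plus deformation to replace an arbitrary $(X,L)$ by a concrete line bundle on $\wt{K}_v(J)$ with $v=(0,2h,0)$ (notation as in Subsection~\ref{subsec:og6nostre}), and then extract the commutator pairing from the representation on sections, using both the Lagrangian fibration $\wt{\pi}_v$ and the Fourier--Mukai birational dual $\varphi$ of Subsection~\ref{sec:foumuk}.

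By Proposition~\ref{prp:solitoeichler}, primitive classes in $\Lambda_{\OG6}$ of divisibility $1$ with the same BBF square lie in a single $\Ort^{+}$-orbit. Combining this with Verbitsky's global Torelli and the monodromy result of~\cite{mon-rap-mon-og6}, exactly as in the proof of Lemma~\ref{lmm:alice}, together with Remarks~\ref{rmk:variacom} and~\ref{rmk:stessocomm}, the proposition is reduced to computing $e^{L_m}$ for
\[
L_m:=\wt{\Lambda}_v+m\,\cl(\wt{\Theta}_v),\qquad m\in\ZZ,
\]
which are primitive of divisibility $1$ with $q_X(L_m)=2m$ by~\eqref{pianiper}, covering every admissible BBF square. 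The bilinearity relation~\eqref{tenspair} then splits the task into computing $e^{\wt{\Lambda}_v}$ and $e^{\cl(\wt{\Theta}_v)}$ separately.

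For $e^{\wt{\Lambda}_v}$, pull-back along $\wt{\pi}_v$ identifies $H^0(\wt{K}_v(J),\wt{\Lambda}_v)$ with $H^0(\PP^3,\cO_{\PP^3}(1))$, i.e.~with the Schr\"odinger representation of the classical theta group of $\cO_J(2\Theta_J)$. The tensorization $\mu_y$ by $\cL_y$ with $y\in\wh{J}[2]$ preserves the determinantal support of every sheaf, so fixes every fiber of $\wt{\pi}_v$; its canonical lift to $\cG(\wt{\Lambda}_v)$ therefore acts on $H^0(\wt{\Lambda}_v)$ by a scalar, and by Remark~\ref{rmk:unasez} the pairing $e^{\wt{\Lambda}_v}$ vanishes on all pairs with an entry in $\wh{J}[2]$. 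On $J[2]\times J[2]$, the action of $\lambda_x$ on $|2\Theta_J|$ is the classical projective Heisenberg representation, so $e^{\wt{\Lambda}_v}$ restricts to the Weil pairing, $\{\pm 1\}$-valued and non-degenerate. For $e^{\cl(\wt{\Theta}_v)}$, the identity $\varphi^{*}(\wt{\Lambda}_{\wh{v}})=\cl(\wt{\Theta}_v)$ from~\eqref{scambio}, together with Remark~\ref{rmk:stessocomm}, transports the question to computing $e^{\wt{\Lambda}_{\wh{v}}}$ on $\wt{K}_{\wh{v}}(\wh{J})$. Since Fourier--Mukai interchanges translations by $x\in J$ with tensorizations by $\cP_x\in\Pic^0(\wh{J})$, the induced isomorphism $T(\wt{K}_v(J))\overset{\sim}{\to}T(\wt{K}_{\wh{v}}(\wh{J}))$ is the swap $(x,y)\mapsto(y,x)$ under $\wh{\wh{J}}=J$, and the previous computation applied to $\wt{K}_{\wh{v}}(\wh{J})$ yields: $e^{\cl(\wt{\Theta}_v)}$ is the Weil pairing on $\wh{J}[2]\times\wh{J}[2]$ and trivial on all pairs with an entry in $J[2]$.

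Putting the pieces together via bilinearity, $e^{L_m}$ equals the Weil pairing on $J[2]\times J[2]$, its $m$-th power on $\wh{J}[2]\times\wh{J}[2]$, and vanishes on the mixed part. Since the Weil pairing takes values in $\{\pm 1\}$, for $m$ odd the total pairing is non-degenerate ($q_X(L_m)\equiv 2\pmod 4$, proving~(1)), and for $m$ even the kernel of $E_{L_m}$ is precisely $\wh{J}[2]$ of order $2^4$, giving $\wh{T}(X)/\im(E_{L_m})\cong(\ZZ/2)^4$ and proving~(2). I expect the main technical point to be the Fourier--Mukai bookkeeping: one must verify rigorously that $\varphi$ intertwines $J[2]\times\wh{J}[2]\subset\Aut^0(\wt{K}_v(J))$ with $\wh{J}[2]\times\wh{\wh{J}}[2]\subset\Aut^0(\wt{K}_{\wh{v}}(\wh{J}))$ via the expected swap (including compatibility with $\wh{\wh{J}}=J$), and that no projective ambiguity in the scalar action of tensorizations spoils the asserted triviality of the mixed pairing.
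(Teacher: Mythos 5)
Your proposal is correct and follows essentially the same route as the paper: deformation via Eichler's criterion, Torelli and the monodromy computation to a line bundle of the form $a\wt{\Lambda}_v+b\,\cl(\wt{\Theta}_v)$ on $\wt{K}_v(J)$ with $v=(0,2h,0)$ (Lemma~\ref{lmm:circeo}), computation of $e^{\wt{\Lambda}_v}$ from the Schr\"odinger representation on $H^0(J,\cO_J(2\Theta_J))^{\vee}$ and of $e^{\cl(\wt{\Theta}_v)}$ by transport along the Fourier--Mukai birational map (Proposition~\ref{prp:thaddeus}), and then bilinearity (Corollary~\ref{crl:casoab}). The only cosmetic difference is that you normalize to $a=1$, $b=m$, whereas the paper keeps general coprime $a,b$ and reads off the parity condition from $q=2ab$.
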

Before proving Proposition~\ref{prp:comdivuno} we describe the commutator pairing of the line bundles  on $\wt{K}_v(J)$ given by $\pi^{*}(\cO_{\PP^3}(1))$ and
$\cO_{\wt{K}_v(J)}(\wt{\Theta})$. 
\begin{prp}\label{prp:thaddeus}
Keep notation as in Subsections~\ref{subsec:og6nostre}, \ref{sec:foumuk} and~\ref{sec:mongwand}.
The commutator pairing of $\pi_v^{*}(\cO_{\PP^3}(1))$ has kernel $\wh{J}[2]$ and is non degenerate on $J[2]$.
Similarly, the commutator pairing of $\cO_{\wt{K}_v(J)}(\wt{\Theta}_v)$ has kernel $J[2]$ and is non degenerate on 
$\wh{J}[2]$.  
\end{prp}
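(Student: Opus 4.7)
My plan is to handle the two line bundles in parallel: for $L_1:=\pi_v^{*}(\cO_{\PP^3}(1))$ I would argue directly via the Lagrangian fibration $\wt\pi_v$, and for $L_2:=\cO_{\wt{K}_v(J)}(\wt\Theta_v)$ I would transport the first computation along the Fourier--Mukai birational map $\varphi$ of Subsection~\ref{sec:foumuk}.

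For $L_1$, the key observations are that (i) $\mu_y$ preserves every fiber of $\wt\pi_v$ for $y\in\wh{J}[2]$, because tensoring a pure one-dimensional sheaf by an invertible sheaf does not change its $0$-th Fitting ideal, hence does not change $\Supp_{\det}$; and that (ii) $\lambda_x$, for $x\in J[2]$, descends to the translation $\tau_{-x}^{*}$ on $|2\Theta_J|=\PP^3$, which is the classical projective action of $J[2]$ on $\PP H^0(J,\cO_J(2\Theta_J))$ coming from the theta group $\cG(\cO_J(2\Theta_J))$. Combined with the standard equality $\wt\pi_{v,*}\cO_{\wt{K}_v(J)}=\cO_{\PP^3}$ (Lagrangian fibration with connected fibers onto a smooth base), this yields a canonical identification
\[
H^0(\wt{K}_v(J),L_1) \;=\; H^0(\PP^3,\cO_{\PP^3}(1)) \;=\; H^0(J,\cO_J(2\Theta_J))
\]
of projective $T(\wt{K}_v(J))$-representations, with $\wh{J}[2]$ acting by scalars and $J[2]$ acting through the Schr\"odinger representation of $\cG(\cO_J(2\Theta_J))$. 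Formula~\eqref{lalaland} of Remark~\ref{rmk:unasez} then reads off $e^{L_1}$: it is trivial on $\wh{J}[2]$ and restricts on $J[2]=K(\cO_J(2\Theta_J))$ to the Weil pairing $e_2$, which is non-degenerate. Since $T(\wt{K}_v(J))=J[2]\oplus\wh{J}[2]$, this forces $\ker E^{L_1}=\wh{J}[2]$ exactly.

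For $L_2$, the identity $\varphi^{*}\wt\Lambda_{\wh v}=\cl(\wt\Theta_v)$ from~\eqref{scambio} together with Remark~\ref{rmk:stessocomm} identifies $e^{L_2}$ with the commutator pairing of $\wh\pi_{\wh v}^{*}(\cO_{\PP^3}(1))$ on $\wt{K}_{\wh v}(\wh J)$, via the isomorphism $T(\wt{K}_v(J))\overset{\sim}{\to}T(\wt{K}_{\wh v}(\wh J))$ induced by $\varphi$. Using the standard equivariance of the Poincar\'e kernel, $\sF\sM(\tau_x^{*}\cF)\cong\sF\sM(\cF)\otimes\cP_x[1]$ and $\sF\sM(\cF\otimes\cL_y)\cong\tau_y^{*}\sF\sM(\cF)[1]$, I would show that this isomorphism interchanges translations and tensorizations: the image of $\lambda_x$ is the tensorization $\mu_x$ on the $\wh J$-side (with $x\in J$ identified with an element of $\wh{\wh J}$), and the image of $\mu_y$ is the translation $\lambda_y$ on the $\wh J$-side. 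Applying the just-proved statement for $L_1$ to $\wh J$ gives kernel equal to the tensorization subgroup (which corresponds back to $J[2]$ on the $J$-side) and non-degeneracy on the translation subgroup (which corresponds back to $\wh J[2]$); this yields exactly kernel $J[2]$ and non-degeneracy on $\wh J[2]$ for $L_2$, as required.

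The main obstacle I expect is making the $\varphi$-intertwining of $T$-actions rigorous, since $\varphi$ is only birational and interacts non-trivially with the singular and indeterminacy loci. I would follow the template of Proposition~\ref{prp:sollevo}: verify the intertwining on a $T$-invariant open dense subset where both $\varphi$ and the actions are regular (using the classical equivariance identities for $\sF\sM$ applied to the points of $U_v(J)$ in the sense of Lemma~\ref{lmm:indideb}), and then conclude at the level of commutator pairings via Remark~\ref{rmk:stessocomm}. Modulo this bookkeeping, the rest is classical theta theory on $J$ together with Remark~\ref{rmk:unasez}.
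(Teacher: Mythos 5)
Your proposal is correct and follows essentially the same route as the paper: identify $H^0$ of the Lagrangian line bundle with $H^0(J,\cO_J(2\Theta_J))^{\vee}$ so that $\wh{J}[2]$ lands in the kernel via Remark~\ref{rmk:unasez} while $J[2]$ acts through the Schr\"odinger representation of $\cG(\cO_J(2\Theta_J))$, and then transport the result to $\cO_{\wt{K}_v(J)}(\wt{\Theta}_v)$ via the Fourier--Mukai switch of~\eqref{scambio}, which interchanges translations and tensorizations. Your extra care about the intertwining of the $T$-actions under the birational map $\varphi$ (and the minor point that $H^0(\PP^3,\cO_{\PP^3}(1))$ is the \emph{dual} of $H^0(J,\cO_J(2\Theta_J))$, which is immaterial for a $2$-torsion pairing) only makes explicit what the paper leaves implicit.
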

\begin{proof}
Let us prove the first statement of the lemma. By~\eqref{emmastone} we have an identification
\begin{equation}
H^0(\wt{K}_v,\pi_v^{*}(\cO_{\PP^3}(1))\cong H^0(J, \cO_J(2\Theta_J))^{\vee}.
\end{equation}
Since $\wh{J}[2]$ acts trivially  on $\PP(H^0(J, \cO_J(2\Theta_J))^{\vee})$, it follows that  $\wh{J}[2]$ is in the  kernel of the commutator pairing of 
$\pi_v^{*}(\cO_{\PP^3}(1))$, see Remark~\ref{rmk:unasez}. 
On the other hand, let  $\cG( J[2])<\cG(\pi_v^{*}(\cO_{\PP^3}(1)))$ be the inverse image of $J[2]$ under the natural homomorphism 
$\cG(\pi_v^{*}(\cO_{\PP^3}(1)))\to \Aut^0(\wt{K}_v(J))$. Since the action of $\cG( J[2])$ on $ H^0(J, \cO_J(2\Theta_J))^{\vee}$  is identified with the action of the theta group of $\cO_J(2\Theta_J)$ on $H^0(J, \cO_J(2\Theta_J))^{\vee}$, which is the Schr\"odinger representation, 
it follows that the commutator pairing of 
$\pi_v^{*}(\cO_{\PP^3}(1))$  is non degenerate on $J[2]$. 

The second statement of the lemma follows from the first one (that we have just proved) because of the Fourier Mukai transform discussed in Subsection~\ref{sec:foumuk}. In fact, the switching in Equation~\eqref{scambio} gives a natural isomorphism
\begin{equation}\label{lovelynight}
\varphi^{*}\colon H^0(\wt{K}_{\hat{v}}(\wh{J}),\pi_{\hat{v}}^{*}(\cO_{\PP^3}(1))\overset{\sim}{\lra} 
H^0(\wt{K}_{v}(J), \cO_{\wt{K}_v(J)}(\wt{\Theta}_v))^{\vee}.
\end{equation}
Since the space of sections is not trivial, the commutator pairing can be recovered by
the action of the theta group on the space of sections. The action as just been described, provided one keeps in mind that 
$J=\wh{\wh{J}}$. 

\end{proof}
\begin{crl}\label{crl:casoab}
With notation as above, let $L$ be a primitive line bundle on $\wt{K}_v(J)$ such that 
\begin{equation}
c_1(L)=a \wt{\Lambda}_v+b \cl(\wt{\Theta}_v).
\end{equation}
\begin{enumerate}
\item
If  $a,b$ are both odd, i.e.~$q(L)$ is not divisible by $4$ (see~\eqref{pianiper}), then  $e^L$ is non degenerate.
\item
If one among $a,b$ is even, i.e.~$q(L)$ is divisible by $4$, then  $\wh{T}(\wt{K}_v(J))/\im(E_L)$ is isomorphic to  $(\ZZ/(2))^4$.
\end{enumerate}
\end{crl}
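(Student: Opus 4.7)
The plan is to reduce the computation of $e^L$ to the two pairings already understood in Proposition~\ref{prp:thaddeus} via the tensor product formula~\eqref{tenspair}, and then exploit the fact that $T(\wt{K}_v(J))\cong J[2]\times\wh{J}[2]$ is $2$-torsion so that all exponents are read modulo $2$.

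First I would record the quadratic data. By the intersection numbers in~\eqref{pianiper}, if $c_1(L)=a\wt{\Lambda}_v+b\,\cl(\wt{\Theta}_v)$ then $q(L)=2ab$; hence $4\mid q(L)$ if and only if $ab$ is even, while primitivity of $L$ (i.e.\ of $c_1(L)$) rules out $a$ and $b$ being simultaneously even. Since $\Pic(\wt{K}_v(J))=\NS(\wt{K}_v(J))$, the line bundle $L$ is isomorphic to $\pi_v^{*}(\cO_{\PP^3}(1))^{\otimes a}\otimes\cO_{\wt{K}_v(J)}(\wt{\Theta}_v)^{\otimes b}$, so~\eqref{tenspair} yields
\begin{equation*}
e^L(\alpha,\beta)=e^{\pi_v^{*}\cO(1)}(\alpha,\beta)^{a}\cdot e^{\cO(\wt{\Theta}_v)}(\alpha,\beta)^{b}.
\end{equation*}
Because $T(\wt{K}_v(J))$ has exponent $2$ both pairings take values in $\{\pm1\}$, so only the parities of $a$ and $b$ matter.

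Next I would apply Proposition~\ref{prp:thaddeus}. Write $\alpha=(x_1,y_1)$, $\beta=(x_2,y_2)$ with $x_i\in J[2]$, $y_i\in\wh{J}[2]$. The kernel statement gives
\begin{equation*}
e^{\pi_v^{*}\cO(1)}(\alpha,\beta)=w_J(x_1,x_2),\qquad e^{\cO(\wt{\Theta}_v)}(\alpha,\beta)=w_{\wh{J}}(y_1,y_2),
\end{equation*}
where $w_J$, $w_{\wh{J}}$ are non-degenerate skew-symmetric pairings on $J[2]$, $\wh{J}[2]$ respectively. So
\begin{equation*}
e^L(\alpha,\beta)=w_J(x_1,x_2)^{a}\cdot w_{\wh{J}}(y_1,y_2)^{b}.
\end{equation*}

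For Item~(1), with $a,b$ both odd, this becomes $w_J(x_1,x_2)\cdot w_{\wh{J}}(y_1,y_2)$. Testing $\alpha=(x_1,y_1)$ against $(x_2,0)$ for all $x_2\in J[2]$ forces $x_1=0$ by non-degeneracy of $w_J$; testing against $(0,y_2)$ then forces $y_1=0$. Hence $\ker E_L=0$ and $e^L$ is non-degenerate. For Item~(2), primitivity plus $4\mid q(L)$ forces exactly one of $a,b$ to be even, so one factor trivializes and the surviving pairing is either $w_J$ (pulled back via the projection to $J[2]$) or $w_{\wh{J}}$; in either case the kernel of $E_L$ is the complementary factor, of order $2^{4}$, so $\wh{T}(\wt{K}_v(J))/\im(E_L)$ has order $2^{4}$. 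Since the quotient is $2$-torsion, it is isomorphic to $(\ZZ/(2))^{4}$.

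The only non-routine point is the identification, inside $T(\wt{K}_v(J))=J[2]\times\wh{J}[2]$, of the two pairings of Proposition~\ref{prp:thaddeus} as pull-backs of the Weil pairings on the factors. This is what guarantees that $\wh{J}[2]$ actually lies in the kernel of $e^{\pi_v^{*}\cO(1)}$ (not merely that the pairing is non-degenerate on $J[2]$), and symmetrically for $e^{\cO(\wt{\Theta}_v)}$. This already follows from the proof of Proposition~\ref{prp:thaddeus}: the pairing restricted to the complementary factor is trivial because that factor acts trivially on the relevant space of global sections, cf.\ Remark~\ref{rmk:unasez}. Once this is noted, everything else is a direct order count in $(\ZZ/(2))^{8}$.
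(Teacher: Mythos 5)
Your proposal is correct and follows exactly the route of the paper, whose proof is a one-line appeal to Proposition~\ref{prp:thaddeus}, multiplicativity of the commutator pairing (Equation~\eqref{tenspair}), and the fact that $T(\wt{K}_v(J))$ is killed by $2$. You have simply written out the details (the computation $q(L)=2ab$ from~\eqref{pianiper}, the identification of $e^L$ as an orthogonal sum of pairings pulled back from the factors $J[2]$ and $\wh{J}[2]$, and the order count), all of which are accurate.
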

\begin{proof}
The result  follows at once from Proposition~\ref{prp:thaddeus}, multiplicativity of the commutator pairing, and the fact that multiplication by $2$ kills every 
element of $T(\wt{K}_v(J))$. 
\end{proof}
The result below is analogous to Lemma~\ref{lmm:alice}. We omit the proof because   it is analogous to the proof of 
Lemma~\ref{lmm:alice}.
\begin{lmm}\label{lmm:circeo}
Let  $X$ be a HK manifold of type $\OG$ carrying   a primitive line bundle $L$ of divisibility $1$. There exist a family  $f\colon \cX\to T$ of HK manifolds  over a connected base $T$, points $t_0,t_1\in T$ and a line bundle $\cL$ on $\cX$ with the following properties:
\begin{enumerate}
\item[(a)]
 The fiber $X_{t_0}:=f^{-1}(t_0)$ is isomorphic to $X$, and  
  $L_{t_0}:=\cL_{|X_{t_0}}$  is isomorphic to $L$. 
\item[(b)]
 The fiber $X_{t_1}:=f^{-1}(t_1)$ is birational to  $\wt{K}_v(J)$ where $J$ is a principally polarized abelian surface as in Subsection~\ref{subsec:og6nostre}, $v=(0,2h,o)$ and 
\begin{equation}
c_1(L_{t_0})=a \wt{\Lambda}_v+b \cl(\wt{\Theta}_v)
\end{equation}
for suitable $a,b$.
\end{enumerate}
\end{lmm}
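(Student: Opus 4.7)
The plan is to reproduce the argument of Lemma~\ref{lmm:alice} essentially verbatim, changing only the description of the target class on the model $\wt{K}_v(J)$.

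First I would exhibit a candidate class. By the pairing relations in~\eqref{pianiper} the classes $\wt{\Lambda}_v$ and $\cl(\wt{\Theta}_v)$ span a rank-$2$ isotropic sublattice of $\NS(\wt{K}_v(J))$ with cross pairing equal to $1$, and by~\eqref{reticolog6} the lattice $\wt{\theta}_v(v^{\bot})$ is saturated in $H^2(\wt{K}_v(J);\ZZ)$ with orthogonal complement $\ZZ\alpha_v$. Hence for any coprime integers $a,b$ the class $c:=a\wt{\Lambda}_v+b\cl(\wt{\Theta}_v)$ is primitive in $H^2$, has divisibility $1$, and has BBF square $2ab$. Writing $q_X(L)=2n$, the pair $(a,b)=(1,n)$ satisfies $\gcd(a,b)=1$ and $ab=n$, so a class of the prescribed divisibility and square always exists in the required two-term form.

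Second I would invoke Eichler's criterion. By Proposition~\ref{prp:solitoeichler}, any two primitive elements of $\Lambda_{\OG6}$ of divisibility $1$ and equal square lie in the same $\Ort^{+}(\Lambda_{\OG6})$-orbit. After fixing a marking $\varphi\colon H^2(X;\ZZ)\overset{\sim}{\lra}\Lambda_{\OG6}$ with $[(X,\varphi)]$ in a chosen connected component $\gM^0$ of the marked moduli space, the image $\varphi(c_1(L))$ is therefore $\Ort^{+}$-equivalent to the image of $c$ under some marking $\psi$ of $\wt{K}_v(J)$, after absorbing an overall sign by replacing $(a,b)$ with $(-a,-b)$ if necessary, which does not affect the statement.

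Third I would close with the global-Torelli step, mimicking Lemma~\ref{lmm:alice}: by the Mongardi-Rapagnetta monodromy theorem the monodromy group of $H^2$ is all of $\Ort^{+}(\Lambda_{\OG6})$, so the Eichler equivalence is realized by parallel transport; the Hausdorffized period map $\gM^0_{\text{H}}\overset{\sim}{\lra}\cD$ is an isomorphism, and the hyperplane section $c^{\bot}\cap\cD$ on the relevant period component is connected, so any arc joining $\cP(X,\varphi)$ to $\cP(\wt{K}_v(J),\psi)$ inside it lifts to an arc in $\gM^0_{\text{H}}$. Pulling back the universal family and the flat extension of $c$ to a line bundle produces the sought family $f\colon\cX\to T$ together with a line bundle $\cL$; at $t_0$ the pair $(X_{t_0},L_{t_0})$ is identified with $(X,L)$, while at $t_1$ the fiber is birational to $\wt{K}_v(J)$ with $c_1(L_{t_1})=a\wt{\Lambda}_v+b\cl(\wt{\Theta}_v)$, the birationality (rather than isomorphism) at $t_1$ reflecting the collapsing of non-separated points of $\gM^0$ under Hausdorffization.

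I expect no real obstacle beyond the first step, i.e.~confirming that a primitive divisibility-$1$ class of prescribed even square can always be written as $a\wt{\Lambda}_v+b\cl(\wt{\Theta}_v)$ on $\wt{K}_v(J)$; once this is in hand the Torelli plus Eichler-orbit package is identical to the one already used for the divisibility-$2$ case, so the proof should be strictly shorter than that of Lemma~\ref{lmm:alice} (there is no analogue of the three-case split of Proposition~\ref{prp:vettog6} to manage).
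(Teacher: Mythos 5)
Your proposal is correct and follows exactly the route the paper intends: the paper omits the proof of this lemma precisely because it is the argument of Lemma~\ref{lmm:alice} run again, and your adaptation (realizing any even square $2n$ as $a\wt{\Lambda}_v+b\cl(\wt{\Theta}_v)$ with $(a,b)=(1,n)$, which is primitive of divisibility $1$ since the span of $\wt{\Lambda}_v$ and $\cl(\wt{\Theta}_v)$ is a unimodular hyperbolic plane by~\eqref{pianiper}, then applying Eichler's criterion via Proposition~\ref{prp:solitoeichler}, the Mongardi--Rapagnetta monodromy theorem, and the Hausdorffized period map) is the same package. The sign ambiguity is correctly absorbed into the choice of $(a,b)$, matching how Lemma~\ref{lmm:alice} records it with a $\pm$.
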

\begin{proof}[Proof of Proposition~\ref{prp:comdivuno}]
The proposition follows at once from Corollary~\ref{crl:casoab}, Lemma~\ref{lmm:circeo}, and invariance of the commutator pairing under deformation (see Remark~\ref{rmk:variacom}) and birational maps (see Remark~\ref{rmk:stessocomm}).
\end{proof}
\section{The commutator pairing for certain rank $4$ vector bundles}\label{sec:rango4}
\setcounter{equation}{0}
\subsection{The computation}
\setcounter{equation}{0}
We recall the setting of Theorem~\ref{thm:heisenrg4}. Let  $e$ be a positive integer such that $e\equiv -6 \pmod{16}$, and  
let $(M,h)$ be a general polarized HK fourfold of Kummer type with $q_M(h)=e$ and the divisibility of $h$ is $2$. 
In~\cite{og:modonkum} we have shown that there exists a slope stable rank $4$ vector bundle $\cF$ on $M$ such that
\begin{equation}
\det\cF\cong\cO_M(h),\quad \Delta(\cF):=8c_2(\cF)-3c_1(\cF)^2=c_2(M).
\end{equation}
We have also proved that $g^{*}(\cF)\cong\cF$ for every $g\in\Aut^0(M)$, and
hence the theta group $\cG(\cF)$ is defined.
\begin{thm}\label{thm:giulia}
Keeping notation as above, we have
\begin{equation}\label{maryclaire}
\wh{T}(M)/\im(E_{\cF})\cong 
\begin{cases}
0 & \text{if $q_M(c_1(\cF))$ is not divisible by $3$,} \\
(\ZZ/(3))^2 & \text{if $q_M(c_1(\cF))$ is  divisible by $3$.}
\end{cases}
\end{equation}
\end{thm}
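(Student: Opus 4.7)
The plan is to reduce the commutator pairing of the rank $4$ bundle $\cF$ to that of its determinant line bundle $L:=\det\cF=\cO_M(h)$, the latter case being covered by Theorem~\ref{thm:compairkum}. The key observation will be that taking determinants defines a group homomorphism
\begin{equation*}
\begin{matrix}
D\colon\cG(\cF) & \lra & \cG(L) \\
(g,\varphi) & \mapsto & (g,\det\varphi)
\end{matrix}
\end{equation*}
which is compatible with the projection to $T(M)$ and whose restriction to the central $\CC^{*}$ is the fourth-power map $z\mapsto z^{4}$. That $D$ respects the product rule~\eqref{prodinth} is a direct consequence of the multiplicativity of $\det$ under composition together with its compatibility with pull-back; this is the one point in the argument that deserves a careful, explicit check.

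Applying $D$ to a commutator in $\cG(\cF)$ and comparing with the commutator of the image in $\cG(L)$ would then yield the identity
\begin{equation*}
e^{L}(g_1,g_2)=e^{\cF}(g_1,g_2)^{4}\qquad\forall\, g_1,g_2\in T(M).
\end{equation*}
Since $M$ is of type $\Kum_2$, the group $T(M)\cong(\ZZ/3)^{4}$ is annihilated by $3$, so the values of $e^{\cF}$ lie in the group $\mu_{3}\subset\CC^{*}$ of cube roots of unity. Because $4\equiv 1\pmod 3$, raising to the fourth power is the identity on $\mu_{3}$, and the displayed formula collapses to $e^{\cF}=e^{L}$ as pairings on $T(M)\times T(M)$. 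In particular $\im(E_{\cF})=\im(E_{L})\subset\wh{T}(M)$, so the statement for $\cF$ becomes the statement for the primitive line bundle $L=\cO_M(h)$.

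It then remains to invoke Theorem~\ref{thm:compairkum} for $L$. With $n=2$, $\divisore(L)=2$ and $q_M(L)=e\equiv -6\pmod{16}$, one has $\ord_2(n+1)=0<1=\ord_2(\divisore(L))$, hence $\divisore_0(L)=1$; and since $\gcd(2,3)=1$ the congruence class of $e$ modulo $3$ coincides with that of $e/2$, giving
\begin{equation*}
m(L)=\gcd\Bigl\{3,\tfrac{e}{2}\Bigr\}=\begin{cases} 3 & \text{if } 3\mid e,\\ 1 & \text{otherwise.}\end{cases}
\end{equation*}
Substituting these values into the isomorphism~\eqref{conucleo} produces exactly the dichotomy~\eqref{maryclaire}. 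In summary, the entire proof reduces to the verification that $D$ is a well-defined homomorphism of central extensions with the correct effect on scalars; once that is in place, the arithmetic is automatic.
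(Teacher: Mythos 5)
Your argument is correct, and it takes a genuinely different route from the paper. You reduce everything to the determinant: the assignment $(g,\phi)\mapsto(g,\det\phi)$ is indeed a homomorphism $\cG(\cF)\to\cG(\det\cF)$ of central extensions of $T(M)$ (multiplicativity of $\det$ under composition and its compatibility with pull-back give exactly the product rule~\eqref{prodinth}), it is the map $z\mapsto z^{4}$ on the central $\CC^{*}$, and hence $e^{\det\cF}=(e^{\cF})^{4}$. Since $T(M)\cong(\ZZ/(3))^{4}$ forces both pairings to take values in the cube roots of unity and $4\equiv 1\pmod 3$, you get $e^{\cF}=e^{L}$ for $L=\cO_M(h)$, and Theorem~\ref{thm:compairkum} (with $n=2$, $\divisore(L)=2$, so $\divisore_0(L)=1$ and $m(L)=\gcd\{3,e/2\}$) gives~\eqref{maryclaire}; your parenthetical that $e$ and $e/2$ have the same class mod $3$ is off by a sign, but divisibility by $3$ is all that is used, so the conclusion stands. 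The paper instead exploits the explicit construction of $\cF$ as $\wt{\rho}_{*}\nu^{*}(\ov{\cL})$ for a degree $2$ isogeny $f\colon B\to A$: it identifies $H^0(K_2(A),\cF)$ equivariantly with $H^0(K_2(B),\ov{\cL})$ under $T(K_2(B))=B[3]\cong A[3]=T(K_2(A))$ and reads the pairing off the action on this nonzero space of sections via Remark~\ref{rmk:unasez}. Your approach is shorter, bypasses the construction entirely, and proves the more general principle that the commutator pairing of any simple sheaf satisfying~\eqref{keyassumption} whose rank is invertible modulo the exponent of $T(X)$ coincides with that of its determinant; the paper's approach buys an explicit identification of the theta-group representation $H^0(M,\cF)$ (including its dimension $3\binom{a+2}{2}$), which is what feeds into the subsequent discussion of the map to $\PP(H^0(M,\cF)^{\vee})$.
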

\begin{proof}
In~\cite{og:modonkum} we obtained the sheaves $\cF$ by deformation of certain stable modular sheaves  on the generalized Kummer fourfold $K_2(A)$ associated to   an abelian surface $A$. 
By invariance under deformation, it suffices to prove that~\eqref{maryclaire} holds for $M=K_2(A)$. We recall the definition of the modular sheaves on  $K_2(A)$. Let $f\colon B\to A$ be a degree $2$ homomorphism of  abelian surfaces. By mapping a general $[Z] \in K_{2}(B)$ to $[f(Z)]\in K_{2}(A)$ one defines a rational map
$\rho\colon K_{2}(B) \dra K_{2}(A)$ whose indeterminacy locus is equal to
\begin{equation}\label{vuenne}
V(f):=\{[Z]\in K_2(B) \mid \ell(f(Z))<\ell(Z)=2\}.
\end{equation}
The  blow up $\nu\colon X\to K_2(B)$  of $V(f)$ resolves the indeterminacies of  $\rho$.  
Thus we have  
the commutative diagram 
\begin{equation}\label{scivolo}
\xymatrix{   & X \ar[dl]_{\nu}  \ar[dr]^{\wt{\rho}} &   \\ 
  K_2(B)  \ar@{-->}[rr]^{\rho} & & K_2(A) }
\end{equation}
For   a line bundle $\cL$ on $X$, we let 
$\cE(\cL):=\wt{\rho}_{*}(\cL)$. Now assume that $\cL=\nu^{*}(\ov{\cL})$, where $\ov{\cL}$ is a line bundle on $K_2(B)$ such that $c_1(\ov{\cL})=\mu_B(\omega_B)$, where $\mu_B\colon H^2(B)\to H^2(K_2(B))$ is the symmetrization map. We have
\begin{equation}
c_1(\cE(\cL))=2\mu_A(\omega_A)-\delta_2(A)
\end{equation}
where $\omega_A:=f_{*}(\omega_B)$. 
There are assumptions on $B$ and $\omega_B$ which guarantee that $\cE(\cL)$ is a stable rank $4$ vector bundle, that for a  general deformation of $K_2(A) $ which keeps the class of $2\mu_A(\omega_A)-\delta_2(A)$ of type $(1,1)$, the class  is ample of square $e=16a-6$ and divisibility $2$, that  $\cE(\cL)$ 
 extends to a (stable) vector bundle on such a general deformation, and that these vector bundles are stabilized by 
the group of automorphisms acting trivially on $H^2$, see Section 6 in~\cite{og:modonkum}. 
 In particular, among the relevant assumptions we have that $\omega_B$ is a primitive class and 
\begin{equation}\label{enaena}
\omega_B\cdot\omega_B=2a,\quad e=16a-6.
\end{equation}
 The commutative diagram in~\eqref{scivolo} gives an identification
\begin{equation}\label{lebatterie}
H^0(K_2(B),\ov{\cL}) \cong H^0(K_2(A),\cE(\cL)).
\end{equation}
Note that $\ov{\cL}$ is big and nef, hence $H^0(K_2(B),\ov{\cL}) $ is non trivial, in fact the formula in~\eqref{hiriro} gives that
\begin{equation}\label{vittoria}
3\cdot{a+2\choose 2}=h^0(K_2(B),\ov{\cL}) =h^0(K_2(A),\cE(\cL)).
\end{equation}
The isomorphism $B[3]\overset{\sim}{\lra} A[3]$  induced by the  homomorphism $f\colon B\to A$ of degree $2$ defines an isomorphism
\begin{equation}
T(K_2(B))=B[3]\overset{\sim}{\lra} A[3]=T(K_2(A))
\end{equation}
 which commutes with the actions of the two groups on the two sides of the isomorphism in~\eqref{lebatterie}. Since we may read off the commutator pairing from the action of the  theta group on  $H^0(K_2(A),\cE(\cL))$ (because it is non trivial), Theorem~\ref{thm:compairkum} gives that the commutator pairing is non degenerate if $a$ is not divisible by $3$, and that $\wh{T}(M)/\im(E_{\cF})$ is isomorphic to $(\ZZ/(3))^2$ if $a$ is  divisible by $3$. By  the last equality in~\eqref{enaena}, this proves the result. 
\end{proof}
\subsection{An example}
\setcounter{equation}{0}
We briefly discuss the first instance of non degenerate commutator pairing of Theorem~\ref{thm:giulia}. Let $(M,h)$ be a general polarized $4$ dimensional HK variety of Kummer type with $q_M(h)=10$ and  divisibility of $h$ equal to $2$. Let 
$\cF$ be a stable rank $4$ vector bundle on $M$ as in  Theorem~\ref{thm:giulia}. Then the commutator pairing $e^{\cF}$ is non degenerate, and hence $H^0(M,\cF)$, which has dimension $9$ by~\eqref{vittoria}, is the  Schr\"odinger representation of $T(M)\cong (\ZZ/(3))^4$. Let
\begin{equation}
\PP(\cF^{\vee})\overset{\phi_{\cF}}{\dra} \PP(H^0(M,\cF)^{\vee})
\end{equation}
be the natural map. If the general fiber of $\phi_{\cF}$ is finite, then the image of $\phi_{\cF}$ is a hypersurface invariant for the Schr\"odinger representation.

 \bibliography{ref-eqtns-of-kumm-type}
 \end{document}